\newcommand{\Op}{\ensuremath{\mathcal{L}}}
\newcommand{\Hyp}{\ensuremath{\mathbb{H}}}
\newcommand{\R}{\ensuremath{\mathbb{R}}}
\newcommand{\ds}{\displaystyle}
  \newcommand{\PP}{\mathbb{P}}   \newcommand{\EE}{\mathbb{E}}   \newcommand{\RR}{\mathbb{R}}  \newcommand{\bbS}{\mathbb{S}}  \newcommand{\bbL}{\mathbb{L}}  \newcommand{\bbX}{\mathbb{X}}   \newcommand{\bbH}{\mathbb{H}} 
    \newcommand{\be}{{\bf{e}}}  \newcommand{\bfX}{{\bf{X}}}
\newcommand{\mcF}{{\mathcal{F}}}  \newcommand{\mcC}{{\mathcal{C}}}   \newcommand{\mcV}{{\mathcal{V}}}   \newcommand{\mcM}{\mathcal{M}}
\newcommand{\ep}{\epsilon}
\newcommand{\ssk}{\smallskip}
\newtheorem{theo}{Theorem}[subsection]
\newtheorem{prop}[theo]{Proposition}
\newtheorem{lemma}[theo]{Lemma}
\newtheorem{defi}[theo]{Definition}
\newtheorem{coro}[theo]{Corollary}
\newtheorem{exe}[theo]{Example}
\newtheorem{remark}[theo]{Remark}
\numberwithin{equation}{section}
\title{Kinetic Brownian motion on Riemannian manifolds}
\author{J. Angst}
\address{IRMAR, 263 Avenue du General Leclerc, 35042 RENNES, France}
\email{jurgen.angst@univ-rennes1.fr}
\urladdr{http://perso.univ-rennes1.fr/jurgen.angst/}
\author{I. Bailleul}
\address{IRMAR, 263 Avenue du General Leclerc, 35042 RENNES, France}
\email{ismael.bailleul@univ-rennes1.fr}
\urladdr{http://perso.univ-rennes1.fr/ismael.bailleul/}
\author{C. Tardif}
\address{LPMA, 4, Place Jussieu	 avenue de France, 75005 PARIS, France}
\email{camille.tardif@upmc.fr}
\urladdr{http://www.proba.jussieu.fr/pageperso/tardif/}
\thanks{The research of the second author was partially supported by the program ANR ``Retour Post-Doctorants", under the contract ANR 11-PDOC-0025. The second author thanks the U.B.O. for their hospitality.
The authors benefit from the support of Lebesgue center, ANR Labex LEBESGUE}
\keywords{Diffusion processes, finite speed propagation, Riemannian manifolds, homogenization, rough paths theory, Poisson boundary}
\begin{document}

\begin{abstract}
We consider in this work a one parameter family of hypoelliptic diffusion processes on the unit tangent bundle $T^1 \mathcal M$ of a Riemannian manifold $(\mathcal M,g)$, collectively called kinetic Brownian motions, that are random perturbations of the geodesic flow, with a parameter $\sigma$ quantifying the size of the noise. Projection on $\mcM$ of these processes provides random $C^1$ paths in $\mcM$. We show, both qualitively and quantitatively, that the laws of these $\mcM$-valued paths provide an interpolation between geodesic and Brownian motions. This qualitative description of kinetic Brownian motion as the parameter $\sigma$ varies is complemented by a thourough study of its long time asymptotic behaviour on rotationally invariant manifolds, when $\sigma$ is fixed, as we are able to give a complete description of its Poisson boundary in geometric terms.
\end{abstract}

\maketitle

\tableofcontents
\vfill

\pagebreak

\section{Introduction}
\label{SectionIntro}

\subsection{Motivations and related works}
\label{SubsectionIntro}

We introduce in this work a one parameter family of diffusion processes that model physical phenomena with a finite speed of propagation, collectively called \textbf{kinetic Brownian motion}. The need for such models in applied sciences is real, and ranges from molecular biology, to industrial laser applications, see for instance the works \cite{debb, GrothausStilgenbauer1,GrothausStilgenbauer2} and the references therein. As a first step in this direction, we consider here what may be one of the simplest example of such a process and provide a detailed study of its behaviour in a fairly general geometric setting. In the Euclidean space $\R^d$, kinetic Brownian motion with parameter $\sigma$, is simply described as a $C^1$ random path $(x_t)_{t \geq 0}$ with Brownian velocity on the unit sphere, run at speed $\sigma^2$, so
\begin{equation}
\label{EqKbmRd}
\frac{d x_t}{dt} = \dot x_t,\quad\quad \dot x_t = W_{\sigma^2 t},
\end{equation}
for some Brownian motion $W$ on the unit sphere of $\R^d$.

\begin{figure}[ht]
\label{fig.first}
\begin{center}\includegraphics[scale=.35]{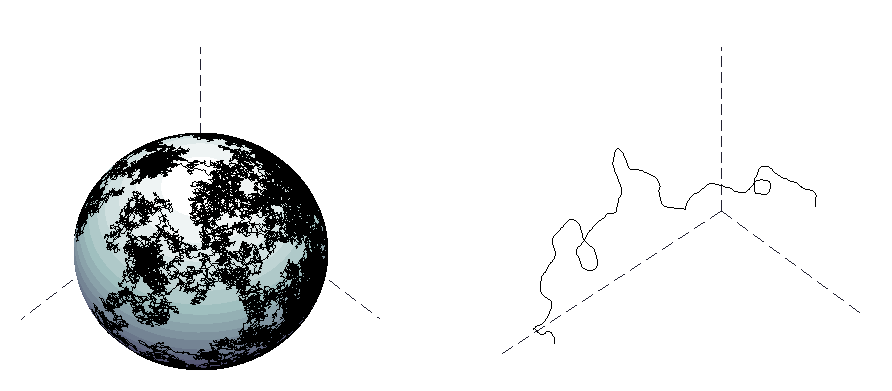}\end{center}
\caption{Kinetic Brownian motion in Euclidean space, velocity on the sphere on the left and the corresponding position on the right.}
\end{figure}

In contrast with Langevin process, whose $\R^d$-valued part can go arbitrarily far in an arbitrarily small amount of time, kinetic Brownian motion provides a bona fide model of random process with finite speed. 
Its definition on a Riemannian manifold follows the intuition provided by its $\R^d$ version, and can be obtained by rolling on $\mcM$ without slipping its Euclidean counterpart. Figure 2 
below illustrates the dynamics of kinetic Brownian motion on the torus, as time goes on.

\begin{figure}[ht]
\label{fig.torus}
\hspace{1cm}\includegraphics[scale=.45]{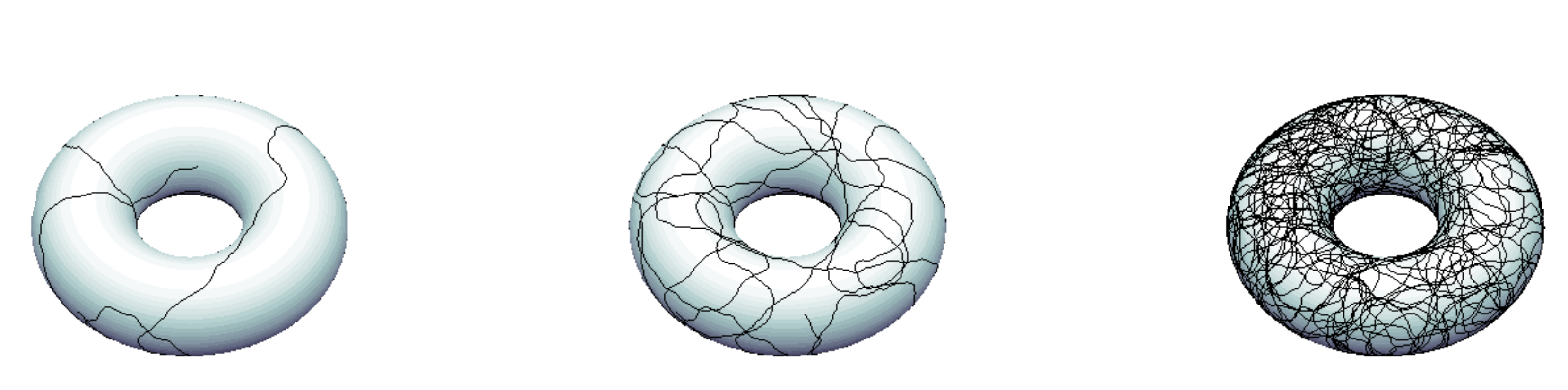}
\caption{Simulations of a the kinetic Brownian motion on the torus over different time intervals.}
\end{figure}

\medskip

We devote most of our efforts in this work in relating the large noise and large time behaviour of the process to the geometry of the manifold. On the one hand, we show that kinetic Brownian motion interpolates between geodesic and Brownian motions, as $\sigma$ ranges from $0$ to $\infty$, leading to a kind of homogenization.  Our use of rough paths theory for proving that fact may be of independent interest. We first prove the interpolation property in the model space $\R^d$, and strengthen the associated convergence results into some rough paths convergence results. The twist here is that once the latter result is proved, the fact that kinetic Brownian motion can be constructed from the rough path lift of kinetic Brownian motion in $\R^d$ by solving a rough differential equation, together with the continuity of the It\^o map, in rough path topology, provides a clean justification of the homogenization phenomenon.

\smallskip

This result strongly echoes Bismut's corresponding result for his hypoelliptic Laplacian \cite{BismutHypoellipticLaplacian} which, roughly speaking, corresponds in its simplest features to replacing the Brownian velocity on the sphere by a velocity process given by an Ornstein-Uhlenbeck process. As a matter of fact, our method for proving the above mentionned homogenization result can also be used to recover the corresponding result for Bismut's hypoelliptic diffusion.

\smallskip

On the other hand, we are able to give a complete description of the Poisson boundary of kinetic Brownian motion when the underlying Riemannian manifold is sufficiently symmetric and $\sigma$ is fixed. This is far from obvious as kinetic Brownian motion is a hypoelliptic diffusion which is non-subelliptic. We take advantage in this task of the powerful \textit{d\'evissage method} that was  introduced recently in \cite{Devissage} as a tool for the analysis of the Poisson boundaries of Markov processes on manifolds. Its typical range of application involves a diffusion $(z_t)_{t\geq 0}$ that admits a subdiffusion $(x_t)_{t\geq  0}$ whose Poisson boundary is known. If the remaining piece $y_t$ of $z_t = (x_t,y_t)$ converges to some random variable $y_\infty$, the d\'evissage method provides conditions that garantee that the invariant sigma field of $(z_t)_{t\geq 0}$ will be generated by $y_\infty$ together with the invariant sigma field of $(x_t)_{t \geq 0}$, see the end of Section \ref{sec.kasymp} where the main results of \cite{Devissage} are recalled. In the present situation, and somewhat like Brownian motion on model spaces, the Poisson boundary of kinetic Brownian motion is described by the asymptotic direction in which the process goes to infinity. It is remarkable, however, that depending on the geometry, kinetic Brownian motion may have a trivial Poisson boundary while Brownian motion will have a non-trivial Poisson boundary.

\smallskip

Kinetic Brownian motion is the Riemannian analogue of a class of diffusion processes on Lorentzian manifolds that was introduced by Franchi and Le Jan in \cite{flj}, as a generalization to a curved setting of a process introduced by Dudley \cite{dudley1} in Minkowski spacetime. These processes model the motion in spacetime of a massive object subject to Brownian fluctuations of its velocity. Despite their formal similarities, the causal structure of spacetime makes relativistic diffusions very different from kinetic Brownian motion, as the reader will find out by reading the litterature on the subject, such as \cite{flj,FranchiLeJanCurvature, FranchiGodel, ismael, BailleulRaugi, BailleulIHP, AngstPoissonRW}, for instance. 

\smallskip

We have organized the article as follows. Kinetic Brownian motion on a given Riemannian manifold $(\mcM,g)$ is introduced formally in Section \ref{SectionDefnKbm} below. Like its elementary $\R^d$-valued version, it lives in the unit tangent bundle of $\mcM$, and it has almost surely an infinite lifetime if the manifold is geodesically complete. Section \ref{SectionInterpolation} is dedicated to proving that the manifold-valued part of a time rescaled version of kinetic Brownian motion converges weakly to Brownian motion as the intensity of the noise, quantified by $\sigma^2$, increases indefinitely, if the manifold is stochastically complete. This is the main content of Theorem \ref{ThmInterpolationGeodBm}, which is proved using rough paths theory. The necessary material on this subject is recalled, so the reader can follow the proof without preliminary knowledge about rough paths. Section \ref{SectionAsymptotics} provides a thourough description of the asymptotic behaviour of kinetic Brownian motion on a rotationnaly invariant manifold, through the identification of its Poisson boundary in a generic setting.

\smallskip

We collect here a number of \textbf{notations} that will be used throughout that work.

\ssk

\begin{itemize}
   \item We shall use Einstein's well-known summation convention and $(\mathcal M,g)$ will denote a $d$-dimensional oriented complete Riemannian manifold, whose unit tangent bundle and orthonormal frame bundle will be denoted respectively by $T^1 \mathcal M$ and $O \mathcal M$. We shall denote by $z = (x,e)$ a generic point of $O \mathcal M$, with $x\in\mathcal M$ and $e : \RR^d\rightarrow T_x\mathcal M$, an orthonormal frame of $T_x\mathcal M$; we write $\pi : O \mathcal M \rightarrow\mathcal M$ for the canonical projection map. Last, we shall denote by $\big(\ep_1,\dots,\ep_d\big)$ the canonical basis of $\RR^d$, with dual basis $\big(\ep_1^*,\dots,\ep_d^*\big)$. 

   \item Denote by $(V_i)_{2\leq i\leq d}$, the canonical vertical vector fields on $O\mathcal M$, associated with the Lie elements $v_i:=\ep_i\otimes\ep_1^*-\ep_1\otimes\ep_i^*$ of the orthonormal group of $\RR^d$. The Levi-Civita connection on $T \mathcal M$ defines a unique horizontal vector field $H_1$ on $O\mathcal M$ such that $(\pi_*H_1)({z}) = e(\ep_1)$, for all $z=(x,e)\in O \mathcal M$. The flow of this vector field is the natural lift to $O \mathcal M$ of the \textit{geodesic flow}. Taking local coordinates $x^i$ on $\mathcal M$ induces canonical coordinates on $O \mathcal M$ by writing 
$$
e_i := e(\ep_i) = \sum_{j=1}^d e_i^j\partial_{x^j}.
$$
Denoting by $\Gamma^k_{ij}$ the Christoffel symbol of the Levi-Civita connection associated with the above coordinates, the vector fields $V_i$ and $H_1$ have the following expressions in these local coordinates 
\[
\begin{array}{ll}
V_{i}(z) &=\ds{e_i^k \frac{\partial}{\partial e_1^k}  - e_1^k \frac{\partial}{\partial e_i^k}}, \quad\quad\quad\quad 2\leq i\leq d,
\\
H_1(z) &= \ds{e_1^i \frac{\partial}{\partial x_i} - \Gamma_{ij}^k(x)  e_1^i e_l^j \frac{\partial}{\partial e_l^k}}.
\end{array}
\]
\end{itemize}

\medskip

\subsection{Definition of kinetic Brownian motion}
\label{SectionDefnKbm}

As said above, kinetic Brownian motion on a $d$-dimensional oriented complete Riemannian manifold $(\mathcal M,g)$, is a diffusion with values in the unit tangent bundle $T^1 \mathcal M$ of $\mathcal M$. In the model setting of $\RR^d$, it takes values in $\RR^d\times\bbS^{d-1}$, and is described as a random $C^1$ path run at unit speed, with Brownian velocity, as described in Equation \eqref{EqKbmRd}. As in the classical construction of Eells-Elworthy-Malliavin of Brownian motion on $\mathcal M$, it will be convenient later to describe the dynamics of kinetic Brownian motion on a general Riemannian manifold $(\mathcal M,g)$ as the projection in $T^1 \mathcal M$ of a dynamics with values in the orthonormal frame bundle $O \mathcal M$ of $\mathcal M$, obtained by rolling without splitting kinetic Brownian motion in $\RR^d$. The following direct dynamical definition in terms of stochastic differential equation provides another description of the dynamics of kinetic Brownian motion which we adopt as a definition. The equivalence of the two point of views is shown in section \ref{SubsubsectionCartan}. In the sequel $\sigma$ will always stand for some non-negative constant which will quantify the strength of the noise in the dynamics of kinetic Brownian motion. 

\begin{defi}\label{def.kbm}
Given $z_0\in O \mathcal M$, the {\bf kinetic Brownian motion} with parameter $\sigma$, started from $z_0$, is the solution to the $O \mathcal M$-valued stochastic differential equation in Stratonovich form
\begin{equation}
\label{EqDefnKbm}
d{z}_t = H_1({z}_t)\,dt + \sigma V_i({z}_t)\,{\circ d}B^i_t,
\end{equation}
started from $z_0$. It is defined a priori up to its explosion time and has generator
$$
\Op_{\sigma} := H_1 + \frac{\sigma^2}{2}  \sum_{j=2}^{d} V_j^2. 
$$
\end{defi}

It is elementary to see that its canonical projection on $T^1 \mathcal M$ is a diffusion on its own, also called \emph{kinetic Brownian motion}. In the coordinate system $(x^i,\dot{x}^i):=(x^i, e_1^i)$ on $T^1\mathcal M$ induced by a local coordinate system on $\mcM$, kinetic Brownian motion satisfies the following stochastic differential equation in It\^o form
\begin{equation}
\label{eqn.system}
\left \lbrace \begin{array}{lll}
d x^i_t &=  \ds{\dot{x}_t^i \, dt},\\
d \dot{x}_t^i &= \ds{ - \Gamma_{jk}^i \, \dot{x}_t^j  \dot{x}_t^k \,dt + \sigma dM_t^i - \frac{\sigma^2}{2}(d-1) \dot{x}^i_t\,dt}.
\end{array}
\right.
\end{equation}
where $1\leq i\leq d$, and where $M_t$ is an $\RR^d$-valued local martingale with bracket
\[
d \langle M^i, M^j \rangle_t = \Big( g^{ij}(x_t) - \dot{x}_t^i   \dot{x}_t^j\Big)dt,
\]
for any $1\leq i,j\leq d$ and $g^{ij}$ stand for the inverse of the matrix of the metric in the coordinates used here.

\smallskip

The readers acquainted with the litterature on relativistic diffusions will recognize in equation \eqref{EqDefnKbm} the direct Riemannian analogue of the stochastic differential equation defining relativistic diffusions in a Lorentzian setting. Despite this formal similarity, the two families of processes have very different behaviours. As a trivial hint that the two situations may differ radically, note that the unit (upper half) sphere $\bbH^d$ in the model space $\R^{1,d}$ of Minkowski spacetime, is unbounded. As a result, there exists deterministic $\bbH^d$-valued paths $\dot x_s$ that explode in a finite time, giving birth to exploding $\R^{1,d}$-valued paths $x_s = x_0 + \int_0^s x_r\,dr$. The work \cite{BailleulExplosion} even gives some reasonnable geometric conditions ensuring the non-stochastic completeness of relativistic diffusions. No such phenomenon can happen in $\R^d$ or on a complete Riemannian manifold for a $\mcC^1$ path defined on a finite open interval, and run at unit speed.

\medskip

\begin{prop}[Non-explosion]
\label{ThrmNonExplosion}
Assume the Riemannian manifold $(\mathcal M,g)$ is complete. Then kinetic Brownian motion has almost surely an infinite lifetime.
\end{prop}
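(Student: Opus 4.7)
The plan is to exploit the fact that the process has finite (indeed, unit) speed on the base manifold, so that geodesic completeness forces the base path to live in a compact region up to any finite time, and the compactness of the fibers of $O\mathcal{M} \to \mathcal{M}$ then confines the full frame-valued process to a compact set.

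More precisely, I would argue as follows. Let $\tau$ denote the explosion time of $(z_t)$, and write $z_t=(x_t,e_t)$ with $x_t=\pi(z_t)$. From Definition \ref{def.kbm}, the drift along $\pi$ is $\pi_\ast H_1(z_t)=e_t(\ep_1)$, while the vertical vector fields $V_i$ are tangent to the fibers and therefore project to zero under $\pi_\ast$. Hence $x_t$ is a genuinely $C^1$-path with $\dot{x}_t=e_t(\ep_1)$, and since $e_t$ is an orthonormal frame, $|\dot{x}_t|_g=1$ for every $t<\tau$. It follows that for any $T>0$, and on the event $\{\tau>T\}$,
\[
d_g(x_t,x_0)\ \leq\ \int_0^t |\dot{x}_s|_g\,ds\ \leq\ T\qquad\text{for all }t\leq T.
\]

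Now use completeness. By the Hopf--Rinow theorem, the closed ball $\overline{B}_g(x_0,T)\subset\mathcal{M}$ is compact. Since the fiber of $\pi:O\mathcal{M}\to\mathcal{M}$ is a copy of the compact group $O(d)$, the preimage
\[
K_T\ :=\ \pi^{-1}\bigl(\overline{B}_g(x_0,T)\bigr)\ \subset\ O\mathcal{M}
\]
is compact. The previous bound shows that $z_t\in K_T$ for all $t\leq T\wedge\tau$.

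The conclusion then follows from the standard characterization of the explosion time of a Stratonovich SDE with smooth coefficients on a manifold: $\tau$ is almost surely equal to the first exit time from every exhausting sequence of compact sets, so $\{\tau\leq T\}\subset\{z_t\ \text{leaves }K_T\ \text{before }T\}$, which has probability zero by the preceding paragraph. Letting $T\to\infty$ yields $\tau=\infty$ almost surely. There is no serious obstacle here; the only thing to check is the elementary but crucial fact that the $V_i$ are $\pi$-vertical so that the base projection retains unit speed, which is immediate from the coordinate expressions recalled in the notations section.
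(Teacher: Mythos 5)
Your proof is correct, but it follows a genuinely different route from the paper's. The paper argues by contradiction and establishes the stronger fact that $z_t$ actually \emph{converges} in $O\mathcal M$ as $t\to\tau$ on $\{\tau<\infty\}$: unit speed plus completeness forces $x_t$ to converge (the path has finite length, hence is Cauchy), the horizontal lift $\overline z_t=(x_t,\overline e_t)$ of $x_\cdot$ then converges, and the actual frame is recovered as $\overline e_t = e_t h_t$ with $(h_t)$ solving a globally defined $SO(d)$-valued SDE, so $e_t$ converges too — contradicting the necessary escape to infinity at an explosion time. You instead only establish \emph{confinement}: unit speed plus Hopf--Rinow keeps $x_t$ in the compact ball $\overline B_g(x_0,T)$ up to time $T\wedge\tau$, properness of $\pi: O\mathcal M\to\mathcal M$ (compact $O(d)$ fibers) keeps $z_t$ in the compact set $K_T$, and the standard characterization of the explosion time of an SDE with smooth coefficients as the exit time from an exhausting sequence of compacts does the rest. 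Both arguments ultimately invoke the same background fact (a maximal solution cannot remain in, or converge inside, a compact set at its explosion time), and both hinge on the same two observations — that the $V_i$ are $\pi$-vertical so the base path has unit speed, and that completeness controls the base. Your version is shorter and avoids the horizontal lift and the auxiliary $SO(d)$-process entirely; the paper's version yields, as a by-product, the convergence of the full frame process at a hypothetical finite lifetime. Either is a complete proof of the proposition.
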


\smallskip

\begin{proof}
Denote by $\tau$ the lifetime of kinetic Brownian motion $z_t = (x_t,e_t)$, and assume, by contradiction, that $\tau$ is finite with positive probability. Since the $C^1$ path $(x_t)_{0\leq t<\tau}$ has unit speed, it would converge as $t$ tends to $\tau$, on the event $\{\tau<\infty\}$, as a consequence of the completeness assumption on $\mathcal M$. The horizontal lift $\big(\overline{z}_t\big)_{0\leq t<\tau}:= (x_t,\overline{e}_t)_{0\leq t<\tau}$ of $(x_t)_{0\leq t<\tau}$ in $O \mathcal M$ would converge as well. Write ${v}_i$ for the Lie element $\ep_i\otimes \ep_1^*-\ep_1\otimes \ep_i^*$ of the orthonormal group $SO(d)$ of $\RR^d$. We have then $\overline{e}_t =: e_t h_t$ where the process $(h_t) \in SO(d)$ satisfies the stochastic differential equation 
$$
dh_t = -{v}_i h_t\,{\circ d}B^i_t,
$$
in particular $(h_t)$ is well defined for all times $t\geq 0$. Consequently, both processes $(e_t)$ and $({z}_t)$ should converge as $t$ tends to $\tau$, contradicting the necessary explosion of ${z}_t$ .
\end{proof}

\medskip

From now on we shall assume that the Riemannian manifold $(\mathcal M,g)$ is \textit{complete}, and turn in the next section to the study of kinetic Brownian motion as a function of $\sigma$.


\section{From geodesics to Brownian paths}
\label{SectionInterpolation}

Let emphasize the dependence of kinetic Brownian motion on the parameter $\sigma$ by denoting it  ${z}_t^{\sigma} = \big(x^\sigma_t,e^\sigma_t\big)\in O\mcM$. We show in this section that the family of laws of $x^\sigma_{\cdot}$ provides a kind of interpolation between geodesic and Brownian motions, as $\sigma$ ranges from zero to infinity, as expressed in Theorem \ref{ThmInterpolationGeodBm} below and illustrated in Fig. \ref{fig.interpol} below when the underlying manifold is the 2-dimensional flat torus.

\begin{figure}[ht]
\hspace{1.1cm}\includegraphics[scale=0.2]{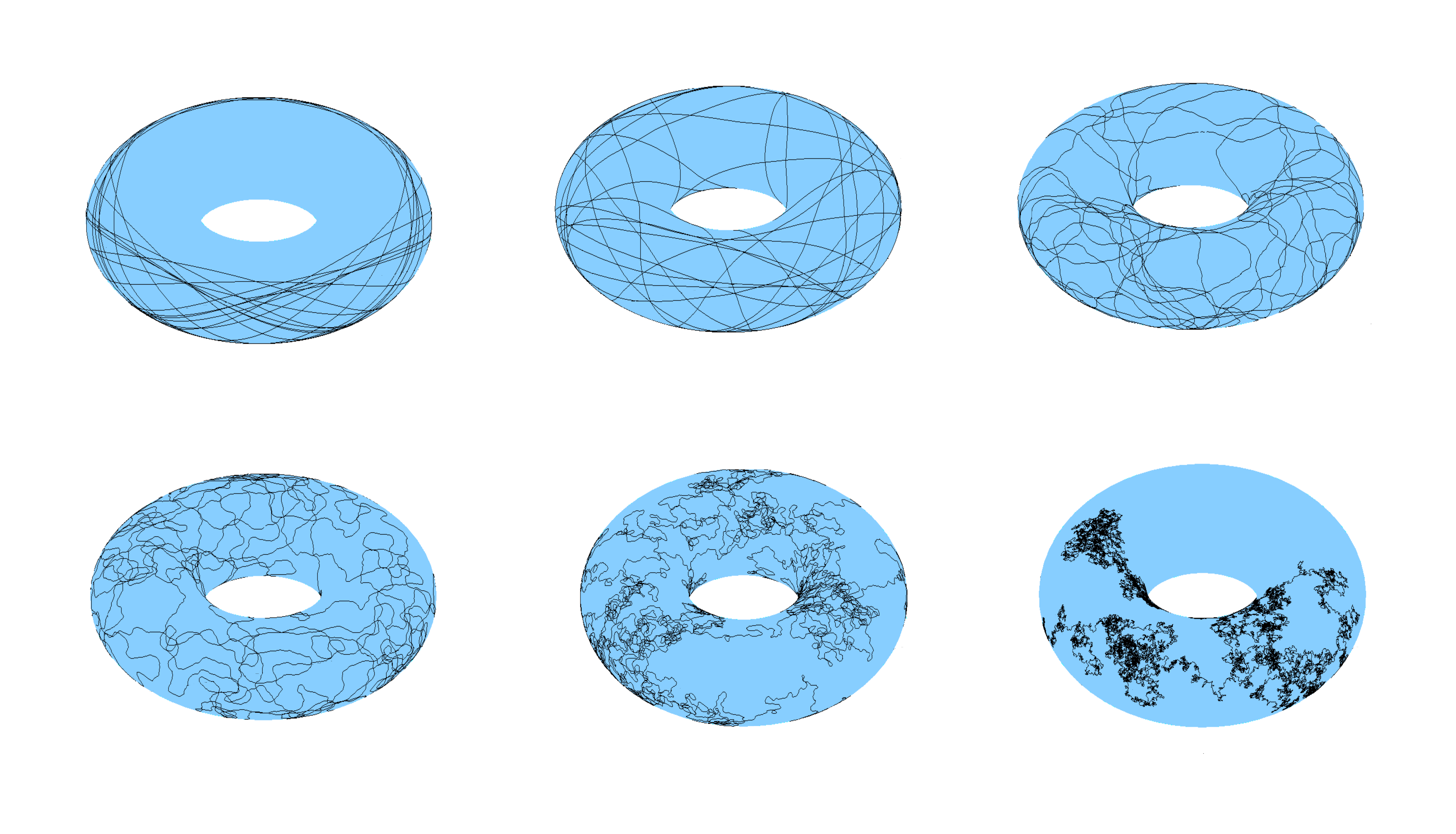}
\label{fig.interpol}
\caption{Kinetic Brownian motion on the flat torus for $\sigma = 10^{-2}, 10^{-1}, 1, 2 ,4, 10$.}
\end{figure}

\subsection{Statement of the results}
\label{SubsectionResultsInterpolation}

To fix the setting, add a cemetary point $\partial$ to $\mathcal M $, and endow the union ${\mathcal M }\sqcup\{\partial\}$ with its usual one-point compactification topology. That being done, denote by $\Omega_0$ the set of continuous paths $\gamma : [0,1]\rightarrow {\mathcal M }\sqcup\{\partial\}$, that start at some reference point $x_0$ and that stay at point $\partial$ if they exit the manifold $\mathcal M$. Let $\mcF:= \bigvee_{t \in [0,1]} \mcF_t$ where $(\mcF_t)_{0\leq t\leq 1}$ stands for the filtration generated by the canonical coordinate process. Denote by $B_R$ the geodesic open ball with center $x_0$  and radius $R$, for any $R>0$. The first exit time from $B_R$ is denoted by $\tau_R$, and used to define a measurable map 
\[
T_R : \Omega_0\rightarrow C\big([0,1], \bar{B}_R\big),
\]
which associates to any path $(\gamma_t)_{0 \leq t \leq 1} \in \Omega_0$ the path which coincides with $\gamma$ on the time interval $\big[0,\tau_R\big]$, and which is constant, equal to $\gamma_{\tau_R}$, on the time interval $\big[\tau_R,1\big]$. The following definition then provides a convenient setting for dealing with sequences of random process whose limit may explode.

\begin{defi}
A sequence $({\PP}_n)_{n\geq 0}$ of probability measures on $\big(\Omega_0,\mcF\big)$ is said to converge locally weakly to some limit probability $\PP$ on $\big(\Omega_0,\mcF\big)$ if the sequence $\PP_n\circ{ T}_R^{-1}$  of probability measures on $C([0,1], \bar{B}_R)$  converges weakly to $\PP\circ{ T}_R^{-1}$, for every $R>0$.
\end{defi}

Equipped with this definition, we can give a precise sense to the above interpolation between geodesic and Brownian motions provided by kinetic Brownian motion.

\begin{theo}[Interpolation]
\label{ThmInterpolationGeodBm}
Assume the Riemannian manifold $(\mcM,g) $ is complete. Given $z_0=\big(x_0,e_0\big)\in O \mathcal M $ we have the two following asymptotics behaviours. 
\begin{itemize}
\item The law of the \emph{rescaled} process $\big(x^\sigma_{\sigma^2t}\big)_{0\leq t\leq 1}$ converges locally weakly under $\PP_{z_0}$ to Brownian motion on $\mathcal M $, run at speed $\frac{4}{d(d-1)}$ over the time interval $[0,1]$, as $\sigma$ goes to infinity. \vspace{0.2cm}

\item The law of the \emph{non-rescaled} process $\big(x^\sigma_t\big)_{0\leq t\leq 1}$ converges locally weakly under $\PP_{z_0}$, as $\sigma$ goes to zero, to a Dirac mass on the geodesic started from $x_0$ in the direction of the first vector of the basis $e_0$.
\end{itemize}
\end{theo}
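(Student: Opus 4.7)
The plan is to transport both limits to the Euclidean model via the Cartan rolling construction and then treat the driving noise pathwise in rough path topology. On the frame bundle $O\mcM$, equation \eqref{EqDefnKbm} is precisely the horizontal development of the $\RR^d$-valued kinetic Brownian motion
\[
y^\sigma_t := \int_0^t W_{\sigma^2 s}\,ds,
\]
where $W$ is unit-speed spherical Brownian motion on $\bbS^{d-1}$ started at $\ep_1$. The solution $z^\sigma$ can be written as the output of a rough differential equation driven by the canonical geometric rough path lift $\mathbf y^\sigma=(y^\sigma,\mathbb Y^\sigma)$ of the $C^1$ path $y^\sigma$, with vector fields built out of $H_1$ and the $V_i$. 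Since we only need locally weak convergence, we may stop everything at the first exit time $\tau_R$ of a geodesic ball $B_R$ and replace the vector fields by compactly supported extensions, eliminating any non-explosion issue for the rough solution. Both halves of the theorem then reduce to a rough path convergence statement for $\mathbf y^\sigma$ on $\RR^d$, fed into the continuity of the It\^o-Lyons solution map.

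For the small $\sigma$ regime, $W_{\sigma^2 s}\to W_0=\ep_1$ uniformly on $[0,1]$ almost surely, hence $y^\sigma_t\to t\ep_1$ uniformly and its Riemann-Stieltjes iterated integrals converge deterministically to those of $t\ep_1$ in any $\alpha$-H\"older rough path topology. Continuity of the It\^o-Lyons map then yields convergence of $z^\sigma$ to the integral curve of $H_1$, that is, to the horizontal lift of the geodesic issued from $x_0$ with initial velocity $e_0(\ep_1)$, whence the second assertion.

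For the large $\sigma$ regime, set $\tilde y^\sigma_t:=y^\sigma_{\sigma^2 t}=\sigma^{-2}\int_0^{\sigma^4 t}W_u\,du$. The coordinate functions $\ep_j^*|_{\bbS^{d-1}}$ are first-order spherical harmonics with Laplace eigenvalue $-(d-1)$, so inverting the spherical generator $\tfrac12\Delta_{\bbS^{d-1}}$ on these functions yields the limiting covariance
\[
\lim_{\sigma\to\infty}\E\!\left[\tilde y^{\sigma,i}_s\,\tilde y^{\sigma,j}_t\right]=\frac{4}{d(d-1)}(s\wedge t)\,\delta_{ij},
\]
and a standard ergodic central limit theorem for additive functionals of spherical Brownian motion gives weak convergence of $\tilde y^\sigma$ to a Brownian motion $\beta$ on $\RR^d$ with this covariance. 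The crux of the proof is to upgrade this to convergence of the full geometric rough path $\tilde{\mathbf y}^\sigma$ to the Stratonovich lift of $\beta$ in an $\alpha$-H\"older rough path topology with $\alpha<1/2$: one must establish moment bounds of the form
\[
\E\bigl|\tilde y^\sigma_t-\tilde y^\sigma_s\bigr|^{2p}\lesssim |t-s|^{p},\qquad \E\bigl|\tilde{\mathbb Y}^\sigma_{s,t}\bigr|^{p}\lesssim |t-s|^{p},
\]
uniform in $\sigma$, to obtain tightness via Kolmogorov-Chentsov, and identify the finite-dimensional limits of the L\'evy area through a second ergodic central limit theorem, this time for a quadratic functional of the spherical Brownian motion. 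Once this rough path convergence is in place, the continuity of the It\^o-Lyons map applied to the stopped rough differential equation transfers the convergence to $O\mcM$, and projection on $\mcM$ identifies the limit with the Eells-Elworthy-Malliavin construction of Brownian motion on $\mcM$ run at speed $\tfrac{4}{d(d-1)}$. The principal technical obstacle is exactly this second-level convergence: whereas the first-level CLT only exploits the spectral gap of the spherical Laplacian on degree-one harmonics, the L\'evy area couples the motion with itself and demands quantitative mixing estimates for quadratic observables, uniform in $\sigma$ and sharp enough to feed a Kolmogorov-type criterion in rough path topology.
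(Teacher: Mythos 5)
Your architecture coincides with the paper's: prove a Euclidean CLT for $\sigma^{-2}\int_0^{\sigma^4 t}W_u\,du$, upgrade it to weak convergence of the geometric rough path lift, and push the result through Cartan's development by continuity of the It\^o--Lyons map (stopping in a geodesic ball to get local weak convergence). Your treatment of the small-$\sigma$ regime and your computation of the limiting covariance $\tfrac{4}{d(d-1)}$ via the degree-one spherical harmonics are both correct, and the paper does essentially the same transfer argument in its Sections \ref{SubsubsectionCartan}--\ref{SubsubsectionRDEs}.

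The gap is that the crux of the argument is announced rather than carried out. You write that ``one must establish'' the uniform-in-$\sigma$ moment bounds on the increments and on the second-level process, and that one must ``identify the finite-dimensional limits of the L\'evy area,'' but neither step is done, and neither is routine. For the moment bounds the paper exploits the explicit decomposition $dy_t=-\tfrac{d-1}{2}y_t\,dt+dN_t$ to integrate by parts inside $\int(\int y)\otimes y$ and then runs Burkholder--Davis--Gundy repeatedly; without some such device the estimate $\E|\tilde{\mathbb Y}^\sigma_{s,t}|^p\lesssim|t-s|^p$ uniformly in $\sigma$ does not follow from the first-level CLT. More seriously, the identification of the limiting area cannot be taken for granted: the paper's own remark on the Friz--Gassiat--Lyons analysis of physical Brownian motion in a magnetic field shows that a velocity process converging to Brownian motion can produce a rough path limit whose area is \emph{not} the Stratonovich one. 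The paper settles this by splitting the antisymmetric part of $\mathbb X^\sigma$ into boundary terms that vanish in $\bbL^2$ (because $y$ lives on the unit sphere) plus the martingale term $\int(M^\sigma\otimes dM^\sigma-dM^\sigma\otimes M^\sigma)$, and then invokes the Jacod--Shiryaev theorem on weak convergence of stochastic integrals under the uniform tightness condition --- not a second ergodic CLT for quadratic functionals. Your proposed route through mixing estimates for quadratic observables might be made to work, but as stated it neither produces the required bounds nor explains why the anomalous area contribution vanishes here, which is precisely the point that needs proof.
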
  

Implicit in the above statement concerning the small noise asymptotics is the fact that the geodesic curve is only run over the time interval $[0,1]$; no stochastic completeness assumption is made above. The proof of the second item in the interpolation theorem is trivial, since the generator of the process then converges to the generator of the geodesic flow as it is clear from the Definition \ref{def.kbm} of the process. We shall first prove the result stated in the first item by elementary means in the model Euclidean case, in Section \ref{SubsectionEuclideanInterpolation}. Using the tools of rough paths analysis, we shall see in Section \ref{SubsectionRPInterpolation} that elementary moment estimates allow a strengthening of the weak convergence result of Section \ref{SubsectionEuclideanInterpolation} into the weak convergence of the rough path lift of the Euclidean kinetic Brownian motion to the (Stratonovich) Brownian rough path. To link kinetic Brownian motion in Euclidean space to its Riemannian analogue, we use the fact that the latter can be constructed by rolling on $\mcM$ without slipping the former, using Cartan's development map. This means, from a practical point of view, that one can construct the $\mcM$-valued part of kinetic Brownian motion as the solution of an $\mcM$-valued controlled differential equation equation in which the Euclidean kinetic Brownian motion plays the role of the control. This key fact will enable us to use the continuity of the It\^o-Lyons map associated with Cartan's development map, and transfer in Section \ref{SubsectionGeodesicToBrownian} the weak convergence result proved for the Euclidean kinetic Brownian motion to the curved setting of \emph{any} complete Riemannian manifold. 

\smallskip
Note that the above theorem only involves local weak convergence; it can be strengthened under a very mild and essentially optimal natural assumption.
 
\begin{coro}
\label{cocorico}
If the manifold $(\mcM,g)$ is complete and stochastically complete then the \emph{rescaled} process $\big(x^\sigma_{\sigma^2t}\big)_{0\leq t\leq 1}$ converges in law under $\PP_{z_0}$, as $\sigma$ goes to infinity, to Brownian motion run at speed $\frac{4}{d(d-1)}$ over the time interval $[0,1]$. 
\end{coro}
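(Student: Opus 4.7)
The plan is to upgrade the local weak convergence provided by Theorem \ref{ThmInterpolationGeodBm} to full weak convergence on the path space $\Omega_0$, by exploiting stochastic completeness to rule out any escape of mass to infinity in the limit. Denote by $\PP^\sigma$ the law on $\Omega_0$ of $\big(x^\sigma_{\sigma^2 t}\big)_{0\leq t\leq 1}$ and by $\PP$ the law of Brownian motion on $\mcM$ run at speed $4/(d(d-1))$ over $[0,1]$. Stochastic completeness guarantees that $\PP$ is supported on $C\big([0,1],\mcM\big)$, so that in particular $\PP(\tau_R\leq 1)\to 0$ as $R\to\infty$.

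The first substantial step is the one-sided tightness estimate
\[
\limsup_{\sigma\to\infty}\PP^\sigma(\tau_R\leq 1)\leq \PP(\tau_R\leq 1),\quad R>0.
\]
The event $\{\tau_R\leq 1\}$ is measurable with respect to the stopped path $T_R(\gamma)$ and corresponds to the subset of $C\big([0,1],\bar B_R\big)$ of paths meeting $\partial B_R$. Compactness of $[0,1]$ combined with uniform convergence makes this subset closed in $C\big([0,1],\bar B_R\big)$, so the portmanteau theorem, applied to the weak convergence $\PP^\sigma\circ T_R^{-1}\to\PP\circ T_R^{-1}$ asserted by Theorem \ref{ThmInterpolationGeodBm}, yields the inequality.

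With this in hand one runs a routine truncation. Given a bounded continuous test function $F:\Omega_0\to\RR$ and $\eps>0$, let $g_R$ denote the restriction of $F$ to $C\big([0,1],\bar B_R\big)$, which remains bounded continuous there. Since $F(\gamma)=g_R\big(T_R(\gamma)\big)$ whenever $\tau_R(\gamma)>1$, one obtains
\[
\big|\EE^\sigma[F]-\EE^\sigma[g_R\circ T_R]\big|\leq 2\|F\|_\infty\PP^\sigma(\tau_R\leq 1),
\]
together with the analogous bound under $\PP$. Choose $R$ so large that $\PP(\tau_R\leq 1)<\eps$; the tightness estimate then forces $\PP^\sigma(\tau_R\leq 1)<2\eps$ for all $\sigma$ sufficiently large, while local weak convergence gives $\EE^\sigma[g_R\circ T_R]\to\EE[g_R\circ T_R]$. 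Assembling these three facts produces $\limsup_\sigma\big|\EE^\sigma[F]-\EE[F]\big|\leq 6\|F\|_\infty\eps$, and letting $\eps\downarrow 0$ concludes.

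The only nontrivial ingredient is the tightness estimate of the second step: it is precisely the role of stochastic completeness to ensure that the limit $\PP$ gives no mass to paths escaping to infinity in finite time, and the role of the portmanteau theorem, applied to a closed event in $C\big([0,1],\bar B_R\big)$, to transfer this control uniformly along the pre-limit sequence. Everything else reduces to the standard truncation described in the third paragraph.
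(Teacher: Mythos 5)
Your proof is correct, and it follows the same overall strategy as the paper: truncate via the stopping maps $T_R$, use stochastic completeness to make $\PP(\tau_R\leq 1)$ small for large $R$, transfer that control to the pre-limit laws via the local weak convergence, and conclude by a routine approximation of test functions. The one place where your execution genuinely differs is the transfer step. The paper proves an abstract lemma working with Borel sets $A$ satisfying $\PP(\partial A)=0$ and, crucially, needs the additional hypothesis $\PP(\partial\mcC_R)=0$ for every $R$ in order to pass $\PP_n(\mcC_R)\to\PP(\mcC_R)$ to the limit; this boundary-nullity condition is left unverified for the Brownian limit when the lemma is applied. You instead invoke the one-sided portmanteau inequality $\limsup_\sigma\PP^\sigma\circ T_R^{-1}(F)\leq\PP\circ T_R^{-1}(F)$ for the \emph{closed} set $F$ of paths in $C([0,1],\bar B_R)$ meeting $\partial B_R$, which requires no such boundary condition and so sidesteps that hypothesis entirely. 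This is a modest but real simplification; the trade-off is that the paper's formulation is stated as a reusable lemma about arbitrary sequences of measures, whereas yours is tailored to the convergence at hand. Both arguments are complete and correct.
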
 

\smallskip

X.-M. Li proved recently in \emph{\cite{GeodesicXMLi}} an interpolation theorem similar to Corollary \ref{cocorico}, under stronger geometric assumptions on the base manifold $\mcM$, requiring a positive injetivity radius and a control on the norm of the Hessian of the distance function on some geodesic ball. Her proof rests on a formulation of the weak convergence result in terms of a martingale problem, builds on ideas from homogenization theory, and uses tightness techniques. It is likely that the very robust nature of our proof, based on the rough paths machinery, offers a convenient setting for proving more general homogenization results at a low cost. As a basic illustration, notice for example that the proof below works verbatim with the Levi-Civita connection replaced by any other affine metric preserving connection $\textrm{H} : T\mcM\rightarrow TO\mcM$.  The limit process in $O\mcM$ is not in that case the lift to $O\mcM$ of a Brownian motion on $\mcM$ anymore, but it is still described as the solution to the Stratonovich differential equation
$$
d\be_t = \textrm{H}(\be_t)\,{\circ d}B_t,
$$
where $B$ is an $\R^d$-valued Brownian motion.
\smallskip

Note that a slightly more general family of diffusions on the frame bundle than those given by equation \eqref{EqDefnKbm} is considered in \emph{\cite{GeodesicXMLi}}, where in addition to the Brownian noise in the canonical vertical directions, scaled by a factor $\sigma$, she also considers a vertical constant drift independent of $\sigma$.  It is elementary to adapt our method to this setting.  

\smallskip

\begin{remark}
The idea of using rough paths theory for proving elementary homogenization results as in theorem \ref{ThmInterpolationGeodBm} was first tested in the work \emph{\cite{FrizGassiatLyons}} of Friz, Gassiat and Lyons, in their study of the so-called \emph{physical Brownian motion in a magnetic field.} That random process is described as a $C^1$ path $(x_t)_{0\leq t\leq 1}$ in $\R^d$ modeling the motion of an object of mass $m$, with momentum $p_\cdot = m\dot x_\cdot$, subject to a damping force and a magnetic field. Its momentum satisfies a stochastic differential equation of Ornstein-Uhlenbeck form
$$
dp_t = -\frac{1}{m}\,Mp_tdt + dB_t,
$$
for some matrix $M$ whose eigenvalues all have positive real parts, and $B$ is a $d$-dimensional Brownian motion. While the process $(Mx_t)_{0\leq t \leq 1}$ is easily seen to converge to a Brownian motion, its rough path lift is shown to converge in a rough paths sense in $\bbL^q$, for any $q\geq 2$, to a random rough path \emph{different from} the Brownian rough path. 
\end{remark}

\medskip

The proof of the interpolation theorem \ref{ThmInterpolationGeodBm} and Corollary \ref{cocorico} is split into three steps, performed in the next three subsections. We prove the Euclidean version of Corollary \ref{cocorico} in Section \ref{SubsectionEuclideanInterpolation}, and strengthen that weak convergence result in $\left(C\left([0,1],\R^d\right),\|\cdot\|_\infty\right)$ into a weak convergence result in rough paths topology of the rough paths lift of Euclidean kinetic Brownian motion. This is done in Section \ref{SubsectionRPInterpolation} by using some general compactness criterion on distributions in rough paths space. The point here is that kinetic Brownian motion on any complete Riemannian manifold can be constructed from its Euclidean analogue by solving a controlled differential equation in which the control is the Euclidean kinetic Brownian motion. One can then use Lyons' universal limit theorem, on the continuity of the It\^o-Lyons map, to transfer the weak convergence result of the rough kinetic Brownian motion in $\R^d$ to its Riemannian analogue; this is explained in Section \ref{SubsectionGeodesicToBrownian}, where Corollary \ref{cocorico} is also proved.

\subsection{Proof of the interpolation result in the Euclidean setting}
\label{SubsectionEuclideanInterpolation}

When the underlying manifold $\mathcal M $ is the Euclidean space $\mathbb R^d$, the state space of kinetic Brownian motion becomes $\RR^d\times\bbS^{d-1}$, with coordinates $(x,\dot{x})=(x^i,\dot{x}^i)_{1 \leq i \leq d} \in\RR^d\times\bbS^{d-1}$, inherited from the canonical coordinates on $\R^d\times\R^d$. \if{Its dynamics is simply described by saying that its velocity part $(\dot{x}_t)_{t \geq 0}$ is a Brownian motion on $\bbS^{d-1}$ run at speed $\sigma^2$, while $x_t = x_0 + \int_0^t \dot{x}_s\,ds$.}\fi System \eqref{eqn.system} describing the dynamics of kinetic Brownian motion in a general local coordinate system takes in the present setting the simple form
\begin{equation}\label{eqn.system.eucli}
\left \lbrace \begin{array}{lll}
d x_t^i &=  \ds{\dot{x}_t^i dt},  \\
d \dot{x}^i_t &= \ds{ - \sigma^2 \frac{d-1}{2} \dot{x}^i_tdt + \sigma \sum_{j=1}^d \big(  \delta^{ij} - \dot{x}^i_t  \dot{x}^j_t \big) d W^j_t},
\end{array}
\right.
\end{equation}
for $1\leq i\leq d$, with $W=\big(W^1, \dots, W^d\big)$ a standard $\RR^d$-valued Brownian motion.

\smallskip

\begin{prop}
\label{PropWeakConvergenceBM} 
Given $(x_0,\dot{x}_0) \in\RR^d\times\bbS^{d-1}$, we have the following two asymptotic regimes, in terms of $\sigma$.
\begin{enumerate}
   \item The \emph{non-rescaled} process $(x^\sigma_t)_{0\leq t \leq 1}$ converges weakly to the Dirac mass on the geodesic path $\big(x_0+t \dot{x}_0\big)_{0\leq t\leq 1}$, as $\sigma$ tends to zero.  \vspace{0.1cm}
   \item The \emph{time-rescaled} process $(x^{\sigma}_{\sigma^2 t})_{0\leq t \leq 1}$ converges weakly to a Euclidean Brownian motion with covariance matrix $\frac{4}{d(d-1)}$ times the identity, as $\sigma$ tends to infinity.
\end{enumerate}
\end{prop}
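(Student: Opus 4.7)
The backbone of both items is the elementary identity
\[
x^\sigma_t - x_0 \;=\; \int_0^t \dot x^\sigma_s\,ds,
\]
combined with a description of the velocity process $(\dot x^\sigma_s)_{s\geq 0}$, which from system \eqref{eqn.system.eucli} is a standard spherical Brownian motion run at speed $\sigma^2$.

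For the small noise regime, the generator $\tfrac{\sigma^2}{2}\Delta_{\bbS^{d-1}}$ of $\dot x^\sigma$ degenerates to zero, so $\dot x^\sigma$ stays close to its starting point $\dot x_0$ on a fixed time window. More concretely, writing $\dot x^\sigma_s = b_{\sigma^2 s}$ for a standard spherical Brownian motion $b$, the continuity of $b$ at the origin yields $\sup_{s\in[0,1]}|\dot x^\sigma_s - \dot x_0| \to 0$ in probability as $\sigma\to 0$, whence $\sup_{t\in[0,1]}|x^\sigma_t - (x_0 + t\dot x_0)| \leq \int_0^1 |\dot x^\sigma_s - \dot x_0|\,ds \to 0$ in probability. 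This gives weak convergence to the Dirac mass on the geodesic.

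For the high noise regime the strategy is a standard homogenization/averaging argument based on a martingale integration-by-parts. The $i$th component of the velocity reads $d\dot x^i_t = -\sigma^2\tfrac{d-1}{2}\dot x^i_t\,dt + \sigma\,dN^i_t$, where $N$ is the $\RR^d$-valued martingale with bracket $d\langle N^i, N^j\rangle_t = (\delta^{ij} - \dot x^i_t\dot x^j_t)\,dt$ read off from \eqref{eqn.system.eucli}. Solving for the integral of $\dot x^\sigma$ and evaluating at time $\sigma^2 t$ yields
\[
x^\sigma_{\sigma^2 t} - x_0 \;=\; \frac{2}{\sigma^2(d-1)}\bigl(\dot x_0 - \dot x^\sigma_{\sigma^2 t}\bigr) \;+\; \frac{2}{\sigma(d-1)}\,N^\sigma_{\sigma^2 t}.
\]
The boundary term is bounded by $\tfrac{4}{\sigma^2(d-1)}$ uniformly in $t$ and vanishes as $\sigma \to \infty$. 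The martingale term $M^\sigma_t := \tfrac{2}{\sigma(d-1)} N^\sigma_{\sigma^2 t}$ has brackets
\[
\langle M^{\sigma,i}, M^{\sigma,j}\rangle_t \;=\; \frac{4}{(d-1)^2}\int_0^t \bigl(\delta^{ij} - \dot x^i_{\sigma^2 s}\dot x^j_{\sigma^2 s}\bigr)\,ds.
\]
Rewriting $\dot x^\sigma_{\sigma^2 s}= b_{\sigma^4 s}$ and changing variables, the average $\tfrac{1}{\sigma^4 t}\int_0^{\sigma^4 t} b^i_u b^j_u\,du$ converges, by ergodicity of spherical Brownian motion against the uniform measure $\mu$ on $\bbS^{d-1}$, to $\int u^i u^j\,d\mu(u) = \delta^{ij}/d$. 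Consequently the bracket of $M^\sigma$ converges in probability, uniformly on $[0,1]$, to the deterministic matrix $\tfrac{4}{d(d-1)}\delta^{ij}\,t$.

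One now concludes by invoking the functional martingale central limit theorem (Rebolledo / Ethier--Kurtz): the continuous martingales $M^\sigma$ have uniformly bounded jumps (none, in fact) and their brackets converge in probability to a deterministic continuous limit, so $M^\sigma$ converges weakly in $C([0,1],\RR^d)$ to a centered Gaussian martingale with covariance $\tfrac{4}{d(d-1)}\delta^{ij}\,t$, i.e.\ to the claimed rescaled Brownian motion. Combined with the uniform smallness of the boundary term, this yields the announced weak convergence. The main technical point is the ergodic identification of the limiting bracket; the verification of the CLT hypotheses themselves is routine given the explicit bracket formula.
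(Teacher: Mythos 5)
Your proof is correct and follows essentially the same route as the paper's first argument: the same integration-by-parts decomposition into a uniformly vanishing boundary term plus a martingale, the same time-change to a standard spherical Brownian motion, and the same ergodic identification of the limiting bracket $\frac{4}{d(d-1)}t\,\delta^{ij}$. The only cosmetic difference is that you conclude via the Rebolledo/Ethier--Kurtz martingale functional CLT where the paper invokes the asymptotic Knight theorem of Revuz--Yor; these are interchangeable here, and your explicit treatment of the $\sigma\to 0$ case fills in a step the paper simply calls straightforward.
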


\smallskip

\begin{proof}
The convergence result when $\sigma$ tends to zero is straightforward. We present two proofs of the weak convergence to Brownian motion as $\sigma$ tends to infinity, to highlight the elementary nature of this claim. 
In order to simplify the expressions, let us define for all $t \geq 0$
\[
X^{\sigma}_t:= x_{\sigma^2 t}^{\sigma}.
\]
The first approach takes as a starting point the integrated version of equation \eqref{eqn.system.eucli}, namely
\begin{equation}
\label{EqDynamicsFrakX}
X^{\sigma}_t  = x_0 + \frac{2 }{d-1}\,\frac{1}{\sigma^2} \left( \dot{x}_0-\dot{x}_{\sigma^2 t}^{\sigma} \right) +  M^{\sigma}_{t},
\end{equation}
where $M^{\sigma}_{t}$ is a $d-$dimensional martingale whose bracket is given by
\[
\langle M^{\sigma,i}, M^{\sigma,j}\rangle_t =\frac{4}{(d-1)^2} \,\frac{1}{\sigma^2}   \int_0^{\sigma^2 t}  \big(\delta^{ij} - \dot{x}^{\sigma,i}_s  \dot{x}^{\sigma,j}_s \big)\,ds.
\]
The mid-terms in the right hand side of Equation (\ref{EqDynamicsFrakX}) clearly go to zero when $\sigma$ tends to $\infty$, uniformly in $t$, so that the asymptotic behavior of $X^{\sigma}_t $ is the same as that of the martingale $M^{\sigma}_t$. To analyse that martingale, note that the time-rescaled process $(y_t)_{t \geq 0}:=(\dot{x}_{t/\sigma^2})_{t \geq 0}$ is a standard Brownian motion on $\bbS^{d-1}$, solution of the equation, for each $1\leq i\leq d$
\begin{equation}
\label{EqWidetildeY}
\begin{split}
d y^i_t &= - \frac{d-1}{2} y^i_tdt  + \sum_{j=1}^d \big(  \delta^{ij} - y^i_t y^j_t \big) d B^j_t, \\
                              &=: - \frac{d-1}{2} y^i_tdt  + d N_t^i,
\end{split}
\end{equation}
for some $\RR^d$-valued Brownian motion $B$. The bracket of the martingale $M^{\sigma}$ is simply given in terms of $(y_{t})_{t \geq 0}$ by the formula
\[
\langle M^{\sigma,i}, M^{\sigma,j}\rangle_t =\frac{4}{(d-1)^2} \, \frac{1}{\sigma^4}  \int_0^{\sigma^4 t}  \big(\delta^{ij} - y^i_s  y^j_s \big)\,ds,
\]
so, for a fixed time $t$, the ergodic theorem satisfied by the process $(y_{t})_{t \geq 0}$ entails the almost sure convergence 
\[
\lim_{\sigma \to +\infty}\langle M^{\sigma,i}, M^{\sigma,j}\rangle_t =  \frac{4}{d(d-1)}\,t\,\delta^{ij}.
\]
The result of Proposition \ref{PropWeakConvergenceBM} then follows from the asymptotic version of Knight Theorem, as stated form instance in Theorem 2.3 and Corollary 2.4, pp. 524-525, in the book \cite{revuz} of Revuz and Yor. The second approach consists in starting from the integral representation
\begin{align}
X_{t}^{\sigma} = x_0 +\frac{1}{\sigma^2}  \int_0^{\sigma^4 t} y^i_s ds, \label{xytilde}
\end{align}
where $(y_t)_{t \geq0}$ is the above standard spherical Brownian motion, so that the result can alternatively be seen a consequence of a standard central limit theorem for ergodic diffusions applied to the ergodic process $(y_t)_{t \geq 0}$, see for instance the reference \cite{djalil}.
\end{proof}

We shall prove in Section \ref{SubsectionRPInterpolation} that the weak convergence result of Proposition \ref{PropWeakConvergenceBM} can be enhanced to the weak convergence of the rough path associated with the random $C^1$ path $\left(X^\sigma_{t}\right)_{t \geq 0}$ to the Stratonovich Brownian rough path. This requires the study of the $(\RR^d)^{\otimes 2}$-valued process defined for any $t\geq 0$ by the integral
$$
\mathbb X^{\sigma}_t = \int_0^t X^\sigma_{ s}\otimes d X^\sigma_{s}.
$$

For those readers not acquainted with tensor products, one can simply see the tensor product $a\otimes b$ of two vectors $a,b$ in $\R^d$, as the linear map $x\in\R^d\mapsto \big(\sum_{i=1..d}b^ix^i\big)\,a$.

\begin{prop}
\label{convRP}
The $\RR^d\times(\RR^d)^{\otimes 2}$-valued process $(\bfX_{t}^\sigma )_{t \geq 0}:= \big(X^\sigma_t, \mathbb X^\sigma_t\big)_{t \geq0}$ converges weakly, as $\sigma$ goes to infinity, to the Brownian rough path over $\RR^d$, run at speed $\frac{4}{d(d-1)}$. 
\end{prop}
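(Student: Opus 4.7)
The plan is the standard two-step scheme from rough-paths theory: first, establish uniform-in-$\sigma$ Kolmogorov-type moment bounds on both the increments and the iterated integrals of $\bfX^\sigma$ to obtain tightness in a suitable Hölder rough-path topology; second, identify any subsequential weak limit as the Stratonovich Brownian rough path by combining the convergence of $X^\sigma$ from Proposition \ref{PropWeakConvergenceBM} with a martingale limit theorem for the Lévy area.

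For the moment estimates, set $\mathbb X^\sigma_{s,t}:=\int_s^t(X^\sigma_u-X^\sigma_s)\otimes dX^\sigma_u$ and rely on the decomposition \eqref{EqDynamicsFrakX}, which reads $X^\sigma=x_0+R^\sigma+M^\sigma$ with $\|R^\sigma\|_\infty=O(\sigma^{-2})$ and $M^\sigma$ a martingale whose trace-bracket satisfies $\sum_i\langle M^{\sigma,i}\rangle_t\leq \tfrac{4}{d-1}\,t$. The Burkholder--Davis--Gundy inequality immediately yields $\EE|X^\sigma_t-X^\sigma_s|^p\leq C_p|t-s|^{p/2}$ for every $p\geq 2$, uniformly in $\sigma\geq 1$. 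For the second level, the $\mcC^1$ character of $X^\sigma$ gives the algebraic identity $\mathbb X^\sigma_{s,t}+(\mathbb X^\sigma_{s,t})^\top=(X^\sigma_t-X^\sigma_s)^{\otimes 2}$, so it is enough to control the antisymmetric Lévy-area part $A^\sigma_{s,t}$. Plugging $X^\sigma=x_0+R^\sigma+M^\sigma$ into $A^\sigma_{s,t}$ and using integration by parts to transfer derivatives off $R^\sigma$ (whose total variation is not controlled uniformly in $\sigma$, only its sup-norm is), one reduces the estimate to the pure martingale Lévy area $\int_s^t(M^\sigma_u-M^\sigma_s)\otimes dM^\sigma_u$. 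A second application of BDG, combined with the linear bracket bound, then gives $\EE|A^\sigma_{s,t}|^{p/2}\leq C_p|t-s|^{p/2}$. A Kolmogorov-type tightness criterion for rough paths (as in Friz--Victoir, \emph{Multidimensional Stochastic Processes as Rough Paths}) therefore yields tightness of $\{\bfX^\sigma\}_{\sigma\geq 1}$ in the $\alpha$-Hölder rough-path topology for every $\alpha\in(1/3,1/2)$.

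To identify the limit, let $(Y,\mathbb Y)$ be any subsequential weak accumulation point. Proposition \ref{PropWeakConvergenceBM} forces $Y$ to be a Brownian motion run at speed $\tfrac{4}{d(d-1)}$, and passing to the limit in the symmetric-part identity above forces $\mathbb Y^{\mathrm{sym}}_{s,t}=\tfrac{1}{2}(Y_t-Y_s)^{\otimes 2}$. For the antisymmetric part, since $R^\sigma\to 0$ uniformly, the Lévy area of $X^\sigma$ agrees, up to a term vanishing in probability, with that of the martingale $M^\sigma$; as $\langle M^\sigma\rangle_t\to\tfrac{4}{d(d-1)}\,t\,\mathrm{Id}$ in probability and $M^\sigma$ itself converges weakly to the corresponding Brownian motion, a Kurtz--Protter type functional limit theorem for stochastic integrals identifies the limit of the Itô iterated integrals of $M^\sigma$ with those of the limiting Brownian motion. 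Adding back the symmetric correction yields exactly the Stratonovich Brownian rough path at the prescribed speed, and uniqueness of the limit upgrades subsequential to full weak convergence.

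The main obstacle is the Lévy-area moment bound in step one: the remainder $R^\sigma$ cannot be discarded naively, because only its sup-norm, and not its total variation, is uniformly small in $\sigma$; this forces the integration-by-parts argument that transfers the analysis onto the well-behaved martingale component $M^\sigma$, for which the BDG inequality and the linear bracket estimate close the bookkeeping.
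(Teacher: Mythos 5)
Your proposal is correct in strategy and follows essentially the same route as the paper, with the caveat that you are really proving the stronger statement of Proposition \ref{PropRPConvergence} (convergence in rough path topology), whereas Proposition \ref{convRP} only asserts weak convergence of $(\bfX^\sigma_t)_{t\geq 0}$ as a process in the uniform topology. For that weaker statement the paper needs no tightness in H\"older rough-path spaces: it writes $\mathbb X^\sigma$ explicitly in terms of the spherical Brownian motion $y$ and its martingale part $N$, kills the boundary and cross terms in $L^2$ using the boundedness of $y$, and identifies the limit of the remaining martingale L\'evy area $\int_0^t(M^\sigma_s\otimes dM^\sigma_s-dM^\sigma_s\otimes M^\sigma_s)$ exactly as you do, via a uniform-tightness/Kurtz--Protter type theorem for stochastic integrals (Jacod--Shiryaev, Theorem 6.2). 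Your identification step is therefore the paper's proof of Proposition \ref{convRP} and Lemma \ref{lem.levybrown}; your tightness step is the content of the paper's Section \ref{SubsectionRPInterpolation}, carried out there by the same Kolmogorov--Lamperti criterion and the same reduction to the martingale component.

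One step as written would not close: the claim that Burkholder--Davis--Gundy \emph{immediately} yields $\EE\big|X^\sigma_{ts}\big|^p\leq C_p|t-s|^{p/2}$ from the decomposition $X^\sigma=x_0+R^\sigma+M^\sigma$. BDG handles $M^\sigma$, but the sup-norm bound $\|R^\sigma\|_\infty=O(\sigma^{-2})$ does not give a $|t-s|^{1/2}$ modulus for the increment $R^\sigma_{ts}$: when $|t-s|\ll\sigma^{-4}$ the bound $\sigma^{-2}$ exceeds $|t-s|^{1/2}$. One needs the complementary estimate $\EE\big|R^\sigma_{ts}\big|^q\leq C_q\min\big(\sigma^{-2q},|t-s|^{q/2}\big)$, which follows from writing $R^\sigma_{ts}$ as $\sigma^{-2}$ times an increment of the unit-speed spherical Brownian motion over a time interval of length $\sigma^4|t-s|$; this is precisely the paper's term $(1)$ in the proof of the tightness bounds, and the same refined estimate is what makes your integration-by-parts reduction of the L\'evy area close at the second level (the cross terms $\int_s^t R^\sigma_{us}\otimes dM^\sigma_u$ need $|R^\sigma_{us}|\lesssim|u-s|^{1/2}$ in $L^q$, not just $\lesssim\sigma^{-2}$). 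With that estimate supplied, your argument is complete.
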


\medskip

\begin{proof}
It will be convenient to use the representation of $X^{\sigma}_{t}$ given by identity \eqref{xytilde}, for which there is no loss of generality in assuming $x_0=0$. Recall $(N_t)_{t \geq 0}$ stands for the martingale part of $(y_t)_{t \geq 0}$ in Equation \eqref{EqWidetildeY}, seen as an $\R^d$-valued path. We have
\begin{equation*}
\begin{split}
\mathbb X^\sigma_t &= \frac{2}{(d-1)\sigma^4}\,\left(-\int_0^{\sigma^4t}y_s^{\otimes 2}ds  + \frac{2}{d-1}N_{\sigma^4t}^{\otimes 2}\right)\\
&+ \frac{4}{(d-1)^2\sigma^4}\,\left(-N_{\sigma^4t}\otimes y_{\sigma^4t} + \int_0^{\sigma^4t} dN_s\otimes y_s - \int_0^{\sigma^4t} dN_s\otimes N_s \right).
\end{split}
\end{equation*}

As the symmetric part $\frac{1}{2}\left(X^\sigma_{t}\right)^{\otimes 2}$ of $\mathbb X^\sigma_t$ converges to the corresponding symmetric part of the Brownian rough path, by Proposition \ref{PropWeakConvergenceBM}, we are left with proving the corresponding convergence result for the anti-symmetric part of the process $(\mathbb X^\sigma_t)_{t \geq 0}$, namely
\begin{equation}\label{eq.antisym}
\begin{split}
&\frac{4}{(d-1)^2\sigma^4}\,\left( y_{\sigma^4t}\otimes N_{\sigma^4t} - N_{\sigma^4t}\otimes y_{\sigma^4t} + \int_0^{\sigma^4t} dN_s\otimes y_s - y_s\otimes dN_s \right) \\
&+ \frac{4}{(d-1)^2}\,\frac{1}{\sigma^4} \int_0^{\sigma^4t}\big(N_s\otimes dN_s - dN_s\otimes N_s\big).
\end{split}
\end{equation}
As the process $(y_t)_{t \geq 0 }$ lives on the unit sphere, the terms in the first line of expression \eqref{eq.antisym} clearly converge to zero in $\bbL^2$, as $\sigma$ goes to infinity, uniformly on bounded intervals of time. Using the notations introduced in Equation \eqref{EqDynamicsFrakX}, we are thus left with the martingale term
\[
\mathbb M^{\sigma}_t:= \frac{4}{(d-1)^2}\,\frac{1}{\sigma^4} \int_0^{\sigma^4t}\big(N_s\otimes dN_s - dN_s\otimes N_s\big) = \int_0^t \big(M^\sigma_s\otimes dM^\sigma_s - dM^\sigma_s\otimes M^\sigma_s\big).
\]
The weak convergence of this term to the awaited L\'evy area of the Brownian rough path is dealt with by the following lemma, which concludes the proof.
\end{proof}

\begin{lemma}
\label{lem.levybrown}
The $\RR^d\times(\RR^d)^{\otimes 2}$-valued martingale $({\bf M}^{\sigma}_t)_{t \geq 0} := \left(M^{\sigma}_t, \mathbb M^{\sigma}_t \right)_{t \geq 0}$ converges weakly as $\sigma$ goes to infinity, to the process 
\[
\left(B_t, \int_0^t \left (B_s\otimes dB_s - dB_s\otimes B_s\right)\right)_{t \geq 0},
\]
where $B$ is an $\RR^d$-valued Brownian motion run at speed $\frac{4}{d(d-1)}$.
\end{lemma}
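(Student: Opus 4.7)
The strategy is to establish joint weak convergence of the pair ${\bf M}^\sigma = (M^\sigma, \mathbb M^\sigma)$ by combining a tightness estimate with an identification of the limit. Marginal convergence of $M^\sigma$ to the rescaled Brownian motion $B$ is already contained in the proof of Proposition \ref{PropWeakConvergenceBM}, so the novelty lies in handling the iterated integral $\mathbb M^\sigma$ and showing that it converges jointly with $M^\sigma$ to the It\^o L\'evy area of $B$ (which coincides with the Stratonovich one, since only the antisymmetric part is considered here).

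For tightness, I would apply a Kolmogorov-type moment criterion for rough paths. Since the matrix $\delta^{ij}-y^i_s y^j_s$ is the orthogonal projector onto $y_s^\perp$, its entries are bounded by $1$, so that $d\langle M^{\sigma,i},M^{\sigma,j}\rangle_u \leq \tfrac{4}{(d-1)^2}\delta^{ij}\,du$ uniformly in $\sigma$. The Burkholder-Davis-Gundy inequality then yields the standard bound $\EE|M^\sigma_t-M^\sigma_s|^p \leq C_p (t-s)^{p/2}$ for all $p\geq 1$. Writing
\[
\mathbb M^\sigma_{s,t} := \int_s^t (M^\sigma_u-M^\sigma_s)\otimes dM^\sigma_u - \int_s^t dM^\sigma_u \otimes (M^\sigma_u-M^\sigma_s),
\]
one recognizes an $(\RR^d)^{\otimes 2}$-valued martingale on $[s,t]$ whose bracket is controlled by $\int_s^t |M^\sigma_u-M^\sigma_s|^2\,du$; a second application of BDG, together with the previous moment estimate, yields the companion bound $\EE|\mathbb M^\sigma_{s,t}|^p \leq C_p (t-s)^p$. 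These are precisely the bounds required by the Kolmogorov criterion for rough paths, which provides tightness of the family $({\bf M}^\sigma)_{\sigma>0}$ in the $\alpha$-H\"older rough path topology for any $\alpha<1/2$.

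To identify the limit, I would appeal to a Kurtz-Protter type result on weak convergence of stochastic integrals. The continuous martingales $M^\sigma$ satisfy the uniform-tightness condition $\sup_\sigma \EE\langle M^\sigma\rangle_1<\infty$ and converge in law to $B$, which is enough to obtain the joint weak convergence
\[
\Bigl(M^\sigma,\, \textstyle\int_0^\cdot M^\sigma\otimes dM^\sigma\Bigr) \;\Longrightarrow\; \Bigl(B,\, \textstyle\int_0^\cdot B\otimes dB\Bigr).
\]
Taking the antisymmetric combination identifies the limit of $\mathbb M^\sigma$ as the L\'evy area of $B$, which is the claim. The delicate point is exactly this identification step: the bracket of $\mathbb M^\sigma$ is not asymptotically deterministic, as it depends on $M^\sigma$ itself, so the functional martingale central limit theorem does not apply directly, and one must go through either the convergence-of-integrals machinery above or a uniqueness argument for the martingale problem satisfied by any subsequential rough-path limit.
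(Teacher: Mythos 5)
Your identification step is exactly the paper's argument: the paper writes ${\bf M}^{\sigma}_t=\int_0^t L_{M^{\sigma}_s}\,dM^{\sigma}_s$ for a continuous linear map $L$, invokes the stability theorem for stochastic integrals under weak convergence (Theorem 6.2 of \cite{JS03}, the Jacod--Shiryaev form of the Kurtz--Protter result you cite), and verifies the uniform tightness of the integrators $M^\sigma$ via the boundedness/convergence of their brackets, just as you do. The additional rough-path moment estimates you supply are not needed for this lemma (the paper proves them separately in Section \ref{SubsectionRPInterpolation} to upgrade the convergence to the rough path topology), but they are correct and consistent with what the paper does there.
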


\begin{proof}
Given $x\in \R^d$, let $L_x$ stand for the linear map from $\RR^d$ to $\RR^d \times (\R^d)^{\otimes 2}$ defined by the formula
$$
L_{x}v:=\big(v, vx^{*} - xv^{*}\big),
$$
where the star notation is used to denote linear form canonically associated with an element of the Euclidean space $\RR^d$. With this notation, we can write
\[
{\bf M}^{\sigma}_t= \int_0^t L_{M^{\sigma}_s} dM^{\sigma}_s.
\]
Recall that we have already proved that the process $M_{\cdot}^{\sigma}$ converges weakly when $\sigma$ goes to infinity, to a Brownian motion $B_t$ of variance $\frac{4}{d(d-1)}$. It follows from the continuity of the map $x\mapsto L_x$,  that the process $\left(L_{M^{\sigma}_t},M^{\sigma}_t\right)_{0\leq t\leq 1}$ converges weakly to $\left(L_{B_t}, B_t\right)_{0\leq t\leq 1}$. Refering to Theorem 6.2, p. 383 of \cite{JS03}, the result of the lemma will follow from the previous fact if we can prove that the martingales $M^{\sigma}$ are uniformly tight -- see Definition 6.1, p. 277 of \cite{JS03}. But since we are working with continuous martingales whose brackets converge almost surely when $\sigma$ goes to infinity, we can use Proposition 6.13 of \cite{JS03}, p. 379, to obtain the awaited tightness.
\end{proof}

\subsection{From paths to rough paths in the Euclidean setting}
\label{SubsectionRPInterpolation}

Proposition \ref{convRP} shows that the natural lift ${\bfX}^\sigma_{\cdot}$ of $X^\sigma_{\cdot}$ as a weak geometric H\"older $p$-rough path, for $2<p<3$, converges weakly to the Brownian rough path, when seen as an element of a space of continuous paths on $[0,1]$, endowed with the topology of uniform convergence. We show in this section that a stronger convergence result holds, with the rough path metric in place of the uniform topology. This stronger convergence result will be the main ingredient used in Section \ref{SubsectionGeodesicToBrownian} to prove the interpolation theorem  \ref{ThmInterpolationGeodBm}, by relying on Lyons' universal limit theorem.

\smallskip

For the basics of rough paths theory, we refer the reader to the very nice account given in the book \cite{FrizHairer} by Friz and Hairer. Alternative pedagogical accounts of the theory can be found in \cite{M2Course,BaudoinLectureNotes,LejayIntro2}, see also the book \cite{FrizVictoir} of Friz and Victoir for a thourough account of their approach of the theory. Let $2<p<3$ be given.

\begin{prop}
\label{PropRPConvergence}
The weak geometric H\"older $p$-rough path $({\bfX}^\sigma_t)_{0 \leq t \leq 1}$ converges weakly as a rough path to the Brownian rough path.
\end{prop}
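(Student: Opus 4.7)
The plan is to upgrade the weak convergence in uniform topology obtained in Proposition \ref{convRP} to weak convergence in the H\"older $p$-rough path topology, via the standard interpolation/tightness argument of rough paths theory. The task splits into two parts: establish tightness of $\{\bfX^\sigma\}_{\sigma \geq 1}$ in a H\"older $p'$-rough path topology for some $p' \in (2,p)$, then identify every accumulation point using the already-proved convergence in the weaker uniform topology.

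First I would establish the following uniform moment bounds: for every $q \geq 1$ there exists $C_q > 0$, independent of $\sigma \geq 1$, such that
\[
\E\big[|X^\sigma_t - X^\sigma_s|^q\big]^{1/q} \leq C_q \sqrt{t-s}, \qquad \E\big[|\mathbb X^\sigma_{s,t}|^{q/2}\big]^{2/q} \leq C_q (t-s),
\]
for all $0 \leq s \leq t \leq 1$. The first bound follows from the representation \eqref{EqDynamicsFrakX}: the increment of the martingale $M^\sigma$ is handled by the BDG inequality, its bracket being bounded by $\frac{4}{(d-1)^2}(t-s)$ uniformly in $\sigma$, while the boundary contribution $\frac{2}{(d-1)\sigma^2}(\dot{x}_{\sigma^2 s}^\sigma - \dot{x}_{\sigma^2 t}^\sigma)$ enjoys $\sqrt{t-s}$ moments because the $\sqrt{\cdot}$ H\"older regularity of spherical Brownian motion exactly compensates the $\sigma^{-2}$ prefactor against the $\sigma^4$ time rescaling. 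The second bound reduces, via the decomposition carried out in the proof of Proposition \ref{convRP}, to controlling the $(\R^d)^{\otimes 2}$-valued martingale $\mathbb M^\sigma_t = \int_0^t (M^\sigma_s \otimes dM^\sigma_s - dM^\sigma_s \otimes M^\sigma_s)$, to which BDG applies with the explicit form of its bracket; the antisymmetric boundary terms of \eqref{eq.antisym} are of order $O(\sigma^{-2})$ and provide only lower-order corrections.

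Once these uniform bounds are in hand, the Kolmogorov criterion for rough paths (Theorem 3.1 of \cite{FrizHairer}) yields tightness of the family $\{\bfX^\sigma\}$ in the H\"older $p'$-rough path topology for every $p' \in (2,p)$, by choosing $q$ large enough so that $1/p' < 1/2 - 1/q$. Since this topology is finer than the topology of uniform convergence, every weak subsequential limit of $\{\bfX^\sigma\}$ must coincide with the limit identified in Proposition \ref{convRP}, namely the Stratonovich Brownian rough path; therefore the full family converges weakly in the H\"older $p$-rough path topology. The hard part is the uniform-in-$\sigma$ control of the iterated integral $\mathbb X^\sigma_{s,t}$, since a priori the quantity $\sigma^{-4}\int_0^{\sigma^4 t}(N_s \otimes dN_s - dN_s \otimes N_s)$ involves a non-stationary martingale on a window growing with $\sigma$; uniformity comes from the exact matching between the $\sigma^{-4}$ prefactor and the Brownian-like growth of the iterated integral over a window of length $\sigma^4(t-s)$, together with the fact that $y$ lives on a compact sphere so that the bracket of $N$ is controlled uniformly.
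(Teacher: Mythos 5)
Your overall architecture is exactly that of the paper: uniform-in-$\sigma$ Kolmogorov-type moment bounds on $X^\sigma_{ts}$ and on the iterated integral, tightness in a H\"older rough path space via the Kolmogorov--Lamperti criterion, and identification of the unique cluster point through Proposition \ref{convRP}. Your treatment of the first-level bound is also the paper's: BDG for the martingale part with its uniformly controlled bracket, and the exact compensation of the $\sigma^{-2}$ prefactor by the $\tfrac12$-H\"older regularity of $y$ on the time scale $\sigma^4$.

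The gap is in the second-level bound. What the criterion requires is the \emph{two-parameter} estimate $\sup_\sigma\EE\big[|\mathbb X^\sigma_{ts}|^q\big]\leq C_q|t-s|^q$ for the Chen increment $\mathbb X^\sigma_{ts}=\int_s^t X^\sigma_{us}\otimes dX^\sigma_u$, and the decomposition \eqref{eq.antisym} is not fit for this purpose: it is a decomposition of the one-parameter path $\mathbb X^\sigma_t$ anchored at time $0$, whose non-martingale terms are small \emph{in $\sigma$} (order $\sigma^{-2}$ in $\bbL^2$, uniformly in $t$), not small in $|t-s|$. A bound of order $\sigma^{-2q}$, or even $\sigma^{-2q}|t-s|^{q/2}$ as BDG gives for the increment of $\sigma^{-4}\int_0^{\sigma^4\cdot}(dN\otimes y-y\otimes dN)$, is \emph{not} dominated by $C|t-s|^q$ uniformly in $\sigma$ (take $|t-s|\sim\sigma^{-3}$, say, where $\sigma^{-2q}|t-s|^{q/2}=\sigma^{q}|t-s|^{2q}\gg|t-s|^q$); similarly the increment of $\sigma^{-4}\,y_{\sigma^4 t}\otimes N_{\sigma^4 t}$ is only $O\big(\min(\sqrt{t-s},\sigma^{-2})\big)$ in $\bbL^q$, which exceeds $|t-s|$ on the range $\sigma^{-4}\ll|t-s|\ll\sigma^{-2}$. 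What saves the estimate is that these global boundary terms recombine with the cross term $X^\sigma_s\otimes X^\sigma_{ts}$ of Chen's relation into quantities anchored at $s$; this is why the paper starts over from $\mathbb X^\sigma_{ts}=\sigma^{-4}\int_{\sigma^4s}^{\sigma^4t}\big(\int_{\sigma^4s}^uy_v\,dv\big)\otimes y_u\,du$, substitutes $y_v\,dv=\tfrac{2}{d-1}(-dy_v+dN_v)$ to get the decomposition \eqref{EqEstimateXts} in which every factor is an increment over $[\sigma^4s,\sigma^4t]$, and then integrates by parts, reusing the first-level bound on $Y_{\sigma^4v,\sigma^4s}/\sigma^2$ to produce the two required powers of $|t-s|^{1/2}$. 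Your BDG treatment of the genuine martingale term $\mathbb M^\sigma$ is fine; it is the dismissal of the remaining terms as ``$O(\sigma^{-2})$ corrections'' that does not deliver the uniform $|t-s|^q$ bound and must be replaced by the anchored computation.
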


The remainder of this section is dedicated to the proof of this statement. Our strategy of proof is simple. Using elementary moment estimates, we show that the family of laws of ${\bfX}^\sigma_{\cdot}$ is tight in some rough paths space. As the rough path topology is stronger than the topology of uniform convergence on bounded intervals, Proposition \ref{convRP} identifies the unique possible limit for these probability measures on the rough paths space, giving the convergence result as a consequence. 

To show tightness, we shall rely on the following Kolmogorov-Lamperti-type compactness criterion; see Corollary A.12 of \cite{FrizVictoir} for a reference. As a shortcut, we will write $z_{ts}$ or $z_{t,s}$ for the increment $z_t-z_s$, for any $s\leq t$, and any path $z_\cdot$ with values in a vector space.

\begin{theo}[Kolmogorov-Lamperti tightness criterion]
\label{ThmKolmoLamperti}
Given any $\frac{1}{3} < \gamma \leq \frac{1}{2}$, consider the laws of $({\bfX}^\sigma_t)_{0\leq t \leq 1} = \big(X^\sigma_t,\mathbb X^\sigma_t\big)_{0 \leq t \leq 1}$, for $\sigma>0$, as probability measures on the metric space $\textrm{\emph{\textsf{RP}}}(\gamma)$ of weak geometric $\gamma$-H\"older rough paths. If the following moment estimates 
\begin{equation}
\begin{split}
&\sup_\sigma\,\EE\Big[\big|X^\sigma_{ts}\big|^q\Big] \leq C_q\,|t-s|^{\frac{q}{2}},  \\
&\sup_\sigma\,\EE\Big[\big|\mathbb X^\sigma_{ts}\big|^q\Big] \leq C_q\,|t-s|^q,
\end{split}
\end{equation}
hold for all $0\leq s\leq t\leq 1$, for some positive constants $C_q$, for all $q\geq 2$, then the family of laws of $({\bfX}^\sigma_t)_{0\leq t \leq 1}$ is tight in $\textrm{\emph{\textsf{RP}}}(\gamma)$.
\end{theo}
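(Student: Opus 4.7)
The plan is to upgrade the two uniform-in-$\sigma$ moment estimates on increments into uniform-in-$\sigma$ $\bbL^q$ bounds on Hölder-type seminorms controlling $\bfX^\sigma$ at some regularity $\gamma^\ast$ strictly larger than the target $\gamma$, and then to invoke the compactness of Hölder balls in $\textsf{RP}(\gamma)$ together with Markov's inequality to deduce tightness.

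More concretely, fix $\gamma^\ast \in (\gamma, 1/2)$ and choose $q$ large enough that $q\bigl(\tfrac{1}{2} - \gamma^\ast\bigr) > 1$. The central technical tool is the Garsia-Rodemich-Rumsey lemma, the quantitative pathwise form of Kolmogorov's continuity criterion. Applied to the one-parameter process $X^\sigma$ under the first hypothesis, it yields a random constant $K_1^\sigma$ with $\sup_\sigma \EE\bigl[(K_1^\sigma)^q\bigr] < \infty$ and
\[
\bigl|X^\sigma_{ts}\bigr| \leq K_1^\sigma\,|t-s|^{\gamma^\ast}, \qquad 0 \leq s \leq t \leq 1.
\]
Its two-parameter variant on the simplex, applied to $(s,t)\mapsto\mathbb X^\sigma_{ts}$ under the second hypothesis, produces a random constant $K_2^\sigma$, again with $\sup_\sigma \EE\bigl[(K_2^\sigma)^q\bigr] < \infty$, such that
\[
\bigl|\mathbb{X}^\sigma_{ts}\bigr| \leq K_2^\sigma\,|t-s|^{2\gamma^\ast}, \qquad 0 \leq s \leq t \leq 1.
\]

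Given these pathwise Hölder bounds, Markov's inequality gives $\sup_\sigma \PP\bigl(K_1^\sigma + K_2^\sigma > R\bigr) \leq C R^{-q}$, so that for every $\varepsilon > 0$ one can find $R$ with the laws of $\bfX^\sigma$ concentrated, up to mass $\varepsilon$, on the sublevel set where the $\gamma^\ast$-Hölder norm of the first level and the $2\gamma^\ast$-Hölder norm of the second level are both bounded by $R$. This sublevel set is relatively compact in $\textsf{RP}(\gamma)$: the inclusion of $\gamma^\ast$-Hölder rough paths into $\gamma$-Hölder rough paths is compact, a statement that reduces, level by level, to Ascoli-Arzelà. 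Tightness in $\textsf{RP}(\gamma)$ follows at once.

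The main technical obstacle lies in the second step: transferring a moment bound on the two-index quantity $\mathbb{X}^\sigma_{ts}$ into a uniform $2\gamma^\ast$-Hölder control on the simplex. Unlike the first level, $\mathbb{X}^\sigma$ is not a path but a map on pairs $(s,t)$, and the classical Kolmogorov argument must be coupled with Chen's relation, which expresses the defect $\mathbb{X}^\sigma_{ts} - \mathbb{X}^\sigma_{tu} - \mathbb{X}^\sigma_{us}$ as a tensor product of first-level increments whose size is already controlled by the first step. This consistency between the two levels is the rough-path-specific content of the criterion; once it is in place, the remainder of the argument is the standard Kolmogorov-Čentsov/Lamperti compactness scheme.
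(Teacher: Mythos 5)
Your proposal is correct and is essentially the canonical argument: the paper offers no proof of this statement, referring instead to Corollary A.12 of \cite{FrizVictoir}, and your scheme --- a Kolmogorov--Garsia-type upgrade of the uniform moment bounds to pathwise $\gamma^\ast$-H\"older (resp.\ $2\gamma^\ast$-H\"older) controls for some $\gamma^\ast\in(\gamma,1/2)$ with $q(\tfrac12-\gamma^\ast)>1$, using Chen's relation to run the dyadic argument on the two-parameter second level, followed by Markov's inequality and the compact embedding of $\gamma^\ast$-H\"older rough path balls into $\textrm{\textsf{RP}}(\gamma)$ via interpolation and Ascoli--Arzel\`a --- is exactly the proof of that cited result. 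Nothing further is needed.
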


We shall prove these two bounds separately, starting with the bound on $X^\sigma_{ts}$. We use for that purpose the representation
$$
X^\sigma_{ts} = \frac{1}{\sigma^2}\int_{\sigma^4s}^{\sigma^4t} y_u\,du,
$$
in terms of a unit speed Brownian motion $(y_t)_{t\geq 0}$ on the sphere, already used in the previous section, together with Equation \eqref{EqWidetildeY} giving the dynamics of $(y_t)_{t\geq 0}$ in terms of an $\RR^d$-valued Brownian motion $(B_t)_{t \geq 0}$. Denoting by $x^i$ the $i^\textrm{th}$ coordinate of a vector $x\in\RR^d$, this gives the equation
$$
(X^\sigma)^i_{ts} = \frac{2}{(d-1)\sigma^2}\left(-y^i_{\sigma^4t,\sigma^4s} + \int_{\sigma^4s}^{\sigma^4t}\big(\delta^{ij} - y^i_uy^j_u\big)\,dB^j_u\right).
$$
So we have
\begin{equation}
\label{EqEstimateFrakX}
\EE\Big[\big|X^\sigma_{ts}\big|^q\Big] \leq \frac{c_q}{\sigma^{2q}} \left\{   \EE\Big[\big|y_{\sigma^4t,\sigma^4s}\big|^q\Big] +  \EE\left[\left|\int_{\sigma^4s}^{\sigma^4t}\big(\delta^{ij} - y^i_uy^j_u\big)\,dB^j_u\right|^q\right]\right\} =: c_q \left\{  (1) + (2)\right\}
\end{equation}
for some positive constant $c_q$ depending only on $q$. Note that term $(2)$ contains an implicit sum over $i$ and $j$. We use Burkholder-Davis-Gundy inequality to deal with it, taking into account the fact that the process $(y_t)_{t \geq 0}$ takes values in the unit sphere. This gives the estimate
\begin{equation}
\label{EqEstimateTerm2}
(2) \leq \sigma^{-2q} \EE\left[\left|\int_{\sigma^4s}^{\sigma^4t}\big(\delta^{ij} - y^i_uy^j_u\big)\,du\right|^\frac{q}{2}\right] \leq 2^\frac{q}{2} |t-s|^\frac{q}{2}.
\end{equation}
For term $(1)$, remark simply that since the identity
$$
y_{ba} = -\frac{d-1}{2}\int_a^by_udu + B_{ba}-\int_a^b y_u\big(y_u^i dB^i_u\big)
$$
holds for all $0\leq a\leq b\leq 1$, for some $\RR^d$-valued Brownian motion $B$, we have
$$
\EE\big[|y_{ba}|^q\big] \leq c_q\left[ \left(\frac{d-1}{2} \right)^q |b-a|^q+|b-a|^\frac{q}{2}+|b-a|^\frac{q}{2}\right],
$$
using the fact that $(y_t)_{t \geq 0}$ lives on the unit sphere, so the process $\int_0^{\cdot} y_u^idW^i_u$ is a real-valued Brownian motion, together with Burkholder-Davis-Gundy inequality. So we have 
$$
\EE\big[|y_{ba}|^q\big] \leq c\,|b-a|^\frac{q}{2}
$$
for all $0\leq a\leq b\leq 1$, for some well-chosen constant $c\geq 2^q$, since $|y_{ba}|\leq 2$. The upper bound $\EE\big[|X_{ts}|^q\big] \leq c_q |t-s|^\frac{q}{2}$, follows then from equations \eqref{EqEstimateFrakX} and \eqref{EqEstimateTerm2}. To deal with the double integral $\mathbb X^\sigma_{ts}$, let us start from Equation \eqref{EqWidetildeY} to write
\begin{equation}
\label{EqEstimateXts}
\begin{split}
\mathbb X_{ts} &= \frac{1}{\sigma^4}\int_{\sigma^4s}^{\sigma^4t} \left(\int_{\sigma^4s}^u y_v dv\right)\otimes y_u\,du \\
                     &= \frac{1}{\sigma^4}\int_{\sigma^4s}^{\sigma^4t} \frac{-2}{d-1}\,y_{u,\sigma^4s}\otimes y_u du + \frac{2}{(d-1)\sigma^4}\int_{\sigma^4s}^{\sigma^4t}  N_{\sigma^4s,u}\otimes y_u du.
\end{split}
\end{equation}
Again, as the process $(y_t)_{t \geq 0}$ lives in the unit sphere, the first term in the right hand side above is bounded above by $\frac{2}{d-1}\,|t-s|$. Let us denote by $(Y_t)_{t \geq 0}$ the $C^1$ path in with values in $\RR^d$ whose velocity is given $(y_t)_{t \geq 0}$. To deal with the term involving the martingale $N$ in equation \eqref{EqEstimateXts}, we use an integration by parts, and the elementary inequality $|a+b|^q\leq c\,\big(|a|^q + |b|^q\big)$, to get the existence of a positive constant $c_q$ depending only on $q$ such that we have
\begin{equation}
\EE\left[\left| \frac{1}{\sigma^4}\int_{\sigma^4s}^{\sigma^4t}  N_{\sigma^4s,u}\otimes y_u du\right|^q\right] 
\leq c_q\big\{(3)+(4)\big\}.
\end{equation}
where we have set
\[
(3):= \EE\left[\left|\frac{N_{\sigma^4t,\sigma^4s}\otimes Y_{\sigma^4t,\sigma^4s}}{\sigma^4}\right|^q\right], \qquad (4):= \EE\left[\left| \frac{1}{\sigma^4}\int_{\sigma^4s}^{\sigma^4t}  dN_u\otimes Y_{u ,\sigma^4s}du\right|^q\right].
\]
There is an absolute positive constant $c$ for which
$$
(3) \leq c\,\EE\left[\left|\frac{N_{\sigma^4t,\sigma^4s}}{\sigma^2}\right|^q\,\left|\frac{Y_{\sigma^4t,\sigma^4s}}{\sigma^2}\right|^q\right] \leq c\,\EE\left[\left|\frac{N_{\sigma^4t,\sigma^4s}}{\sigma^2}\right|^{2q}\right]^\frac{1}{2}  \EE\left[\left|\frac{Y_{\sigma^4t,\sigma^4s}}{\sigma^2}\right|^{2q}\right]^\frac{1}{2}
$$
On the one hand, taking once more into account the fact the the process $(y_t)_{t\geq 0}$ lives on the unit sphere, the Burkholder-Davis-Gundy inequality gives us the upper bound
\begin{equation*}
\EE\left[\left|\frac{N_{\sigma^4t,\sigma^4s}}{\sigma^2}\right|^{2q}\right] = \EE\left[\left|\int_s^t \big(\delta_{\cdot j}-y_{\sigma^4u}y^j_{\sigma^4u}\big)dB^j_u\right|^{2q}\right] \leq c'_q |t-s|^q.
\end{equation*}
On the other hand, we can use the identity
$$
\frac{Y_{\sigma^4t,\sigma^4s}}{\sigma^2} = -\frac{2}{(d-1)\sigma^2}y_{\sigma^4t,\sigma^4s} + \frac{2}{(d-1)\sigma^2}N_{\sigma^4t,\sigma^4s}
$$
and the fact that $\sigma^{-2}N_{\sigma^4t,\sigma^4s}$ has the same law as $\int_s^t\big(\delta^{\cdot j} - y_{\sigma^4u}y^j_{\sigma^4u}\big)dB_u^j$, to see that 
\begin{equation*}
\EE\left[\left|\frac{Y_{\sigma^4t,\sigma^4s}}{\sigma^2}\right|^{2q}\right] \leq c'_q\left\{\EE\left[\left|\frac{y_{\sigma^4t,\sigma^4s}}{\sigma^2}\right|^{2q}\right] + \EE\left[\left|\int_s^t\big(\delta^{\cdot j}-y_{\sigma^4u}y^j_{\sigma^4u}\big)dB_u^j\right|^{2q}\right] \right\}.
\end{equation*}
We recognize in the first term on the right hand side term $(1)$, with $2q$ in the role of $q$, and use the Burkholder-Davis-Gundy inequality to deal with the other expectation. The two results together give an  upper bound of size $ |t-s|^q$, up to a multiplicative constant depending only on $q$, showing that $(3)$ is also bounded above by a constant multiple of $ |t-s|^q$. The inequality of Burkholder-Davis-Gundy is used once more, together with the bound just proved, to estimate term $(4)$ by a constant multiple of
\begin{equation*}
\begin{split}
\EE\left[\left(\int_{\sigma^4s}^{\sigma^4t}\left|\frac{Y_{u,\sigma^4s}}{\sigma^4}\right|^2\,du\right)^\frac{q}{2}\right] &= \EE\left[\left(\int_s^t\left|\frac{Y_{\sigma^4v,\sigma^4s}}{\sigma^2}\right|^2\,dv\right)^\frac{q}{2}\right] \\
&\leq c_q|t-s|^{q-1}\int_s^t \EE\left[\left|\frac{Y_{\sigma^4v,\sigma^4s}}{\sigma^2}\right|^q\right]\,dv \\
&\leq c_q'|t-s|^{\frac{q}{2}-1}\int_s^t |v-s|^\frac{q}{2}\,dv  = c'_q |t-s|^q.
\end{split}
\end{equation*}
Note that all the above estimates hold regardless of the values of $\sigma>0$. All together, these estimates prove that the two conditions of the compactness in Theorem \ref{ThmKolmoLamperti} hold for the family of weak geometric $\gamma$-H\"older rough paths $({\bfX}^\sigma_t)_{0\leq t \leq 1}$. The family of laws of $({\bfX}^\sigma_t)_{0\leq t \leq 1}$ is thus relatively compact on the space $\textrm{\textsf{RP}}(\gamma)$. As its unique possible cluster point is identified by Proposition \ref{convRP}, this proves the weak convergence of the random weak geometric $\gamma$-H\"older rough paths $({\bfX}^\sigma_t)_{0\leq t \leq 1}$ to the Brownian rough path, as stated in Proposition \ref{PropRPConvergence}.

\subsection{From the Euclidean to the Riemannian setting via Cartan's development map}
\label{SubsectionGeodesicToBrownian}

The homogenization result proved in Proposition \ref{PropRPConvergence} puts us in a position to use the machinery of rough differential equations and prove homogenization results for solutions of rough differential equations driven by ${\bfX}^\sigma_{\cdot}$, using to our advantage the continuity of the It\^o map in a rough paths setting. This is in particular the case of kinetic Brownian motion on any complete Riemannian manifold, which can be constructed from kinetic Brownian motion on $\RR^d$, using Cartan's development map. The interpolation theorem \ref{ThmInterpolationGeodBm} will follow from this picture of kinetic Brownian motion as a continuous image of ${\bfX}^\sigma_{\cdot}$. 
Before following that plan, we recall the reader the basics of Cartan's development method and rough differential equations.

\medskip

\subsubsection{Cartan's development map}
\label{SubsubsectionCartan}

As advertized above, one can actually construct kinetic Brownian motion on a complete Riemannian manifold $\mcM$ by rolling on $\mcM$ without slipping its Euclidean analogue. While this will be clear to the specialists from the very definition of kinetic Brownian motion, Cartan's development procedure can be explained to the others as follows. In its classical form, this machinery provides a flexible and convenient way of describing $C^2$ paths on $\mathcal{M}$ from $\R^d$-valued paths; it requires the use of the frame bundle and the horizontal vector fields associated with a choice of connection (Levi-Civita connection presently).

\medskip

 Let H stand for the $TO\mcM$-valued $1$-form on $\R^d$ uniquely characterized by the property 
 $$
 \pi_*\big(\textrm{H}(u)\big)(z)=e(u),
 $$ 
 for any $u\in\R^d$ and $z=(x,e)\in O\mcM$. 
 
\begin{defi} 
Given $z_0=(x_0,e_0)$ in $O\mcM$, \emph{\textbf{Cartan's development of an $\R^d$-valued path}} $(m_t)_{0\leq t\leq 1}$ of class $C^2$ is defined as the solution to the ordinary differential equation
\begin{equation}
\label{EqCartanDvpt}
dz_t = \textrm{\emph{H}}(z_t)\,dm_t
\end{equation}
started from $z_0$. As before, we shall write $z_t=(x_t,e_t)\in O\mcM$. 
\end{defi}

This description of an $O\mcM$-valued path may seem somewhat different from the kind of dynamics described by Equation \eqref{EqDefnKbm} defining kinetic Brownian motion. To make the link clear, assume, without loss of generality, that $(m_t)_{0\leq t \leq 1}$ is run at unit speed, and denote its speed by $(\dot m_t)_{0\leq t \leq 1}$. Recall the definition of the elements ${v}_i$ of the Lie algebra of $SO(d)$ given at the end of Section \ref{SubsectionIntro}. Then, given an orthonormal basis $f_0$ of $\R^d$, with $f_0(\ep_1)=\dot m_0$, solve the $SO(d)$-valued ordinary differential equation 
$$
df_t =\big(f_t(\ep_i),d\dot m_t\big){v}_i(f_t),
$$
started from $f_0$, and define the $\R^{d-1}$-valued control $(h_t)_{0\leq t \leq 1}$, started from zero, by the formula
$$
dh^i_t = \big(f_t(\ep_i),d\dot m_t\big),
$$
for $2\leq i\leq d$. Now, given any orthonormal basis $g_0$ of $T_{m_0}\mcM$ with $g_0(\ep_1) = e_0(\ep_1)$, it is elementary to see that the $O\mcM$-valued path $\big(z'_t\big)_{0\leq t\leq 1}$ obtained by parallel transport of $g_0$ along the path $(x_t)_{0\leq t \leq 1} = (\pi(z_t))_{0\leq t \leq 1}$, satisfies the ordinary differential equation
\begin{equation}
\label{EqCartan1}
dz'_t = H_1\big(z'_t\big)dt + V_i\big(z'_t\big)\,dh^i_t.
\end{equation}
Let emphasize the fact that, by construction 
\[
\pi(z'_t) = x_t,
\] 
Both Equations \eqref{EqCartanDvpt} and \eqref{EqCartan1} make perfect sense with a $C^1$ path $(m_t)_{0\leq t \leq 1}$ whose derivative $(\dot m_t)_{0\leq t \leq 1}$ is a continuous semimartingale; we talk in that case of \textbf{stochastic development}. The reader can consult the book \cite{hsu} of Hsu for a pedagogical account of stochastic differential geometry. Considering in particular the stochastic development in $\mcM$ of kinetic Brownian motion on $\R^d$, provides a control
\begin{equation}
\label{EqCartan2}
dh^i_t = \sigma\,{\circ d}B^i_s,
\end{equation}
for some Stratonovich $(d-1)$-dimensional Brownian increment, in which case Equation \eqref{EqCartan1} is nothing but the equation giving the dynamics of kinetic Brownian motion, identifying $(z^\sigma_t)_{t \geq 0}$ with $(z'_t)_{t \geq 0}$. Since the control given by formula \eqref{EqCartan2} has a law independant of the arbitrary choice of frame $f_0$ satisfying the above constraint, the path $x^{\sigma}_\cdot = \pi(z_\cdot)= \pi(z^\sigma_\cdot) $ is seen to be the projection on $\mcM$ of a kinetic Brownian motion on $\mcM$.

\medskip

Although we know that the $\R^d$-valued part $(m^\sigma_t)_{t \geq 0}$ of kinetic Brownian motion on $\R^d$ converges weakly to some Brownian motion $B$ run at speed $\frac{4}{d(d-1)}$, the setting of It\^o or Stratonovich differential calculus is not robust enough to infer from that result the weak convergence of the developped process $(x^\sigma_t)_{0 \leq t \leq 1}$ to the process $(\pi(w_t) )_{0\leq t \leq 1}$, with $(w_t)_{0\leq t \leq 1}$ solution of the $O\mcM$-valued stochastic differential equation
$$
dw_t = \frac{2}{\sqrt{d(d-1)}}\textrm{\emph{H}}(w_t)\,{\circ d}B_t,
$$
for which $\pi(w_t)$ is nothing else than a scalar time-changed Brownian motion on $\mcM$. This comes from the fundamental lack of continuity of the solution map for stochastic differential equations, in It\^o stochastic integration theory. This missing crucial continuity property is precisely what rough paths theory provides. We give in the next subsection the information on rough paths theory and rough differential equations needed to understand our reasoning, and refer the reader to the litterature on the subject for more insights; see the references below.

\subsubsection{Rough differential equations and the interpolation result}
\label{SubsubsectionRDEs}

Let $\mathcal N$ be a smooth finite dimensional manifold. We adopt in this work the definition of a solution path to an $\mathcal N$-valued rough differential equation given in \cite{BailleulFlows, RDEsBanach}, as it is perfectly suited for our needs. It essentially amounts to requiring from a solution path that it satisfies some uniform Taylor-Euler expansion formulas, in the line of Davie' seminal work \cite{Davie}.  Let $A_1,\dots,{A}_\ell$ be vector fields of class $C^3$ on $\mathcal N$; write $A$ for the vector field-valued 1-form 
$$
A : u\in\R^\ell \mapsto \sum_{i=1}^\ell u^i{A}_i.
$$
Let $2<p<3$ and a weak geometric H\"older $p$-rough path $\bfX=(X,\bbX)$ over $\R^\ell$ be given.

\begin{defi}
An $\mathcal N$-valued continuous path $(m_t)_{0\leq t<\tau}$ is said to solve the rough differential equation
\begin{equation}
\label{EqManifoldValuedRDE}
dm_t = A(m_t)\,{\bfX}(dt)
\end{equation}
if there exists a constant $a>1$ with the following property. For any $0\leq s<\tau$,  there exists an open neighbourhood $\mcV_s$ of $m_s$ such that the Taylor-Euler expansion
$$
f(m_t) = f(m_s) + X^i_{ts}\big({A}_if\big)(m_s) + \bbX^{jk}_{ts}\big({A}_j{A}_kf\big)(m_s) + O\big(|t-s|^a\big) 
$$
holds for all $t$ close enough to $s$, for $m_t$ to belong to $\mcV_s$, and for any function $f$ of class $C^3$ defined on $\mcV_s$.
\end{defi}

The remainder term $O\big(|t-s|^a\big)$ is allowed to depend on $\bfX$ and $f$. It should be clear on this definition that the notions of classical controlled and rough differential equations coincide if the driving rough path is the canonical lift of a $C^1$ path, for the $\bbX_{ts}$ term, of size $(t-s)^2$ in that case, can be put in the remainder. The crucial point for us here will be that if $\bfX$ is the (Stratonovich) Brownian rough path, associated with Brownian motion $B$, then this notion of solution gives back the solution of the Stratonovich differential equation
\[
dm_t = V_i(m_t)\,{\circ d}B^i_t.
\]
See for instance the lecture notes \cite{FrizHairer} of Friz and Hairer, or the book \cite{LyonsQianBook} of Lyons and Qian. Classical results  show that such a rough differential equation has a unique maximal solution started from any given point, and that the \textbf{It\^o map}, that associates to the driving signal $\bfX$ the solution path $x$ to the rough differential equation \eqref{EqManifoldValuedRDE} is continuous in the following sense. Fix $T<\tau$ and cover the compact support of the path $(m_t)_{0\leq t\leq T}$ by finitely many local chart domains $(O'_i)_{1\leq i\leq N}$ and $(O_i)_{1\leq i\leq N}$, with $O'_i\subset O_i$ for all $1\leq i\leq N$. Then, there exists a positive constant $\delta$ such that for every rough path $\bf Y$ $\delta$-close to $\bfX$ in the H\"older rough path distance, the solution path $(y_t)_{0\leq t<\tau}$ to the rough differential equation 
\[
dy_t = {A}(y_t)\,{\bf Y}(dt)
\]
started from $y_0=m_0$, is well-defined on the time interval $[0,T]$ and remains in the open neighbourhood $\bigcup_{i=1}^N O_i$ of the support of $(m_t)_{0\leq t\leq T}$, with $y_t$ in $O_i$ whenever $m_t$ is in $O'_i$. This very strong continuity property implies in particular the following ``transport property'' of weak convergence. We endow the space $\Omega_0$ defined in the introduction of Section \ref{SectionInterpolation} with the topology generated by the elementary open sets 
$$
\Big\{(y_t)_{0 \leq t \leq 1}\in C \big([0,1],{\mathcal N}\sqcup\{\partial\}\big)\,;\,\textrm{dist}(y_t,\gamma_t)<\ep, \textrm{  for all }t\leq \tau_R\Big\},
$$
where $(\gamma_t)_{0 \leq t \leq 1}$ is any element of $\Omega_0$ started from a point of the manifold, $R$ and $\ep$ are arbitrary positive constants, and $\textrm{dist}$ stands for any Riemannian distance function defined in a neighbourhood of the support of $\gamma$. The constant path, equal to $\partial$, is isolated. 

\ssk

\begin{prop}[``Transporting'' weak convergence by the It\^o map]
\label{PropTransportWeakConvergence}
Let $({\bfX}^k)_{k\geq 0}$ be a sequence of random weak geometric $\frac{1}{p}$-H\"older rough paths over $\R^\ell$, whose distribution converges weakly in the rough paths space as $k$ goes to infinity, to the distribution of some random weak geometric H\"older $p$-rough path $\bfX$. Fix $x_0\in\mathcal N$, and consider the solution path $(m^k_t)_{0\leq t \leq 1}$ to the equation
$$
dm^k_t = {A}\big(m^k_t\big)\,{\bfX}^k(dt),
$$
as an element of $\Omega_0$. Then its distribution converges locally weakly to the distribution of the solution path to the rough differential equation
$$
dm_t = {A}\big(m_t\big)\,{\bfX}(dt)
$$
started from $x_0$.
\end{prop}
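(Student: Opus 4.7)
The strategy is to upgrade the weak convergence of $\mathbf{X}^k$ to an almost sure convergence through Skorokhod's representation theorem, and then transport this pathwise convergence to the associated solutions via the It\^o-Lyons continuity property recalled just above. Since the space of weak geometric $\frac{1}{p}$-H\"older rough paths is a complete separable metric space, Skorokhod's theorem produces, on a common auxiliary probability space, versions $\widetilde{\mathbf{X}}^k$ and $\widetilde{\mathbf{X}}$ of $\mathbf{X}^k$ and $\mathbf{X}$ with $\widetilde{\mathbf{X}}^k\to\widetilde{\mathbf{X}}$ almost surely in rough path topology. Let $\widetilde{m}$ and $\widetilde{m}^k$ denote the corresponding maximal It\^o-Lyons solutions started from $x_0$, extended by the cemetery point $\partial$ past their respective explosion times so that both take values in $\Omega_0$; write $\tau$ for the explosion time of $\widetilde{m}$.

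\noindent\textbf{Transporting convergence through $T_R$.} Fix $R>0$ and a realisation of the coupling. Choose $R'>R$ and a time $T<\min(1,\tau)$ such that $\widetilde{m}([0,T])\subset B_{R'}$ and, if $\widetilde{m}$ exits $B_R$ before time $1$, also $\tau_R(\widetilde{m})<T$. Applying the continuity property on $[0,T]$ to a finite chart cover of the compact set $\widetilde{m}([0,T])$ shows that for $k$ large, $\widetilde{m}^k$ is well defined on $[0,T]$, stays in an open neighbourhood of $\widetilde{m}([0,T])$ in $\mcM$, and converges to $\widetilde{m}$ uniformly on $[0,T]$ in any fixed chart (refining the cover gives arbitrarily small uniform distance). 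When $\widetilde{m}$ either stays strictly inside $B_R$ on $[0,1]$ or crosses $\partial B_R$ transversally at $\tau_R$, this uniform convergence forces $\tau_R(\widetilde{m}^k)\to\tau_R(\widetilde{m})$ and $T_R(\widetilde{m}^k)\to T_R(\widetilde{m})$ in $C([0,1],\bar{B}_R)$. Dominated convergence then yields $\EE[F(T_R(\widetilde{m}^k))]\to\EE[F(T_R(\widetilde{m}))]$ for every bounded continuous $F$ on $C([0,1],\bar{B}_R)$, establishing weak convergence of $\PP^k\circ T_R^{-1}$.

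\noindent\textbf{Main obstacle.} The delicate point is the transversality assumption: it may fail if $\widetilde{m}$ touches $\partial B_R$ tangentially, in which case $T_R$ need not be continuous at $\widetilde{m}$. This is however mild in the present context. The non-increasing function $R\mapsto\PP(\tau_R(\widetilde{m})<1)=\PP(\sup_{t\leq 1}d(x_0,\widetilde{m}_t)\geq R)$ has at most countably many discontinuities, and at each continuity point the tangential event $\{\sup_{t\leq 1}d(x_0,\widetilde{m}_t)=R\}$ has zero probability. For $R$ in this generic co-countable set, the argument of the second paragraph is unconditional. An arbitrary radius $R>0$ is finally recovered by approximating it with admissible radii $R_n\uparrow R$, using that $T_{R_n}(\gamma)\to T_R(\gamma)$ in $C([0,1],\bar{B}_R)$ for any path $\gamma\in\Omega_0$ that does not linger on $\partial B_R$, the only source of failure being again a null event under the limit measure. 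The rest of the proof is the transparent combination of Skorokhod representation and Lyons' continuity sketched above.
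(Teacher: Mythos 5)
Your overall strategy --- Skorokhod representation to turn weak convergence of the driving rough paths into almost sure convergence on an auxiliary space, followed by a pathwise application of the continuity of the It\^o--Lyons map --- is precisely the argument the paper has in mind; the paper in fact offers no written proof, presenting the proposition as an immediate consequence of the continuity property stated just before it, so your attempt is more detailed than the original. Two small technical points on that first part: the space of weak geometric $\frac1p$-H\"older rough paths is complete but \emph{not} separable (for the same reason H\"older spaces are not separable), so Skorokhod's theorem has to be invoked on the separable subspace of geometric rough paths, on which all the laws at hand are supported; apart from that, the chart-covering argument and the choice of $T$ beyond $\tau_R(\widetilde m)$ are fine.

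The genuine gap is in your treatment of the exceptional radii, and it is twofold. First, the set where $T_R$ fails to be continuous along the limit path is not the event $\{\sup_{t\le1} d(x_0,\widetilde m_t)=R\}$: a path whose supremum strictly exceeds $R$ may reach $\partial B_R$ for the \emph{first} time at a tangential local maximum of the distance, come back inside, and only later cross transversally; $T_R$ is discontinuous at such a path even though $\sup>R$. The correct requirement is $\inf\{t:\,d(x_0,\widetilde m_t)\ge R\}=\inf\{t:\,d(x_0,\widetilde m_t)> R\}$ almost surely; for a fixed path the set of radii violating this is countable (it sits inside the discontinuity set of the nondecreasing map $R\mapsto\tau_R$), so Fubini yields the conclusion for Lebesgue-almost every $R$. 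Second, and more seriously, the final extension to \emph{every} $R$ cannot be repaired, because for a fixed bad radius the conclusion is simply false: take $\mathcal N=\R$, $A=\mathrm{Id}$, $x_0=0$, and let $\bfX^k=(1-\tfrac1k)\bfX$ where $\bfX$ is the canonical lift of a deterministic smooth path that touches the level $R$ tangentially at time $\tfrac12$ and returns to $0$. Then $T_R(m^k)=m^k\to m$ uniformly, while $T_R(m)$ is frozen from time $\tfrac12$ onwards, so $\PP_k\circ T_R^{-1}$ converges to the wrong Dirac mass. The honest output of your (otherwise correct) argument is convergence of $\PP_k\circ T_R^{-1}$ for all $R$ outside a Lebesgue-null set, which is all that is ever needed downstream; in the application the limit is the Brownian rough path, whose development leaves every geodesic sphere immediately upon touching it, so that there every $R$ is indeed good.
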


The interpolation result Theorem \ref{ThmInterpolationGeodBm} follows directly from this fact, together with the construction of kinetic Brownian motion on $\mcM$ as the image of its $\R^d$-valued counterpart by Cartan's development map, as described above, and the results of Section \ref{SubsectionRPInterpolation} on the weak convergence of the rough path lift of kinetic Brownian motion on $\R^d$ to the Stratonovich Brownian rough path. This closes the proof of the interpolation theorem \ref{ThmInterpolationGeodBm}.

\bigskip

The above result on transport of weak convergence only gives locally weak convergence of the image measures. The additional ingredient needed to turn that local weak convergence into genuine weak convergence, as stated in corollary \ref{cocorico}, is a direct consequence of the following elementary lemma, in which we denote by $\mcC_R$ the set of continuous $B_R$-valued paths on $[0,1]$ -- recall $B_R$ stands for a geodesic ball of radius $R$. Denote by $\partial A$ the boundary of a subset $A$ of a topological space.

\begin{lemma}
Let $(\PP_n)_{n\geq 0}$ be a sequence of probability measures on $\Omega_0$ such that 
\begin{itemize}
\item the measures $\PP_n$ are all supported by $C\left([0,1],\mcM\right)$, \vspace{0.2cm}
\item they converge locally weakly to a probability measure $\PP$ on $\Omega_0$, which is also supported on $C\left([0,1],\mathcal{M}\right)$, and such that $\PP\big(\partial \mcC_R\big)=0$, for all $R>0$.
\end{itemize}
Then the sequence $\PP_n$ converge weakly to $\PP$ in $C\left([0,1], \mathcal{M}\right)$.
\end{lemma}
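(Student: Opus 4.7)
The strategy is to upgrade the local weak convergence to full weak convergence on $C([0,1],\mcM)$ via a truncation argument built on the map $T_R$ and the Portmanteau theorem. Let $F : C([0,1],\mcM)\to\RR$ be an arbitrary bounded continuous test function with sup-norm $M$, fix $\epsilon>0$, and set $A_R := \{\gamma : \gamma_t \in B_R \text{ for all } t \in [0,1]\}$. Since $\PP$ is supported on $C([0,1],\mcM)$, almost every path under $\PP$ is bounded in $\mcM$, so $\PP(A_R^c)\to 0$ as $R\to\infty$; the first step will be to pick $R$ large enough that $\PP(A_R^c)<\epsilon$.

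The next step is the triangle inequality
$$
\bigl|\EE_{\PP_n}[F] - \EE_\PP[F]\bigr| \leq \bigl|\EE_{\PP_n}[F - F\circ T_R]\bigr| + \bigl|\EE_{\PP_n}[F\circ T_R] - \EE_\PP[F\circ T_R]\bigr| + \bigl|\EE_\PP[F\circ T_R - F]\bigr|.
$$
The central term vanishes as $n\to\infty$ by the local weak convergence hypothesis, since $F$ restricts to a bounded continuous function on $C([0,1],\bar{B}_R)$ and $\PP_n\circ T_R^{-1}\to\PP\circ T_R^{-1}$ weakly on that space. On $A_R$ one has $T_R(\gamma)=\gamma$, so the outer terms are bounded respectively by $2M\,\PP_n(A_R^c)$ and $2M\,\PP(A_R^c)<2M\epsilon$.

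The crucial step, where the main obstacle lies, is the uniform control of $\PP_n(A_R^c)$. Inspecting the definition of $T_R$ yields the identity $A_R^c = T_R^{-1}(\partial\mcC_R)$: if $\gamma$ exits $B_R$ at time $\tau_R$, continuity forces $\gamma_{\tau_R}\in\partial B_R$, so $T_R(\gamma)$ stays in $\bar{B}_R$ while touching $\partial B_R$, hence lies in $\partial\mcC_R$; conversely, any path never leaving $B_R$ is sent into $\mcC_R$. The set $\mcC_R$ is open in the metric space $C([0,1],\bar{B}_R)$ because compactness of $[0,1]$ forces any path strictly contained in $B_R$ to have positive distance to $\partial B_R$, so $\partial\mcC_R$ is closed. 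Applying the closed-set half of the Portmanteau theorem to the weak convergence $\PP_n\circ T_R^{-1}\to\PP\circ T_R^{-1}$ will then give
$$
\limsup_{n\to\infty} \PP_n(A_R^c) = \limsup_{n\to\infty}(\PP_n\circ T_R^{-1})(\partial\mcC_R) \leq (\PP\circ T_R^{-1})(\partial\mcC_R) = \PP(A_R^c) < \epsilon,
$$
the hypothesis $\PP(\partial\mcC_R)=0$ entering here to confirm that $\PP(A_R^c)$ reduces to the small mass $\PP(F_R)$ of paths strictly exiting $\bar{B}_R$, which is controlled by the support condition on $\PP$.

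Putting everything together, for $n$ large enough one obtains $\bigl|\EE_{\PP_n}[F] - \EE_\PP[F]\bigr| \leq 6M\epsilon + o(1)$, and letting $\epsilon\downarrow 0$ concludes. The truly delicate issue throughout is the discontinuity of $T_R$ at paths that graze $\partial B_R$ without strictly crossing it, a set made null by the boundary hypothesis; it is precisely that nullity which makes the closed-set Portmanteau inequality above tight enough for the argument to close.
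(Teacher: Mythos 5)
Your argument is correct in substance but follows a genuinely different route from the paper's. The paper works directly with Borel sets: for $A$ with $\PP(\partial A)=0$ it splits $A$ into $A\cap\mcC_R$ and $A\cap\mcC_R^c$, uses the hypothesis $\PP(\partial\mcC_R)=0$ to make $A\cap\mcC_R$ a continuity set so that local weak convergence gives $\PP_n(A\cap\mcC_R)\to\PP(A\cap\mcC_R)$, and controls the remainder by the support condition. You instead test against a bounded continuous $F$, replace $F$ by $F\circ T_R$, and control the two error terms by $\PP_n(A_R^c)$ and $\PP(A_R^c)$ via the closed-set half of the Portmanteau theorem. Both are valid; your version has the notable feature that, once repaired as below, it never actually invokes the hypothesis $\PP(\partial\mcC_R)=0$, because the error terms only require a one-sided (upper) bound, for which the closed-set Portmanteau inequality suffices.

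The one step that does not hold as written is the identity $A_R^c=T_R^{-1}(\partial\mcC_R)$. What your argument actually establishes is $\gamma\in A_R^c\Rightarrow T_R(\gamma)\notin\mcC_R$; to conclude that $T_R(\gamma)$ lies in $\partial\mcC_R=\overline{\mcC_R}\setminus\mcC_R$ you would additionally need every stopped path touching $\partial B_R$ to be a uniform limit of paths staying strictly inside $B_R$, i.e.\ a density statement for $\mcC_R$ in the range of $T_R$, which you have not justified and which is delicate for general geodesic balls. It is also unnecessary: the correct identity is simply $A_R^c=T_R^{-1}\big(\mcC_R^c\big)$, where $\mcC_R^c$ denotes the complement of $\mcC_R$ inside $C([0,1],\bar{B}_R)$. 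Since $\mcC_R$ is open in $C([0,1],\bar{B}_R)$ (your compactness argument), $\mcC_R^c$ is closed, and the same Portmanteau inequality yields
\[
\limsup_{n\to\infty}\PP_n(A_R^c)=\limsup_{n\to\infty}\big(\PP_n\circ T_R^{-1}\big)\big(\mcC_R^c\big)\leq\big(\PP\circ T_R^{-1}\big)\big(\mcC_R^c\big)=\PP(A_R^c)<\epsilon .
\]
With that substitution the proof closes.
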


\begin{proof}
First note that given any Borel set $A$ of $C\big([0,1], \mcM\big)$ such that $\PP (\partial A) = 0$, we have
\begin{align*}
\big\vert \PP_n (A) -\PP (A) \big\vert \leq \big\vert \PP_n(A\cap \mcC_R) - \PP (A \cap \mcC_R) \big\vert + \big\vert \PP_n(A \cap \mcC_{R}^c ) \big\vert + \big\vert \PP (A \cap \mcC_{R}^c ) \big\vert. 
\end{align*}
Since $C\big([0,1],\mcM\big) = \bigcup_{R>0} \mcC_R$, and since $\PP$ is supported on $C\big([0,1],\mcM\big)$, given any $\varepsilon>0$, we can find $R$ large enough such that $\big\vert 1- \PP(\mcC_R) \big\vert \leq \varepsilon$; in particular $\big\vert \PP (A \cap\,\mcC_{R}^c ) \big\vert \leq \varepsilon$. We have also
\[
 \big\vert \PP_n(A \cap \mcC_{R}^c ) \big\vert \leq \big\vert 1 - \PP_n(\mcC_R) \big\vert.
\]
Moreover since $\PP(\partial \mcC_R )=0$ we obtain that $\PP(\partial (A\cap \mcC_R) ) =0$ and since $\PP_n$ converges locally weakly to $\PP$ we have 
\[
\PP_n\big( T_{R}^{-1} (A \cap \mcC_R) \big)  \underset{n \to + \infty}{\longrightarrow} \PP \big(T_{R}^{-1}( A \cap \mcC_R) \big).
\]
Since $T_{R}^{-1} (A \cap \mcC_R) =A \cap C_R$ it comes
\[
\vert \PP_n(A\cap \mcC_R) - \PP (A \cap \mcC_R) \vert \underset{n \to +\infty}{\longrightarrow} 0
\]
and for the same reason $\PP_n(\mcC_R)$ converges to $\PP(C_R)$. Thus $\limsup_n \big\vert 1 - \PP_n(\mcC_{R}^c ) \big\vert \leq \varepsilon$ and finally
\[
\limsup_{n \to \infty} \big\vert \PP_n(A) -\PP(A) \big\vert \leq 2 \varepsilon
\]
which ends the proof of the lemma.
\end{proof}

\section{Asymptotics on rotationally invariant manifolds}
\label{SectionAsymptotics}

We aim, in this last section, to try and understand the long time asymptotic behaviour of kinetic Brownian motion when $t$ goes to infinity and $\sigma$ is fixed. As for classical Brownian motion on a general Riemannian manifold, there is no hope to fully determine the asymptotic behaviour of kinetic Brownian motion on arbitrary Riemannian manifolds, as even for Brownian motion it is likely to  depend on the base space in a very sensitive way; see for instance the work \cite{marcanton} of Arnaudon, Thalmaier and Ulsamer for the study of the asymptotic behaviour of Brownian motion on  Cartan-Hadamard manifolds. Of crucial importance in the latter study is the fact that the distance to a fixed point defines a one-dimensional subdiffusion. The difficulties are actually a priori greater here as kinetic Brownian motion does not live on the base manifold, but on its unit tangent bundle, so that it is basically $(2d-1)-$dimensional when the base manifold have dimension $d$, and there is no general reason why it should have lower-dimensional subdiffusions. \par

\medskip

The question of the long time asymptotic behaviour of a manifold-valued diffusion process is of course of different nature depending on whether or not the underlying manifold is compact. If it is compact, the question consists mainly in studying the trend to equilibrium of the process, and in relating the rate of convergence to the geometry of the manifold or to the parameters of the process. In our case, the infinitesimal generator of kinetic Brownian motion is hypoelliptic and $T^1\mathcal M$ is compact, if $\mcM$ is compact, and it is elementary to see that for all times $t$ strictly greater than the diameter of $\mcM$, the density $p_t(\cdot, \cdot)$ of the process is uniformly bounded below by a positive time-dependent constant. This ensures that the law of the process converges exponentially fast to the equilibrium measure, which is the Liouville measure on the unitary tangent bundle here. The question of determining the exact rate of convergence to equilibrium, in terms of the geometry of $\mathcal M$ and the parameter $\sigma$ is difficult and will be the object of another work by the authors. \par

\medskip

If the base manifold is non-compact, the question of the long time asymptotics of a diffusion process amounts to find whether it is recurrent or transient, to exhibit some geometric asymptotic random variables associated to the sample paths, and, in the nicest situations, to determine the Poisson boundary of the process.  In order to get some tractable and significant information on the asymptotic behaviour of kinetic Brownian motion, we will restrict ourselves here on the case where the underlying manifold $\mathcal M$ is rotationally invariant. As in the case of classical Brownian motion, symmetries simplifies greatly the study as they allow to exhibit some lower dimensional subdiffusions of the initial process. The class of rotationally invariant manifolds is nevertheless rich enough to have some good idea of the interplay between geometry, through curvature of the manifold, and the asymptotic behaviour of the process. Our goal here is to obtain some natural conditions on the curvature to ensure transience or recurrence of kinetic Brownian motion; if transient to determine its rate of escape, the existence or not of an asymptotic escape angle, and if possible to characterize the Poisson boundary of the process. We compare these conditions and results to those obtained for Brownian motion on rotationally invariant manifolds, in order to highlight the similarities and differences between the two processes.

\medskip

We recall in Section \ref{SectionRotationnalyInvManifolds} the setting of rotationally invariant manifolds and present our results on the asymptotic behaviour of kinetic Brownian motion in Section \ref{SectionResults}, comparing theorem to their Brownian analogue. As said above, we take advantage of this special symmetric setting to exhibit some subdiffusions that are easier to analyse than the full process. The radial component, together with its derivative, provide a $2$-dimensional subdiffusion whose asymptotic behaviour is thouroughly studied in Section \ref{sec.radial}. As a result, we are able to decide on the recurrent/transient character of kinetic Brownian motion, and in the latter case, to exhibit a whole range of behaviours for its escape rate, depending on the expansion factor in the warped product defining the metric. The angular component is studied in Section \ref{sec.angular}, while we show that the asymptotic angle generates the invariant sigma field of the process under rather general geometric assumptions. Working with a hypoelliptic diffusion makes that task highly non-trivial.

\subsection{Rotationally invariant manifolds}
\label{SectionRotationnalyInvManifolds}

Let us recall that a rotationally invariant Riemannian manifold $(\mathcal M, g)$ is a Riemannian manifold such that $\mathcal M$ or $\mathcal M$ minus a point, admits a global polar coordinates system in which the metric $g$ has a warped product structure. Namely, as illustrated in Figure \ref{fig.rotinv} below, $(\mathcal M, g)$ is a rotationnaly invariant if $\mathcal M$ or $\mathcal M \backslash \{ o \}$, where $o$ is a point in $\mathcal M$,  is diffeomorphic to the product  $(0, +\infty ) \times \mathbb{S}^{d-1}$ endowed with its standard polar coordinates $(r, \theta)$ in which the Riemannian metric $g$ takes the form
\[
g= dr^2 + f(r)^2 d\theta^2, \quad (r, \theta) \in  (0, +\infty ) \, \times \mathbb{S}^{d-1},
\]
where $f$ is a positive smooth function on  $(0, +\infty )$, say at least $C^2$, satisfying the classical assumptions $f(0)=0$ and $f'(0)=1$, see e.g.  \cite{stoker} p. 179-183, and $d\theta$ stands for the metric on $\mathbb{S}^{d-1}$ inherited from the Euclidean metric of $\R^d$. We refer to \cite{greene} for a detailed study of such  manifolds. Since we are interested in non compact manifolds here, we shall suppose in the sequel that $f(r)$ goes to infinity as $r$ increases indefinitely. {\bf We shall assume in the sequel that $d \geq 3$}; the case $d=2$ presents no difficulty but requires a separate treatment, see Remark \ref{rem.d2} in Section \ref{sec.reduction} below, and Appendix A.

\begin{figure}[ht]
\includegraphics[scale=0.4]{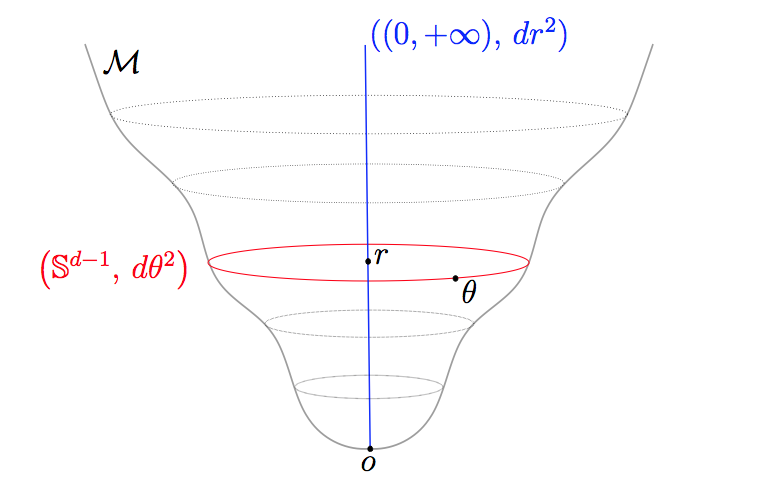}
\caption{Global polar coordinates system on a rotationally invariant manifold.}
\label{fig.rotinv}
\end{figure}

The radial curvature $K$ on a rotationally invariant manifold $\mathcal M$ is given by 
\[
K(r) := -\frac{f''(r)}{f(r)}, 
\]
so that the curvature properties of the manifold are intimately related to the convexity of the warping function $f$. For example, global convexity of $f$ is equivalent to global non positive curvature, and in that case we have $f'(r)/f(r) \geq 1/r$ for all $r > 0$, in particular $f$ is increasing. Indeed, we have 
\[
\left(  \frac{f'}{f}(r)\right)' = \frac{f''}{f}(r) - \left(  \frac{f'}{f}(r)\right)^2, \;\; 
\hbox{so that} \;\; f''(r) \geq 0 \Longrightarrow - \left(  \frac{f'}{f}(r)\right)'  \leq \left(  \frac{f'}{f}(r)\right)^2
\] 
and integrating the ratio, since $f(0)=0$ and $f'(0)=1$, we get $f'(r)/f(r) \geq 1/r$. Moreover, if the warping function $f$ is $\log-$concave, 
the logarithmic derivative $f'/f$ is decreasing and converge to $\ell \in [0, +\infty)$ when $r$ goes to infinity. In that case, for $\varepsilon>0$ and for $r$ sufficiantly large, we have thus $-(\ell+\varepsilon)^2 \leq K(r) \leq 0$. In particular, if $f$ is $\log-$concave and $\ell=0$, the rotationally invariant manifold $(\mathcal M,g)$ is asymptotically flat.

\begin{exe}Here are some classical examples of rotationnaly invariant manifolds and their associated radial curvature.
\begin{enumerate}
\item  The $d$-dimensional Euclidean space minus a point is of course a rotationally invariant manifold. It corresponds to the choice $f(r)= r$ for the warping function and has constant zero curvature. \vspace{0.1cm}

\item The $d$-dimensional hyperbolic space $\Hyp^d$ minus a point, seen as the pseudo-sphere in Minkowski space $\mathbb R^{1,d}$, corresponds to the choice $f(r) = \sinh(r)$. The radial curvature is constant equal to $-1$. \vspace{0.1cm}

\item If $f$ is of polynomial growth, say $f(r)=r^{\beta}$ for $r$ large enough, then the curvature $K(r) = -\frac{\beta(\beta-1)}{r^2}$ has the sign of $1-\beta$ et goes to zero as $r$ goes to infinity, i.e. the manifold is asymptotically flat. \vspace{0.1cm}

\item If $f$ is of subexponential growth, say $f(r)=\exp\left(r^{\beta}\right)$ with $0<\beta<1$ for $r$ large enough, then the curvature 
\[
K(r) = -\frac{\beta(\beta-1)}{r^{2-\beta}} - \frac{\beta^2}{r^{2(1-\beta)}}
\] 
is asymptotically negative and goes to zero as $r$ goes to infinity. \vspace{0.1cm}

\item If the growth of $f$ is more than exponential, e.g. $f(r)=\exp\left(r^{\beta}\right)$ with $\beta>1$ for $r$ large enough, then radial curvature is negative and goes to minus infinity as $r$ goes to infinity.
\end{enumerate}

\end{exe}

\subsection{Statement of the results}
\label{SectionResults}

Before expliciting the long time asymptotic behaviour of kinetic Brownian motion on $\mathcal M$ in terms of the geometry of $\mathcal{M}$, let us recall the corresponding results for the classical Brownian motion $(X_t)_{0 \leq t < \tau}$ on $\mathcal M$.

\subsubsection{Asymptotics of classical Brownian motion}\label{sec.asymp}
On a rotationally invariant manifold, the classical Brownian motion fortunately admits a one-dimensional diffusion, namely the radial subdiffusion $(r_t)_{0 \leq t < \tau}$ when written in polar coordinates $(r_t,\theta_t)_{0 \leq t < \tau}$. Moreover, the angular component $(\theta_t)_{0 \leq t < \tau}$ can be shown to be a time-changed spherical Brownian motion on $\mathbb S^{d-1}$, the clock being a simple additive functional of the radial process, see Example 3.3.3 of \cite{hsu}.  The following classical results are thus simple consequences of the study of this radial one-dimensional diffusion, see for instance \cite{marcanton2} and the references therein. 

\begin{theo}\label{theo.classical}
Let $(r_t,\theta_t)_{0 \leq t < \tau}$ stand for the polar decomposition of the classical Brownian motion with generator $\frac{1}{2} \Delta_{\mathcal M}$ on a rotationally invariant manifold $(\mathcal M, g)$ parametrized by $\left( (0,+\infty)\times \mathbb S^{d-1}, dr^2+f^2(r) d \theta^2\right)$.
\begin{enumerate}
\item The lifetime $\tau$ of the process is almost surely finite or infinite; it is finite almost surely if and only if
\[
\int_1^{+\infty} f^{d-1}(r) \left(\int_{r}^{+\infty} f^{1-d}(\rho)d\rho\right)dr<+\infty.
\]
\item The radial process $r_t$ is transient a.s. if and only if
\[
\int_{1}^{+\infty} f^{1-d}(r)dr <+\infty.
\]
\item The angular process $\theta_t$ converges a.s. to a random point $\theta_{\infty}$ in $\mathbb S^{d-1}$ if and only if
\[
\int_1^{+\infty} f^{d-3}(r) \left( \int_{r}^{+\infty} f^{1-d}(\rho)d\rho\right)dr <+\infty.
\]
\end{enumerate}
\end{theo}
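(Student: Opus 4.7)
The plan is to exploit the rotational symmetry in order to reduce the three claims to elementary properties of a one-dimensional diffusion. Writing the Laplace-Beltrami operator in polar coordinates gives the decomposition
\[
\tfrac{1}{2}\Delta_{\mathcal M} = \tfrac{1}{2}\partial_r^2 + \tfrac{d-1}{2}\,\frac{f'(r)}{f(r)}\,\partial_r + \frac{1}{2f(r)^2}\Delta_{\bbS^{d-1}}.
\]
It follows that $(r_t)$ is itself a one-dimensional diffusion on $(0,+\infty)$ with generator $\tfrac{1}{2}\partial_r^2 + \tfrac{d-1}{2}\,(f'/f)\,\partial_r$, and that the angular part is a time-changed spherical Brownian motion, $\theta_t = B^{\bbS^{d-1}}_{C_t}$, with clock $C_t = \int_0^t f(r_s)^{-2}\,ds$, as recalled in \cite{hsu}.

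For this radial diffusion, the scale derivative and the speed measure density are given, up to multiplicative constants, by $s'(r) = f^{1-d}(r)$ and $m(r) = f^{d-1}(r)$. Note that $r=0$ is a natural boundary, since for $d\geq 3$ one has $\int_0 f^{1-d}(\rho)\,d\rho = +\infty$, so only the behaviour at $+\infty$ matters. Item (2) is then the classical transience criterion $s(+\infty)<+\infty$. Item (1) follows from Feller's test for explosion: in the transient case the boundary $+\infty$ is reached in finite time almost surely iff $\int (s(+\infty)-s(r))\,m(dr)<+\infty$, and Fubini's theorem recasts this condition as the integral appearing in item (1); the zero-one dichotomy on $\{\tau<+\infty\}$ is a consequence of the Blumenthal zero-one law combined with the strong Markov property.

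For item (3), since $d\geq 3$, Brownian motion on $\bbS^{d-1}$ does not converge almost surely, so $\theta_t = B^{\bbS^{d-1}}_{C_t}$ admits a limit in $\bbS^{d-1}$ if and only if $C_\infty < +\infty$ almost surely. I would first observe that if $(r_t)$ is recurrent or explodes in finite time, then $C_\infty = +\infty$ a.s. for trivial reasons, so one may restrict to the transient case. There, the Green function of the radial process against the speed measure is explicitly $G(r_0,y) = s(+\infty) - \max\bigl(s(r_0),s(y)\bigr)$, and a direct calculation yields
\[
\EE_{r_0}[C_\infty] = \int_0^{+\infty} G(r_0,y)\,f^{-2}(y)\,m(dy).
\]
Near $+\infty$ the integrand behaves like $f^{d-3}(y)\int_y^{+\infty} f^{1-d}(\rho)\,d\rho$, so finiteness of $\EE_{r_0}[C_\infty]$ is exactly the criterion of item (3), while finiteness near $0$ is automatic for $d\geq 3$.

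The main obstacle I expect lies precisely in this last item: one must argue that divergence of the above integral forces $C_\infty = +\infty$ almost surely, ruling out the possibility of a.s. finiteness with infinite mean. This follows by combining the fact that $\{C_\infty<+\infty\}$ is a tail event for the radial diffusion, hence of probability $0$ or $1$, with the observation that the potential $V(x)=\EE_x[C_\infty]$ is an excessive function for an irreducible regular one-dimensional diffusion and is therefore either finite everywhere or identically $+\infty$; the dichotomy then propagates from $\EE$ to the almost sure behaviour of $C_\infty$.
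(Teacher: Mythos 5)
The paper does not actually prove Theorem \ref{theo.classical}: it quotes it from the literature (\cite{marcanton2}, Example 3.3.3 of \cite{hsu}), and your reduction is precisely the standard route those references take — skew-product decomposition of $\tfrac12\Delta_{\mathcal M}$, radial diffusion with scale density $s'(r)=f^{1-d}(r)$ and speed density $f^{d-1}(r)$, inaccessibility of $0$, Feller's explosion test for item (1), $s(+\infty)<+\infty$ for item (2), and the clock $C_t=\int_0^t f(r_s)^{-2}\,ds$ for item (3). Items (1) and (2) are correct as written (for the zero--one dichotomy in (1) the relevant fact is the triviality of the invariant sigma-field of the radial diffusion, or the refined form of Feller's test, rather than Blumenthal's law, but this is cosmetic).

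Item (3) has two issues. A minor one: if the radial process explodes in finite time then $C_{\tau-}=\int_0^\tau f(r_s)^{-2}ds$ is \emph{finite}, not infinite (the integrand is bounded along the path and $\tau<+\infty$), so $\theta_t$ does converge in that case; this is consistent with the stated criterion, since convergence of the integral in (1) forces that of the integral in (3), but your parenthetical claim is backwards. The genuine gap is the last step: from $\E_{r_0}[C_\infty]=+\infty$ you cannot conclude that $C_\infty=+\infty$ almost surely — a nonnegative variable can be a.s.\ finite with infinite mean — and the assertion that the dichotomy ``propagates'' from the expectation to the almost sure behaviour is exactly the point that needs proof; the excessive-function argument only shows that $V=\E_\cdot[C_\infty]$ is finite everywhere or nowhere, which says nothing about $\Prob(C_\infty<+\infty)$ when $V\equiv+\infty$. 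The clean fix is the device the paper itself uses for the kinetic analogue in the proof of Proposition \ref{pro.convangle}: $C_{\tau-}$ is the lifetime of the radial diffusion time-changed by the additive functional $C_t$; the time-changed diffusion has the same scale $s$ and speed measure $f^{-2}(y)\,m(dy)=f^{d-3}(y)\,dy$, so Feller's test applied to \emph{it} decides, with an almost-sure dichotomy, whether that lifetime is finite, the criterion being $\int^{+\infty} f^{d-3}(y)\big(\int_y^{+\infty}f^{1-d}(\rho)\,d\rho\big)\,dy<+\infty$ (see Theorem 1.5, p.~212 of \cite{pinsky}). This yields both directions of item (3) at once and turns your Green-function computation into a corollary rather than the engine of the proof.
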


When the process is transient and admits a random escape direction $\theta_{\infty} \in \mathbb S^{d-1}$, it can be shown that this direction is the only non-trivial asymptotic variable, namely the Poisson boundary of the process coincides almost surely with $\sigma(\theta_{\infty})$, see e.g. Theorem 4 of \cite{Devissage} for a simple proof.  Recall that the Poisson boundary of a Markov process  can be defined either as its invariant sigma field or equivalently, via the classical bijection, as the set bounded harmonic functions of its infinitesimal generator.

\begin{theo}\label{theo.poisson}
Let $(r_t,\theta_t)_{t \geq 0}$ stand for the polar decomposition of the classical Brownian motion with generator $\frac{1}{2} \Delta_{\mathcal M}$ on a rotationally invariant manifold $(\mathcal M, g)$ parametrized by $\left( (0,+\infty)\times \mathbb S^{d-1}, dr^2+f^2(r) d \theta^2\right)$ such that
\[
\int_1^{+\infty} f^{d-3}(r) \left( \int_{r}^{+\infty} f^{1-d}(\rho)d\rho\right)dr <+\infty.
\]
Then, the Poisson boundary of the process $(r_t,\theta_t)_{t \geq 0}$ coincides almost surely with $\sigma(\theta_{\infty})$.
\end{theo}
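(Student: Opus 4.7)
The plan is to apply the d\'evissage method of \cite{Devissage}, as recalled at the end of Section \ref{sec.kasymp}, to the natural splitting of Brownian motion into $z_t = (r_t,\theta_t)$, with the radial component $(r_t)$ playing the role of the subdiffusion. First I would recall the standard skew-product description of Brownian motion on a rotationally invariant manifold: $(r_t)$ is autonomous, solution of a one-dimensional SDE with drift $\tfrac{d-1}{2}\tfrac{f'}{f}(r_t)$, and the angular part can be written as the time change $\theta_t = \Theta\!\left(\int_0^t f^{-2}(r_s)\,ds\right)$, where $\Theta$ is a Brownian motion on $\bbS^{d-1}$ independent of $(r_t)$. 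The integrability hypothesis of the theorem implies, via the explicit Green function of the radial process, that the clock $A_\infty := \int_0^\infty f^{-2}(r_s)\,ds$ is almost surely finite, so that $\theta_\infty := \Theta(A_\infty)$ is a well-defined $\bbS^{d-1}$-valued random variable.

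Next I would verify the three hypotheses of the d\'evissage criterion. Almost sure convergence of $\theta_t$ is exactly the content of Theorem \ref{theo.classical}; the same integrability condition also forces transience of $(r_t)$, hence $r_t \to \infty$ almost surely, so that the invariant $\sigma$-field of the one-dimensional transient diffusion $(r_t)$ is trivial. What remains to check is the technical compatibility condition from \cite{Devissage}: loosely speaking, one needs that the conditional law of $\theta_\infty$ given $\mcF_t$ depends continuously on $(r_t,\theta_t)$, and that this conditional law is rich enough so that no non-trivial asymptotic event on $(\theta_t)$ can be read off from the trajectory of $(r_t)$ alone. Both properties follow painlessly from the independence of $\Theta$ and $(r_t)$, and from the fact that the residual clock $A_\infty - A_t$ is, conditionally on $r_t$, a strictly positive random variable with full support in $(0,\infty)$; so conditionally on the radial history the law of $\theta_\infty$ is absolutely continuous with respect to a rotation-invariant measure on $\bbS^{d-1}$.

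The main obstacle in this plan is the translation of these rather soft independence and symmetry arguments into the precise quantitative coupling/irreducibility statement required by the abstract d\'evissage theorem of \cite{Devissage}; everything else is essentially book-keeping. Once this verification is in place, d\'evissage yields that the invariant $\sigma$-field of $(r_t,\theta_t)$ is generated, modulo null sets, by $\theta_\infty$ together with the trivial invariant $\sigma$-field of $(r_t)$. The Poisson boundary therefore coincides almost surely with $\sigma(\theta_\infty)$, as claimed.
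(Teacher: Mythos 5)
Your plan is exactly the paper's: the authors give no proof of this statement beyond pointing to Theorem 4 of \cite{Devissage}, i.e.\ precisely the d\'evissage argument you sketch, with the skew-product decomposition, transience of the radial diffusion giving a trivial invariant field, and finiteness of the clock $\int_0^\infty f^{-2}(r_s)\,ds$ (checked through the Green function, which is where the integrability hypothesis enters) producing $\theta_\infty$. The only place your description drifts from what actually has to be verified is the ``technical compatibility condition'': the remaining hypotheses of Theorem \ref{theo.devA} are not a coupling/irreducibility statement about the conditional law of $\theta_\infty$, but rather (i) equivariance of $\frac{1}{2}\Delta_{\mathcal M}$ under the $SO(d)$-action, immediate from rotational invariance of the metric, and (ii) continuity of bounded harmonic functions, immediate from ellipticity; moreover, since $\mathbb{S}^{d-1}$ is a homogeneous space and not a Lie group, one must either invoke the $G/K$ version (Theorem B of \cite{Devissage}) or lift the angular part to a time-changed Brownian motion on $SO(d)$ --- both painless here thanks to the independence in the skew-product, but worth stating, as this is exactly the point that becomes delicate for the kinetic process in Section \ref{sec.poisson}.
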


\subsubsection{Asymptotics of kinetic Brownian motion}\label{sec.kasymp}

The following results sum up the long time asymtotics of kinetic Brownian motion on rotationally invariant manifolds. For simplicity and clarity of the statements, we give here the results under non-optimal assumptions. More precise and stronger results are given in Sections \ref{sec.radial}, \ref{sec.angular} and \ref{sec.poisson}, along with the proofs of the corresponding statements. \par

\medskip

As shown in Proposition \ref{ThrmNonExplosion}, the lifetime of kinetic Brownian motion on any complete Riemannian manifold is almost surely infinite. The recurrent and transient character of kinetic Brownian motion are clarified by the following statement.

\begin{theo}\label{theo.radial}
Let $(z_t)_{t \geq 0}=(x_t, \dot{x}_t)_{t \geq 0}$ be the kinetic Brownian motion with values in the unitary tangent bundle $T^1 \mathcal M$ of a rotationally invariant manifold $(\mathcal M, g)$ parametrized by $\left( (0,+\infty)\times \mathbb S^{d-1}, dr^2+f^2(r) d \theta^2 \right)$ and let $(x_t)_{t \geq 0}=(r_t,\theta_t)_{t \geq 0}$ be the polar decomposition of the first projection.
\begin{enumerate}
\item If $K \leq 0$, then the process $r_t$ is transient almost surely.
\item If $K \geq 0$ the process $r_t$ is transient almost surely if and only if 
\[
\int_{1}^{+\infty} f^{1-d}(r)dr <+\infty.
\]
\end{enumerate}
\end{theo}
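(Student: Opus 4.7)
First, I would use the rotational invariance of $(\mathcal M,g)$ together with the warped-product form $g=dr^2+f(r)^2\,d\theta^2$ to reduce the problem to a two-dimensional subdiffusion. Decomposing the velocity as $\dot x_t=\dot r_t\,\partial_r+(\text{tangential part})$, the radial component $\dot r_t$ obeys a closed SDE, obtained from two ingredients: the geodesic equation on the warped product, $\ddot r=(f'/f)(1-\dot r^2)$ (which follows from $\ddot r-ff'|\dot\theta|^2=0$ and the unit-speed relation $\dot r^2+f^2|\dot\theta|^2=1$), and the fact that $\dot x_t$ performs Brownian motion at speed $\sigma^2$ on the unit sphere of $T_{x_t}\mathcal M$, whose orthogonal projection onto $\partial_r$ is a Jacobi-type diffusion on $[-1,1]$. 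Putting the two together yields
\[
dr_t=\dot r_t\,dt,\qquad d\dot r_t=\left[\frac{f'(r_t)}{f(r_t)}(1-\dot r_t^2)-\frac{\sigma^2(d-1)}{2}\dot r_t\right]dt+\sigma\sqrt{1-\dot r_t^2}\,dB_t,
\]
and the generator $\mathcal L_{r,\dot r}$ of this 2D system is hypoelliptic on $(0,\infty)\times(-1,1)$.

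For case (1), with $K\leq 0$, the convexity of $f$ together with $f(0)=0$, $f'(0)=1$ gives $f'(r)/f(r)\geq 1/r>0$ for all $r>0$, since $f(r)=\int_0^r f'(s)\,ds\leq r f'(r)$. A one-line computation then shows that
\[
Z_t:=r_t+\frac{2}{\sigma^2(d-1)}\,\dot r_t
\]
is a continuous submartingale with
\[
\mathcal L_{r,\dot r}Z=\frac{2}{\sigma^2(d-1)}\cdot\frac{f'(r)}{f(r)}(1-\dot r^2)\geq 0.
\]
Since $|Z_t-r_t|$ is uniformly bounded by $2/(\sigma^2(d-1))$, transience of $(r_t)$ is equivalent to $Z_t\to+\infty$. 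Arguing by contradiction: on the event $\{\sup_t r_t\leq R\}$, the submartingale $Z_t$ is bounded above, hence converges, and its compensator must be integrable, i.e.\ $\int_0^{+\infty}(f'/f)(r_s)(1-\dot r_s^2)\,ds<+\infty$ almost surely on that event. But on this event $(f'/f)(r_s)\geq 1/R$, and hypoellipticity plus a standard controllability/support argument applied to $(r_s,\dot r_s)$ in the compact region $(0,R]\times[-1,1]$ force $\dot r_s$ to spend a positive fraction of time in $\{|\dot r|<1/2\}$, making $\int_0^\infty(1-\dot r_s^2)\,ds=+\infty$ --- a contradiction.

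For case (2), with $K\geq 0$, $f$ is concave, $f'/f$ is nonincreasing, and $f$ has at most linear growth. The condition $\int^\infty f^{1-d}(r)\,dr<+\infty$ is exactly the scale-function criterion for the 1D radial part of classical Brownian motion (whose generator on $r$ is $\tfrac12\partial_r^2+\tfrac{d-1}{2}(f'/f)\partial_r$). The strategy is to lift this 1D criterion to the 2D setting: one seeks auxiliary functions of the form $H(r,\dot r)=h(r)+g(r)\dot r+k(r)\dot r^2$, where $h$ is chosen so that the pure-$r$ part of $\mathcal L_{r,\dot r}H$ reproduces the classical scale equation and $g,k$ are tuned so that the remaining cross-terms in $\dot r$ have a definite sign. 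The concavity $f''\leq 0$ is precisely what provides the monotonicity needed to control these cross-terms. Khasminskii-type criteria then conclude: if $\int^\infty f^{1-d}<+\infty$, the resulting $H$ is bounded, $\mathcal L$-superharmonic, and tends to $0$ as $r\to+\infty$, yielding transience; if $\int^\infty f^{1-d}=+\infty$, the resulting $H$ is a Lyapunov function satisfying $H(r,\cdot)\to+\infty$ and $\mathcal L H\leq 0$ outside a compact, yielding recurrence.

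The main obstacle is the hypoellipticity of the 2D subdiffusion: the canonical 1D tools (scale function and speed measure) are not directly available, and one must hand-build the right auxiliary functions of $(r,\dot r)$. Case (1) is easier because the submartingale $Z_t$ comes for free from the inequality $f'/f\geq 1/r$, whereas case (2) is delicate since the borderline integral $\int^\infty f^{1-d}$ has to be matched exactly; the concavity $f''\leq 0$ is the structural hypothesis that makes the ansatz for $H$ work.
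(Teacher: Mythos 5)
Your reduction to the two-dimensional subdiffusion $(r_t,\dot r_t)$ and the auxiliary combination $Z_t=r_t+\tfrac{2}{\sigma^2(d-1)}\dot r_t$ are exactly the right objects (this $Z$ is, up to scaling and a time change, the process $u_t=\widetilde\sigma\rho_t+\dot\rho_t$ used in the paper). But your argument for case (1) has a genuine gap. Ruling out the events $\{\sup_t r_t\leq R\}$ for every $R$ only yields $\limsup_t r_t=+\infty$ almost surely; it does not exclude the recurrent scenario in which $r_t$ oscillates, returning to compact sets infinitely often. A bounded-above submartingale converges, but an unbounded submartingale need not tend to $+\infty$ (Brownian motion is the basic counterexample), and your contradiction only addresses the bounded case. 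A telling symptom is that your argument never uses $d\geq 3$: applied verbatim to $d=2$ with $f(r)=r$ (so $K\equiv 0$), it would ``prove'' transience of the radial part of kinetic Brownian motion in the Euclidean plane, which is false. The missing step is precisely how the paper concludes: one splits according to whether $\langle M\rangle_\infty=\sigma^{-2}\int_0^\infty(1-\dot r_s^2)\,ds$ is finite or infinite. If finite, the martingale part converges and an It\^o computation on $\dot r_t^2$ forces the compensator $J_t=\int_0^t(f'/f)(r_s)(1-\dot r_s^2)\,ds$ to diverge, whence $r_t\to\infty$. If infinite, one time-changes by $D_t=\sigma^2\langle M\rangle_t$, after which $u$ solves $du_t=\sigma^{-2}(f'/f)\bigl(\widetilde\sigma^{-1}(u_t-\dot\rho_t)\bigr)dt+dB_t$, and the bound $f'/f\geq 1/r$ allows a pathwise comparison with a Bessel process of dimension $d\geq 3$, whose transience is what finally drives $u_t\to+\infty$. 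Without that comparison (or some substitute quantifying that the drift wins against the fluctuations), transience does not follow.

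For case (2), your plan is a sketch rather than a proof: the functions $H(r,\dot r)=h(r)+g(r)\dot r+k(r)\dot r^2$ are never constructed, and it is not verified that the cross-terms can be given a sign while matching the borderline integral $\int^\infty f^{1-d}$ exactly. The paper avoids this hand-construction entirely by reusing the time-changed process $u_t$: log-concavity of $f$ (which follows from $K\geq 0$) lets one absorb the bounded shift $\dot\rho_t\in[-1,1]$ in the argument of $f'/f$ and sandwich $u_t$ between solutions of genuinely one-dimensional SDEs $d\widetilde v_t=\tfrac{d-1}{2}(g'/g)(\widetilde v_t)\,dt+dB_t$, for which the classical scale-function criterion gives transience if and only if $\int^\infty g^{1-d}<\infty$. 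I would recommend adopting that comparison route; it simultaneously repairs case (1) and completes case (2).
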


\begin{remark}
Note that the condition ensuring the transience of the radial component of the kinetic Brownian motion is the same as the one for the classical Brownian motion. Indeed, the strategy of the proof of Theorem \ref{theo.radial}, which is given in Section \ref{sec.radial} below, is to use a change of variables and a time change for the kinetic Brownian motion to make it look ``similar'' to its classical analogue and then use comparison results.
\end{remark}

The next proposition gives a sufficient condition for the convergence of the angular component of kinetic Brownian motion in terms of the warping function: for simplicity, we assume here that it is $\log-$concave. We strongly believe that this condition is necessary but despite our efforts, we did not manage to give a proof of the corresponding statement, see Remark \ref{rem.convangle} below.

\begin{prop}\label{prop.angular}
Let $(z_t)_{t \geq 0}=(x_t, \dot{x}_t)_{t \geq 0}$ stand for kinetic Brownian motion with values in the unitary tangent bundle $T^1 \mathcal M$ of a rotationally invariant manifold $(\mathcal M, g)$, parametrized by $\left( (0,+\infty)\times \mathbb S^{d-1}, dr^2+f^2(r) d \theta^2\right)$. Let $(r_t,\theta_t)_{t \geq 0}$  stand for the polar coordinates of $(x_t)_{t\geq 0}$. Suppose that the warping function $f$ is $\log-$concave and satisfies
\[
\int_1^{+\infty} f^{d-2}(r) \left( \int_{r}^{+\infty} f^{1-d}(\rho)d\rho\right)dr <+\infty.
\]
Then, the angular process $(\theta_t)_{t\geq 0}$ converges almost surely to a random point $\theta_\infty$ on $\mathbb S^{d-1}$.
\end{prop}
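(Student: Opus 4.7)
The plan is to turn the convergence of $(\theta_t)_{t\geq 0}$ into the almost sure finiteness of the Riemannian length of its trajectory on $\s$, and then to control that length by a Green function estimate on the radial subdiffusion.

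First, in the polar chart $(0,+\infty)\times\s$ the unit-speed constraint on $\dot x_t$ reads $\dot r_t^{\,2}+f(r_t)^2|\dot\theta_t|_{\s}^{\,2}=1$, so that writing $\dot r_t=\cos\alpha_t$ for some angle $\alpha_t\in[0,\pi]$ one gets
\[
|\dot\theta_t|_{\s}\;=\;\frac{\sin\alpha_t}{f(r_t)}\;\leq\;\frac{1}{f(r_t)}.
\]
Consequently, the total spherical length travelled by $\theta_\cdot$ up to time $t$ satisfies $L_t:=\int_0^t|\dot\theta_s|_{\s}\,ds\leq \int_0^t ds/f(r_s)$, and since $\s$ is complete the almost sure convergence of $\theta_t$ to some $\theta_\infty\in\s$ is guaranteed as soon as $L_\infty<+\infty$ almost surely.

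Secondly, by non-negativity and Fubini it is enough to show that
\[
\E\!\left[\int_0^{+\infty}\frac{ds}{f(r_s)}\right]\;=\;\int_{(0,+\infty)}\frac{G(\rho)}{f(\rho)}\,d\rho\;<\;+\infty,
\]
where $G(\rho)\,d\rho$ stands for the expected occupation measure at level $\rho$ of the two-dimensional Markovian subdiffusion $(r_t,\dot r_t)=(r_t,\cos\alpha_t)$ analysed in Section \ref{sec.radial} in the course of the proof of Theorem \ref{theo.radial}. Under the log-concavity assumption on $f$, I would compare $(r_t,\dot r_t)$ with a one-dimensional transient diffusion on $(0,+\infty)$ whose scale function is $\int^\cdot f^{1-d}$ and whose speed measure is proportional to $f^{d-1}\,dr$, yielding an estimate of the form $G(\rho)\leq C_d\,f(\rho)^{d-1}\int_\rho^{+\infty}f(\tau)^{1-d}\,d\tau$ for $\rho$ large, after ergodic averaging of $\dot r_t$ along the fibre over $r_t$. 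Inserting this bound gives
\[
\E\!\left[\int_0^{+\infty}\frac{ds}{f(r_s)}\right]\;\leq\;C_d\int_1^{+\infty}f(\rho)^{d-2}\left(\int_\rho^{+\infty}f(\tau)^{1-d}\,d\tau\right)d\rho,
\]
which is finite by the hypothesis of the proposition.

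The main obstacle is the Green function estimate for the hypoelliptic two-dimensional subdiffusion $(r_t,\dot r_t)$: in sharp contrast with classical Brownian motion, where the radial coordinate is itself a one-dimensional diffusion to which the usual scale-and-speed formulas apply, here $r_t$ is coupled to $\dot r_t$, and $G(\rho)$ is only accessible after averaging out the angular velocity along its fibre invariant measure. The log-concavity of $f$ enters precisely at this step, as it forces $f'/f$ to be non-increasing and bounded, which is the ingredient needed both to transfer the one-dimensional Feller-type Green function estimates to the two-dimensional hypoelliptic setting, and to justify the averaging of $\dot r_t$ at large $r_t$ through an approximate ergodic theorem for the kinetic process on the fibre. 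This is the same technical core as in the proof of Theorem \ref{theo.radial}, so it is natural to carry out both arguments together in Section \ref{sec.radial}.
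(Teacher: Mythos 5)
Your first reduction is sound and matches the paper's: from the metric relation \eqref{eqn.metric} one has $|\dot\theta_t|=\sqrt{1-\dot r_t^2}/f(r_t)\leq 1/f(r_t)$, so $\theta_\cdot$ has finite length on the compact sphere, hence converges, as soon as $\int_0^\infty ds/f(r_s)<\infty$ almost surely. (The paper works with the clock $C_t=\int_0^t\sqrt{1-\dot r_s^2}/f(r_s)\,ds$ and shows in Proposition \ref{pro.convangle}, via It\^o computations on $\sqrt{1-\dot r^2}/f(r)$ and $(1-\dot r^2)/f(r)$, that the finiteness of your cruder majorant is in fact a.s.\ \emph{equivalent} to that of $C_\infty$; this equivalence is what supports the necessity discussion in Remark \ref{rem.convangle}, but for the sufficiency claimed here your bound is enough.)

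The second half of your argument, however, has a genuine gap: the estimate $G(\rho)\leq C_d\,f(\rho)^{d-1}\int_\rho^{+\infty}f^{1-d}$ for the expected occupation density of $r_t$ is precisely the whole difficulty, and you only describe what you ``would'' do to obtain it. The radial subdiffusion $(r_t,\dot r_t)$ is two-dimensional and hypoelliptic, $r_t$ alone is not Markov, and no scale/speed formula applies to it; the ``ergodic averaging of $\dot r_t$ along the fibre'' that you invoke is not a black box one can cite, and making it quantitative enough to produce a Green function bound valid for all large $\rho$ is at least as much work as the proposition itself. Moreover your route aims at finiteness of the \emph{expectation} $\E[\int_0^\infty ds/f(r_s)]$, which is a priori stronger than the a.s.\ finiteness actually needed; for one-dimensional transient diffusions the two are equivalent, but that equivalence is again something to prove in this degenerate two-dimensional setting. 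The paper avoids both issues by a purely pathwise argument: after the time change $D_t=\sigma^2\int_0^t(1-\dot r_s^2)ds$, which is a.s.\ asymptotically linear (Remark \ref{rem.horloge}), the log-concavity of $f$ yields a two-sided comparison (via the auxiliary process $u_t=\widetilde\sigma\rho_t+\dot\rho_t$ of Propositions \ref{pro.transneg} and \ref{pro.translog}) of the time-changed radius with the one-dimensional diffusion $dy_t=\tfrac{d-1}{2}(f'/f)(y_t)dt+dB_t$, and the a.s.\ finiteness of $\int^\infty dt/f(y_t)$ is then read off Feller's explosion test for a further time-changed diffusion, giving exactly the criterion $I_d(f)<+\infty$. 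To complete your proof you would need either to carry out the occupation-density estimate rigorously, or to replace it by a comparison-plus-Feller argument of this type.
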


\begin{remark}
Note that this time, we obtain a convergence criterion which is different from the one concerning the classical Brownian motion. Namely, the exponent  $d-3$ of the classical case is replaced by $d-2$ here. Therefore, if the integrability condition of Proposition \ref{prop.angular} is indeed necessary, there are situations where the angle $\theta_t$ is convergent for the classical Brownian motion and does not converge for the kinetic Brownian motion. For example, if the warping function is of the form $f(r)=r^{\beta}$ with $\beta>0$, then the angular component converges in the classical case if and only if $\beta>1$, whereas it would converge in the kinetic case if and only if $\beta>2$.
\end{remark}

Let us conclude the synthetic description of the long time asymptotic behaviour of kinetic Brownian motion by expliciting its Poisson boundary, as we did in the Brownian case, in Theorem \ref{theo.poisson} above. 

\begin{theo}\label{theo.kpoisson}
Let $(z_t)_{t \geq 0}=(x_t, \dot{x}_t)_{t \geq 0}$ stand for kinetic Brownian motion with values in the unitary tangent bundle $T^1 \mathcal M$ of a rotationally invariant manifold $(\mathcal M, g)$, parametrized by $\left( (0,+\infty)\times \mathbb S^{d-1}, dr^2+f^2(r) d \theta^2\right)$. Let $(r_t,\theta_t)_{t \geq 0}$  stand for the polar coordinates of $(x_t)_{t\geq 0}$. Suppose that the warping function $f$ is $\log-$concave and satisfies
\[
\int_1^{+\infty} f^{d-2}(r) \left( \int_{r}^{+\infty} f^{1-d}(\rho)d\rho\right)dr <+\infty.
\]
Then the Poisson boundary of the whole process $(z_t)_{t \geq 0}$ coincides almost surely with $\sigma(\theta_{\infty})$.
\end{theo}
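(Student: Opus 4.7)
The strategy is to invoke the dévissage method of \cite{Devissage} in the form recalled at the end of Section \ref{sec.kasymp}. The rotational symmetry of the metric makes the generator $\mathcal{L}_\sigma$ invariant under the action of the isometry group of $\mathbb{S}^{d-1}$ on the angular coordinate, so that the radial pair $(r_t,\dot{r}_t)$ is a genuine two-dimensional subdiffusion of $z_t$, taking values in $(0,+\infty)\times[-1,1]$. The state $z_t\in T^1\mathcal{M}$ is then parameterized by $(r_t,\dot{r}_t)$ together with $\theta_t\in\mathbb{S}^{d-1}$ and the direction of the spherical velocity $\phi_t$ (a unit vector in $T_{\theta_t}\mathbb{S}^{d-1}$ when $\dot{r}_t\neq\pm 1$).

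The first step is to establish that the subdiffusion $(r_t,\dot{r}_t)$ has trivial invariant sigma field. By Theorem \ref{theo.radial}, the present hypotheses force $r_t\to+\infty$ almost surely, and the fine study of that subdiffusion carried out in Section \ref{sec.radial}, combined with the log-concavity of $f$ which turns the drift in $\dot{r}_t$ into a mean-reverting one, is expected to yield ergodicity of $\dot{r}_t$ along trajectories escaping to infinity, hence triviality of the Poisson boundary of the radial subdiffusion. The second step is already done: Proposition \ref{prop.angular} gives the almost sure convergence $\theta_t\to\theta_\infty$, so that $\sigma(\theta_\infty)$ is contained in the invariant sigma field of $z_t$.

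It then remains to feed these two ingredients into the dévissage criterion. Applied with $x_t=(r_t,\dot{r}_t)$ and $y_t$ the remaining degrees of freedom (containing $\theta_t$ and $\phi_t$), it will yield that the invariant sigma field of $z_t$ is generated by the invariant sigma field of the subdiffusion (trivial by step one) together with the almost sure limit $\theta_\infty$ of the angular component, provided a mild non-degeneracy assumption on the conditional law of the limit given the subdiffusion trajectory is checked. The main obstacle is precisely verifying this assumption, because $\phi_t$ need not converge and a priori could still carry asymptotic information; the idea is to exploit once more the rotational symmetry by representing the angular motion on $\mathbb{S}^{d-1}$ as a spherical Brownian-type process with an isotropic clock driven by $(r_s,\dot{r}_s)_{s\geq 0}$, in the same spirit as the classical decomposition described in Example 3.3.3 of \cite{hsu} for the Riemannian Brownian motion. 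This representation will make the conditional distribution of $\theta_\infty$ given the radial subdiffusion diffuse enough to apply the dévissage theorem, and will simultaneously show that $\phi_t$ carries no further asymptotic information beyond $\theta_\infty$. Combining the three ingredients yields that the invariant sigma field of $z_t$ coincides almost surely with $\sigma(\theta_\infty)$, as claimed.
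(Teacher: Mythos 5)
Your outline gets the two preliminary ingredients right: triviality of the Poisson boundary of the radial pair $(r_t,\dot r_t)$ (the paper proves this by an explicit shift coupling, using transience of $r_t$ and Harris recurrence of $\dot r_t$), and the almost sure convergence $\theta_t\to\theta_\infty$ under the integrability hypothesis. But the step where you ``feed these two ingredients into the dévissage criterion'' contains the real difficulty of the theorem, and your proposal does not resolve it.

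The dévissage theorem recalled in the paper (Theorem \ref{theo.devA}) requires two structural facts simultaneously: the first component must be a genuine subdiffusion, and the \emph{second} component must converge almost surely in a Lie group (there is no hypothesis of the form ``non-degeneracy of the conditional law of the limit''; the actual remaining hypotheses are equivariance of the generator under the group action and continuity of bounded harmonic functions). With your choice $x_t=(r_t,\dot r_t)$ and $y_t=(\theta_t,\phi_t)$, the convergence condition fails because $\phi_t=\dot\theta_t/|\dot\theta_t|$ does not converge. You cannot repair this by moving $\phi_t$ into the subdiffusion either, since the equation for $\dot\theta_t/|\dot\theta_t|$ contains a drift term proportional to $\theta_t$ (needed to keep $\theta_t$ on the sphere), so $(r_t,\dot r_t,\phi_t)$ is \emph{not} a subdiffusion; the paper points this out explicitly at the start of its proof. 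Your suggestion of viewing the angular motion as a time-changed spherical process is the right intuition, but it does not by itself produce a decomposition to which the theorem applies. The paper's resolution is to lift the process to $\R^+\times[-1,1]\times SO(d-1)\times SO(d)$: one writes the angular pair as the projection of $(b_t,g_t)$ with $e_t=g_tb_t$, where $b_t$ is a time-changed Brownian motion on $SO(d-1)$ (absorbing the non-convergent velocity direction) and $g_t$ solves a pure drift equation and converges to $g_\infty$ because the clock $C_t$ converges. The enlarged subdiffusion is then $(r_t,\dot r_t,b_t)$; its Poisson boundary triviality is a new coupling argument (combining the radial shift coupling with ergodicity of Brownian motion on $SO(d-1)$ after matching the clocks), not a consequence of step one alone. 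Dévissage applied upstairs gives $\sigma(g_\infty)$, and one finally descends to $\sigma(\theta_\infty)$ by an equivariance computation and an averaging over $SO(d-1)$. These three pieces --- the lift, the triviality of the enlarged subdiffusion's boundary, and the descent --- are the substance of the proof and are missing from your argument.
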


The proof of Theorem \ref{theo.kpoisson}, which is given in Section \ref{sec.poisson} below, is based on the \emph{d\'evissage method} introduced by two of the authors in \cite{Devissage}, and which allows to identify Poisson boundaries of diffusion processes with values in manifolds provided the latter admit  enough symmetries so that the original diffusion has some natural subdiffusions. See Section 4 of \cite{Devissage} for various examples of application of the method. 
\if{In our context, it consists in showing that the invariant sigma field of the full kinetic Brownian motion $(x_t, \dot{x}_t)_{t \geq 0} =(r_t, \theta_t, \dot{r}_t, \dot{\theta}_t)_{t \geq 0}$ can be decomposed as the ``sum'' of $\sigma(\theta_{\infty})$ and of the invariant sigma field of the subdiffusion $(r_t, \dot{r}_t)_{t \geq 0}$ which is shown to be trivial by shift coupling methods.}\fi
This method is particularly well suited here since the warped product structure of the manifold gives automatically the existence of lower dimensional subdiffusions; see Section \ref{sec.reduction} below. To help the reader by keeping the present paper as self-contained as possible, let us recall the main result of \cite{Devissage}.

\begin{theo}[{\bf D\'evissage method} -- Theorem A of \cite{Devissage}]\label{theo.devA}
Let $\mathcal N$ be a differentiable manifold and $G$ be a finite dimensional connected Lie group.
Let $(x_t,g_t)_{t \geq 0}$ be a diffusion process with values in $\mathcal N \times G$, starting from $(x,g) \in \mathcal N \times G$. Denote by $\mathbb P_{(x,g)}$ its law. 
Let us suppose that following conditions are satisfied:
\begin{enumerate}
\item D\'evissage condition: the process $(x_t)_{t \geq 0}$ is itself a diffusion process with values in $\mathcal N$. Its own invariant sigma field $\textrm{Inv}((x_t)_{t \geq 0})$ is either trivial or generated by a random variable $\ell_{\infty}$ with values in a separable measure space $(S,\mathcal G, \lambda)$ and the law of $\ell_{\infty}$ is absolutely continuous with respect to $\lambda$.
\item Convergence condition: the second projection $(g_t)_{t \geq 0}$ converges almost surely to a random element $g_{\infty}$ of $G$. 
\item Equivariance condition: the infinitesimal generator $\mathcal L$ of the diffusion $(x_t,g_t)_{t \geq 0}$ is equivariant under the action of $G$ on $C^{\infty}(\mathcal N \times G, \mathbb R)$, i.e. $\forall f \in C^{\infty}(\mathcal N \times G, \mathbb R)$, we have for $(x,g, h) \in \mathcal N \times G \times G$:
\[ 
\mathcal L( f(\cdot,g \cdot ))(x,h) = (\mathcal L f)(x,gh).
\]
\item Regularity condition: all bounded $\mathcal L-$harmonic functions are continuous on the state space $\mathcal N \times G$.
\end{enumerate}
Then, 
$\textrm{Inv}((x_t, g_t)_{t \geq 0})$ and $\textrm{Inv}((x_t)_{t \geq 0}) \vee \sigma(g_{\infty})$ 
coincide up to $\mathbb P_{(x,g)}-$negligeable sets. 
\end{theo}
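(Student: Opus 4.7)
The easy inclusion $\textrm{Inv}((x_t)_{t\geq 0})\vee\sigma(g_\infty)\subseteq\textrm{Inv}((x_t,g_t)_{t\geq 0})$ is essentially formal: any event in $\textrm{Inv}((x_t))$ is pulled back from the first marginal and thus invariant under shifts of the joint process; and since $g_t\to g_\infty$ almost surely by the convergence hypothesis, $g_\infty$ is a tail variable of $(g_t)$, hence any function of $g_\infty$ is shift-invariant for $(x_t,g_t)$.

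For the non-trivial inclusion I will exploit the classical bijection between the invariant sigma-field of a Markov process and its bounded harmonic functions. It is therefore enough to prove that every bounded $\mathcal{L}$-harmonic function $H:\mathcal N\times G\to\RR$ admits a representation of the form
$$
H(x,g)\;=\;\EE_{(x,g)}\bigl[\Phi(\ell_\infty,g_\infty)\bigr]
$$
for some bounded measurable $\Phi:S\times G\to\RR$. Given such an $H$, the process $M_t:=H(x_t,g_t)$ is a bounded $\PP_{(x,g)}$-martingale, so it converges almost surely and in $L^1$ to some limit $M_\infty$, and the identity $H(x,g)=\EE_{(x,g)}[M_\infty]$ reduces the entire problem to identifying $M_\infty$, up to negligible sets, with a $\sigma(\ell_\infty,g_\infty)$-measurable random variable.

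The identification of $M_\infty$ will proceed in three steps. First, the equivariance condition implies that the law of $(x_t,g_t)$ under $\PP_{(x,g)}$ is the image of its law under $\PP_{(x,e)}$ by the left translation $(y,h)\mapsto(y,gh)$; we may therefore work under $\PP_{(x,e)}$ and recover the general case by transport. Second, since $g_t\to g_\infty$ almost surely and $H$ is jointly continuous by the regularity condition, the plan is to prove
$$
M_\infty\;=\;\lim_{t\to\infty}H(x_t,g_\infty)\qquad \PP_{(x,e)}\text{-a.s.,}
$$
i.e.\ that replacing $g_t$ by its limit in the tail of the martingale produces no error. Third, for each frozen $g^*\in G$, the bounded function $\tilde H_{g^*}(x):=H(x,g^*)$ has an almost sure asymptotic value $\lim_t\tilde H_{g^*}(x_t)$ that belongs to $\textrm{Inv}((x_t))$; invoking the dévissage hypothesis, this value is of the form $\Phi(\ell_\infty,g^*)$ for some measurable $\Phi(\cdot,g^*):S\to\RR$. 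Combining the three steps, together with a disintegration argument on the Polish space $(S,\mathcal G,\lambda)$ to secure joint measurability of $(\ell,g^*)\mapsto\Phi(\ell,g^*)$ --- where the absolute continuity of the law of $\ell_\infty$ with respect to $\lambda$ plays a decisive role --- delivers the desired representation $M_\infty=\Phi(\ell_\infty,g_\infty)$.

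The main obstacle is clearly the second, tail-decoupling step: justifying the replacement of $g_t$ by $g_\infty$ inside $H(x_t,\cdot)$ even though $x_t$ need not stay in a compact subset of $\mathcal N$, so that joint continuity of $H$ is not by itself enough. The remedy will combine three ingredients: the equivariance, which lets us localize the question to a neighborhood of the identity in $G$; a local equicontinuity in $g^*$ of the family $\{H(\cdot,g^*)\}_{g^*\in G}$, extracted from the regularity condition through the $G$-action; and the almost sure speed at which $g_t$ approaches $g_\infty$. Verifying joint measurability of $\Phi$ is a more benign but still non-trivial second technical point, to be handled via the disintegration mentioned above.
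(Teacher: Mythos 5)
You should first be aware that the paper does not prove this statement at all: Theorem \ref{theo.devA} is quoted verbatim from \cite{Devissage} (it is Theorem A there), and is only \emph{recalled} here to keep the paper self-contained. So there is no in-paper proof to compare your argument with, and your proposal has to stand on its own. As written, it does not: it is a programme whose two central steps are announced but not carried out, and the ingredients you propose for them are not available from the hypotheses.

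Concretely, your third step asserts that for each frozen $g^*\in G$ the limit $\lim_{t\to\infty}H(x_t,g^*)$ exists almost surely and defines an element of $\textrm{Inv}((x_t)_{t\geq 0})$. But $H(\cdot,g^*)$ is in general \emph{not} harmonic for the generator of the subdiffusion $(x_t)_{t\geq 0}$ --- the full generator $\mathcal L$ couples the two components (in the application in this paper, through the coefficients $\alpha(r,\dot r)$ and $\beta(\dot r)$) --- so $H(x_t,g^*)$ is not a bounded martingale and there is no reason for that limit to exist; this is exactly the difficulty the d\'evissage method is designed to circumvent, and it cannot be assumed. Your second step (replacing $g_t$ by $g_\infty$ inside $H(x_t,\cdot)$) is likewise only a statement of intent, and the three remedies you invoke are not at your disposal: the convergence condition gives almost sure convergence of $g_t$ with \emph{no rate}, and the regularity condition gives continuity of each bounded $\mathcal L$-harmonic function but no equicontinuity of the family $\{H(\cdot,g^*)\}_{g^*\in G}$ uniformly in $x$ over the non-compact manifold $\mathcal N$ --- and uniformity in $x$ is precisely what is needed, since in the intended application $x_t$ leaves every compact set ($r_t\to\infty$). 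In \cite{Devissage} the equivariance and regularity hypotheses are exploited through a conditioning/martingale argument rather than through continuity estimates of this kind. The parts of your proposal that do work are the easy inclusion $\textrm{Inv}((x_t)_{t\geq 0})\vee\sigma(g_\infty)\subseteq\textrm{Inv}((x_t,g_t)_{t\geq 0})$, the reduction to a representation of bounded $\mathcal L$-harmonic functions, and the translation-to-the-identity step via equivariance; but with steps two and three unproved, the key representation $M_\infty=\Phi(\ell_\infty,g_\infty)$ is not established, so the proposal has a genuine gap at its core.
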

Note that this result can be generalized to the case where the second projection does not take values in a finite dimensional Lie group $G$ but in a finite dimensional connected co-compact homogeneous space $Y:=G/K$, see Theorem B of \cite{Devissage}.

\if{
\begin{theo}[Theorem  B of \cite{Devissage}]\label{theo.devB}
Let $X$ be a differentiable manifold and $Y$ be a finite dimensional connected co-compact homogeneous space $Y:=G/K$. Let $(x_t,y_t)_{t \geq 0}$ be a diffusion process with values in $X \times Y$ starting from $(x,y) \in X \times Y$ and denote by $\mathbb P_{(x,y)}$ its law. Suppose that there exists a $K$-right equivariant diffusion $(x_t, g_t)_{t \geq 0}$ in $X \times G$ satisfying Hypotheses 1 to 4 of Theorem \ref{theo.devA} above such that under $\mathbb P_{(x,y)}$ the process  $(x_t, y_t)_{t \geq 0}$ has the same law as  $(x_t, \pi(g_t))_{t \geq 0}$ under $\mathbb P_{(x,g)}$ for $g \in \pi^{-1}(\{ y \})$. Then, $\textrm{Inv}((x_t, y_t)_{t \geq 0})$ and $\textrm{Inv}((x_t)_{t \geq 0}) \vee \sigma(y_{\infty})$ 
coincide up to negligeable sets. 
\end{theo}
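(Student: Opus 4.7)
The plan is to deduce Theorem B from Theorem A by viewing $(x_t, y_t)$ as the image of the lifted process $(x_t, g_t)$ under $\mathrm{id}_X \times \pi$, and then using the $K$-right equivariance to descend the conclusion of Theorem A from the $G$-level to the $Y$-level. Concretely, one applies Theorem \ref{theo.devA} to $(x_t,g_t)$ to get $\mathrm{Inv}((x_t, g_t)) = \mathrm{Inv}((x_t)) \vee \sigma(g_\infty)$ up to $\mathbb{P}_{(x,g)}$-negligible sets, with $g_\infty := \lim_{t\to\infty} g_t$. Continuity of $\pi$ yields $y_t = \pi(g_t) \to \pi(g_\infty) =: y_\infty$ a.s., and since $(x_t, y_t) = (\mathrm{id} \times \pi)(x_t, g_t)$ is a coordinate-wise image of the lifted process, $\mathrm{Inv}((x_t, y_t)) \subset \mathrm{Inv}((x_t, g_t))$. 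The easy inclusion $\mathrm{Inv}((x_t)) \vee \sigma(y_\infty) \subset \mathrm{Inv}((x_t, y_t))$ is immediate, so the work is in the reverse inclusion.

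Fix $A \in \mathrm{Inv}((x_t, y_t))$. By what precedes, $A \in \mathrm{Inv}((x_t)) \vee \sigma(g_\infty)$, so there exists a measurable $F$ with $\mathbf{1}_A = F(\ell_\infty, g_\infty)$ almost surely, where $\ell_\infty$ is the generator of $\mathrm{Inv}((x_t))$ supplied by Hypothesis 1 of Theorem \ref{theo.devA} (the trivial case is handled by dropping $\ell_\infty$). Now $\sigma((x_t, y_t)_{t \geq 0})$ pulls back to the sub-$\sigma$-field of $\sigma((x_t, g_t)_{t \geq 0})$ consisting of events invariant under the constant right $K$-translation $(x_\bullet, g_\bullet) \mapsto (x_\bullet, g_\bullet k)$, $k \in K$. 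By $K$-right equivariance of the diffusion this pathwise transformation leaves $\mathbf{1}_A$ invariant, fixes $\ell_\infty$ (a function of $x_\bullet$ alone), and transports $g_\infty$ to $g_\infty k$; applied to the representation $\mathbf{1}_A = F(\ell_\infty, g_\infty)$ it yields, for each fixed $k \in K$,
$$F(\ell_\infty, g_\infty) = F(\ell_\infty, g_\infty\, k) \quad \mathbb{P}_{(x,g)}\text{-almost surely}.$$

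It remains to upgrade this family of $k$-by-$k$ almost-sure equalities to a single $K$-right invariance holding outside a single null set, from which a factorization $F(\ell_\infty, g_\infty) = \tilde F(\ell_\infty, \pi(g_\infty)) = \tilde F(\ell_\infty, y_\infty)$ would follow and place $A$ in $\mathrm{Inv}((x_t)) \vee \sigma(y_\infty)$. To do this I would work on the regular conditional law of $g_\infty$ given $\ell_\infty$, fix a probability measure $\nu$ on $K$ absolutely continuous with respect to a (right) Haar measure and of full support, and form the modification $\hat F(\ell_\infty, g) := \int_K F(\ell_\infty, g k)\,\nu(dk)$. An application of Fubini using the a.s. relation displayed above shows that $\hat F(\ell_\infty, g_\infty) = F(\ell_\infty, g_\infty)$ almost surely, while $\hat F(\ell_\infty, \cdot)$ is genuinely $K$-right invariant on the support of the conditional law; any such measurable $K$-invariant function on $G$ descends to a measurable function on $Y = G/K$, providing the sought factorization.

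The main obstacle is precisely this final averaging step. When $K$ is compact, one simply averages against the normalized Haar measure on $K$ and concludes directly. In the general situation considered by Theorem B only the quotient $Y = G/K$ is assumed co-compact, so $K$ itself may fail to be compact; the averaging must then be carried out against a probability measure on $K$ with suitable support and density properties, and one must verify by a monotone class or approximation argument that the resulting almost-sure $K$-invariance genuinely forces factorization through $\pi$ rather than merely invariance in a weaker integrated sense. The regularity hypothesis 4 of Theorem \ref{theo.devA} and the absolute continuity of the law of $\ell_\infty$ assumed in hypothesis 1 are natural inputs to control the required measurability and null-set issues.
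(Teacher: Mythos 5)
First, a point of comparison: the paper does not actually prove this statement --- it is quoted from \cite{Devissage} (Theorem B there), and the closest in-paper analogue is the hands-on descent carried out in the proof of Theorem \ref{theo.kpoisson}, where the conclusion of Theorem \ref{theo.devA} for the lifted diffusion $(r_t,\dot r_t,b_t,g_t)$ (Proposition \ref{poisson.en.haut}) is pushed down to kinetic Brownian motion. Your overall scheme --- lift, apply Theorem \ref{theo.devA}, then descend using $K$-right equivariance plus an averaging over $K$ --- is exactly that scheme, and in the compact case your remark (average against normalized Haar measure and use a measurable section of $G\to G/K$) is essentially what the paper does. The paper, however, phrases the descent entirely in terms of bounded harmonic functions $h(x,g)=\mathbb{E}_{(x,g)}\big[\mathrm{F}(g_\infty)\big]$ together with an equivariance-in-law lemma (the law of $(g_t)$ under $\mathbb{P}_{(x,gk,k^{-1})}$ is that of $(g_tk)$ under $\mathbb{P}_{(x,g,\mathrm{Id})}$), which is cleaner than your event-level formulation: it gives the $K$-invariance of $h$ in $g$ exactly, for every starting point, with no null-set bookkeeping.

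The genuine gap is the step you yourself flag as the main obstacle, and your proposed fix does not close it. With $\nu$ a probability on $K$ absolutely continuous with respect to Haar measure, the average $\hat F(\ell,g)=\int_K F(\ell,gk)\,\nu(dk)$ satisfies $\hat F(\ell,gk_0)=\int_K F(\ell,gk)\,(k_0\nu)(dk)$, so exact right $K$-invariance of $\hat F$ would force $\nu$ to be translation invariant on $K$ --- impossible for a probability measure when $K$ is not compact; and ``invariance on the support of the conditional law'' is not sufficient to define a measurable function on $G/K$, which is what the factorization through $\pi$ requires. So as written the argument is complete only when $K$ is compact (the situation of the paper's application, $K=SO(d-1)$), while the general co-compact statement is left unproved. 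A secondary issue: to get, for fixed $k$, the identity $F(\ell_\infty,g_\infty k)=F(\ell_\infty,g_\infty)$ $\mathbb{P}_{(x,g)}$-almost surely, you compose with the pathwise map $g_\bullet\mapsto g_\bullet k$, which by equivariance changes the law from $\mathbb{P}_{(x,g)}$ to $\mathbb{P}_{(x,gk)}$; this step therefore uses that Theorem \ref{theo.devA} furnishes the representation with a single $F$ valid simultaneously for all starting points (equivalently, one should argue at the level of the harmonic function $h(x,g)=\mathbb{E}_{(x,g)}[F(\ell_\infty,g_\infty)]$, as the paper does), a point your pathwise phrasing glosses over.
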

}\fi

\subsection{Kinetic Brownian motion in polar coodinates}\label{sec.reduction}
The remainder of this article is devoted to proving (finer versions of) Theorem \ref{theo.radial}, Proposition \ref{prop.angular} and Theorem \ref{theo.kpoisson}. We shall exclusively be working, from now on, on a rotationnaly invariant manifold and use polar coordinates. In order to make explicit the long time behaviour of kinetic Brownian motion, we first write down the system of stochastic differential equations satisfied by the polar coordinates of kinetic Brownian motion on a rotationally invariant manifold. We exhibit in particular a lower dimensional subdiffusion that will facilitate its study.
Let us denote by $(r, \theta, \dot{r}, \dot{\theta})$ the polar coordinates in $T^1 \mathcal{M}$. Then, system \eqref{eqn.system} giving the dynamics of kinetic Brownian motion in any local chart reads here
\begin{equation}\label{eqn.system.rot}
\left \lbrace \begin{array}{ll}
dr_t &= \ds{\dot{r}_t dt}, \\
\\
d\theta_t &= \ds{\dot{\theta}_t dt}, \\
\\
d \dot{r}_t &= \ds{\sigma d M^{\dot{r}}_t -\frac{\sigma^2}{2}(d-1) \dot{r}_t dt + \frac{f'}{f}(r_t) \left (1-\dot{r}_t^2 \right ) dt}, \\
\\
d \dot{\theta}^i_t &=\ds{ \sigma d M^{ \dot{\theta}^i}_t -  \frac{\sigma^2}{2}(d-1) \dot{\theta}^i_t dt -2 \frac{f'}{f}(r_t) \dot{r}_t \dot{\theta}^i_{t} dt - \theta^i_t \left(\frac{1-\dot{r}_t^2}{f^2(r_t)}\right)dt}, \quad i=1, \dots, d,
\end{array}\right.
\end{equation}
where $M^{\dot{r}}$ and $M_t^{\dot{\theta}^i}$ are local martingales whose covariance matrix is given by
\begin{equation}
\left \lbrace \begin{array}{ll}
d \ds{\langle M^{\dot{r}}, M^{\dot{r}} \rangle_t }& = \ds{(1-\dot{r}_t^2 )dt}, \\
\\
d \ds{\langle M^{r}, M^{\dot{\theta}^j}  \rangle_t} &= - \ds{\dot{r}_t \dot{\theta}^j_t  dt}, \quad   j=1, \dots, d,\\
\\
d \ds{\langle M^{\dot{\theta}^i}, M^{\dot{\theta}^j}  \rangle_t} &=\ds{\left (\frac{\delta_{i j}-\theta^i_t \theta^j_t}{f^2(r_t)} - \dot{\theta}^i_t \dot{\theta}^j_t \right)dt}, \quad  i,j=1, \dots, d.
\end{array} \right.
\end{equation}
Since the sample paths are parametrized by the arc-length, we have also the following metric relation, relating the radial and angular components:
\begin{equation}\label{eqn.metric}
\dot{r}_t^2 + f(r_t)^2 \big\vert \dot{\theta_t} \big\vert^2 =1.
\end{equation}

\if{
\begin{remark}
The martingales above can be represented by a $d-$dimensional standard Brownian $W=(W^1, \ldots, W^d)$ in the following way: 
\begin{equation}
\left \lbrace \begin{array}{ll}
d M^{\dot{r}}_t & =\ds{ f(r_t) \sum_{i=1}^d \dot{\theta}^i_t d W^i_t},\\
\\
d M^{\dot{\theta}^i}_t&=\ds{\frac{1}{f(r_t)} d W^i_t - \frac{f(r_t)}{1-\dot{r}_t} \dot{\theta}^i_t \dot{\theta}^j_t d W^j_t}, \quad  i=1, \dots, d.\\
\end{array} \right.
\end{equation}
\end{remark}
}\fi

Due to isotropy, kinetic Brownian motion on a rotationally invariant manifold fortunately admits a radial subdiffusion. Contrary to the case of the classical Brownian motion where the radial subdiffusion is one dimensional, the radial subdiffusion of kinetic Brownian motion is $2-$dimensional. 
\begin{prop}
The process $(r_t, \dot{r}_t)_{t \geq 0}$ with values in $(0,+\infty) \times [-1,1]$ is a subdiffusion of the kinetic Brownian motion $(r_t, \dot{r}_t, \theta_t, \dot{\theta}_t)_{t \geq 0} $ in $T^1\mathcal M$.
\end{prop}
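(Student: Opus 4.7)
The plan is to read off from the system \eqref{eqn.system.rot} that the equations for $(r_t,\dot r_t)$ are autonomous, in the sense that both the drift coefficients and the quadratic variation of the driving martingale depend only on $(r_t,\dot r_t)$ themselves, and no other component of the process is involved.

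First I would isolate the two equations
\begin{equation*}
dr_t = \dot r_t\,dt, \qquad d\dot r_t = \sigma\,dM^{\dot r}_t - \frac{\sigma^2}{2}(d-1)\dot r_t\,dt + \frac{f'}{f}(r_t)\bigl(1-\dot r_t^2\bigr)\,dt,
\end{equation*}
and observe that the drift of $r_t$ depends only on $\dot r_t$, while the drift of $\dot r_t$ is a smooth function of $(r_t,\dot r_t)$ only. Then I would note that, according to the bracket relations in \eqref{eqn.system.rot}, one has $d\langle M^{\dot r}\rangle_t = (1-\dot r_t^2)\,dt$, which also involves only $\dot r_t$.

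Next, appealing to the standard representation of a continuous real-valued local martingale with absolutely continuous bracket (possibly on an enlarged probability space), I would write $M^{\dot r}_t = \int_0^t \sqrt{1-\dot r_s^2}\,d\beta_s$ for some standard real Brownian motion $\beta$. This yields the closed stochastic differential system
\begin{equation*}
\left\{\begin{array}{l}
dr_t = \dot r_t\,dt, \\[0.2cm]
d\dot r_t = \sigma\sqrt{1-\dot r_t^2}\,d\beta_t + \left[\dfrac{f'}{f}(r_t)\bigl(1-\dot r_t^2\bigr) - \dfrac{\sigma^2}{2}(d-1)\,\dot r_t\right]dt,
\end{array}\right.
\end{equation*}
whose coefficients are locally Lipschitz on $(0,+\infty)\times[-1,1]$ (and in fact the diffusion coefficient vanishes at the boundary $\dot r = \pm 1$, so the domain is preserved, consistently with the metric constraint \eqref{eqn.metric}). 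This identifies $(r_t,\dot r_t)$ as the unique strong Markov solution to the above autonomous two-dimensional SDE, hence as a genuine diffusion on $(0,+\infty)\times[-1,1]$, whose generator is
\begin{equation*}
\frac{\sigma^2}{2}(1-\dot r^2)\,\partial^2_{\dot r} + \left[\frac{f'}{f}(r)(1-\dot r^2)-\frac{\sigma^2}{2}(d-1)\dot r\right]\partial_{\dot r} + \dot r\,\partial_r.
\end{equation*}

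There is essentially no obstacle here; the statement is really a verification that the angular variables $(\theta_t,\dot\theta_t)$ decouple from the radial pair, and the only mildly delicate point is the use of the martingale representation, which is standard.
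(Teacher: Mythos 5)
Your proof is correct and follows essentially the same route as the paper's: one reads off from the polar-coordinate system \eqref{eqn.system.rot} that the $(r_t,\dot r_t)$ equations are autonomous, and uses the bracket $d\langle M^{\dot r}\rangle_t=(1-\dot r_t^2)\,dt$ together with the martingale representation theorem to close the system as the SDE \eqref{eqn.radial} driven by a single real Brownian motion. Your version merely spells out the representation step and the generator more explicitly than the paper does.
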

\begin{proof}
From Equation (\ref{eqn.system.rot}), the $2-$dimensional radial process is clearly a subdiffusion of the whole process. 
Namely, there exists a real standard Brownian motion $B$ such that $(r_t, \dot{r}_t)$  is solution to the following stochastic differential equations system:
\begin{equation}\label{eqn.radial}\left \lbrace
\begin{array}{ll}
d r_t & = \dot{r}_t dt,\\
\\
d \dot{r}_t &  = \ds{\sigma \sqrt{1-\dot{r}_t^2} dB_t -\frac{\sigma^2}{2}(d-1) \dot{r}_t dt + \frac{f'}{f}(r_t) \left (1-\dot{r}_t^2 \right ) dt}.
\end{array}\right.
\end{equation}
\end{proof}

\if{
\begin{remark}\label{rem.spherique}
Note that, from Equations (\ref{eqn.angular}) and (\ref{eqn.angularcov}), the normalized angular derivative $\dot{\theta}_t/|\dot{\theta}_t|$ is a time changed spherical Brownian motion on $\mathbb S^{d-1}$ parametrized by the clock $\sigma^2 C_t$ where
\begin{equation}
C_t:=  \int_0^t \frac{ds}{1-\dot{r}_s^2}.
\end{equation}
Moreover, this spherical Brownian motion is independant of the radial diffusion as shown at the end of Appendix B.
\end{remark}
}\fi

Let us describe the behaviour of this radial process $(r_t,\dot{r}_t)$ at the boundary points of its state space $(0, +\infty) \times [-1,1]$. 

\begin{prop}\label{pro.lifetime.radial}
If $d \geq 3$, for all starting point $(r_0,\dot{r}_0) \in (0,+\infty) \times [-1,1]$, the stochastic differential equation system (\ref{eqn.radial}) admits a unique strong solution, which is well defined for all $t>0$, and such that $r_t>0$ and $-1<\dot{r}_t<1$ for all $t>0$.
\end{prop}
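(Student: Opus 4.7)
My plan is to split the proof into (i) local strong existence and pathwise uniqueness of system (\ref{eqn.radial}) on the interior, and (ii) the construction of a Lyapunov function that rules out the three ``bad'' boundary behaviours $r_t\to 0$, $\dot r_t\to \pm 1$ in finite time. For (i), the coefficients are smooth on $(0,\infty)\times(-1,1)$, while at the boundary $\dot r=\pm 1$ the diffusion coefficient $\sqrt{1-\dot r^2}$ is $\tfrac{1}{2}$-H\"older in $\dot r$, so Yamada--Watanabe's criterion gives pathwise uniqueness; a strong solution then exists up to the first hitting time of the boundary of $(0,\infty)\times[-1,1]$. The a priori bound $|\dot r|\le 1$ is preserved because the diffusion vanishes and the drift points inward at $\dot r=\pm 1$, and $r_t\le r_0+t$ since $|\dot r|\le 1$; it remains only to show that $r_t>0$ and $|\dot r_t|<1$ for every $t>0$.

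For (ii), I introduce the angle $\varphi_t\in(0,\pi)$ defined by $\dot r_t=\cos\varphi_t$. A direct application of It\^o's formula (matching the diffusion and drift terms against those of \eqref{eqn.radial}) gives the Bessel-type equation
\begin{equation*}
d\varphi_t \;=\; \sigma\,d\beta_t \;+\; \Big[\tfrac{\sigma^2(d-2)}{2}\,\cot\varphi_t \;-\; \tfrac{f'}{f}(r_t)\,\sin\varphi_t\Big]\,dt,
\end{equation*}
for some scalar Brownian motion $\beta$, with $dr_t=\cos\varphi_t\,dt$. I then propose the Lyapunov function
\begin{equation*}
\Psi(r,\varphi) \;:=\; -\log r \;-\; \log\sin\varphi,
\end{equation*}
which diverges at each of the bad boundaries $r\to 0^+$, $\varphi\to 0^+$, $\varphi\to\pi^-$. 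A short calculation of the generator $L$ of $(r_t,\varphi_t)$ yields
\begin{equation*}
L\Psi(r,\varphi) \;=\; \cos\varphi\,\Big(\tfrac{f'}{f}(r)-\tfrac{1}{r}\Big) \;+\; \tfrac{\sigma^2}{2} \;+\; \tfrac{\sigma^2(3-d)}{2}\,\cot^2\varphi.
\end{equation*}
Two cancellations are crucial: the potentially singular pieces $-\cos\varphi/r$ and $(f'/f)(r)\cos\varphi$ combine into a \emph{bounded} remainder because the polar-regularity expansion $f(r)=r+O(r^3)$ forces $f'/f(r)-1/r$ to be continuous and bounded on $(0,R]$ for any $R<\infty$; and the hypothesis $d\ge 3$ makes $(3-d)\cot^2\varphi\le 0$, neutralizing the singularity at $\varphi\in\{0,\pi\}$. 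Hence for every $T>0$ there exists $c_T<\infty$ with $L\Psi\le c_T$ on $(0,r_0+T]\times(0,\pi)$.

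To conclude, set $\tau_n:=\inf\{t:r_t\le 1/n\text{ or }\sin\varphi_t\le 1/n\}$; It\^o's formula applied to $\Psi(r_{t\wedge\tau_n},\varphi_{t\wedge\tau_n})$ followed by taking expectations (the martingale part being well controlled by the stopping) yields
\begin{equation*}
\mathbb{E}\bigl[\Psi(r_{t\wedge\tau_n},\varphi_{t\wedge\tau_n})\bigr] \;\le\; \Psi(r_0,\varphi_0) \;+\; c_T\,t \qquad \text{for all } t\le T.
\end{equation*}
Since $\Psi\ge \log n$ on $\{\tau_n\le t\}$, Markov's inequality forces $\mathbb{P}(\tau_n\le t)\to 0$ and so $\tau_n\uparrow\infty$ almost surely, proving $r_t>0$ and $|\dot r_t|<1$ for every $t>0$. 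The case of a boundary initial velocity $\dot r_0=\pm 1$ is handled through the $\varphi$-equation: for $d-2\ge 1$ the Bessel-type drift $\tfrac{\sigma^2(d-2)}{2}\cot\varphi$ is strong enough that $\varphi_t$ enters $(0,\pi)$ instantly, exactly as a Bessel process of dimension $d-1\ge 2$ leaves the origin instantly. The main obstacle is the boundary $r\to 0$: because $r$ alone has no noise, naive one-dimensional Feller tests do not apply, and the non-attainability of the pole is visible only through the coupling with $\dot r$ and the geometric cancellation above, which itself reflects the smooth extension of $g$ at the pole encoded in $f(0)=0,\ f'(0)=1$.
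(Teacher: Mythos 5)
Your argument is correct, and it reaches the conclusion by a route that differs in execution from the paper's, though both rest on the same underlying mechanism: applying It\^o's formula to a function that blows up on the bad boundary $\{r=0\}\cup\{\dot r^2=1\}$ and showing it cannot blow up in finite time. The paper works directly with $f^2(r_t)(1-\dot r_t^2)$ and derives the \emph{exact} exponential identity
$f^2(r_t)(1-\dot{r}_t^2) =f^2(r_0)(1-\dot{r}_0^2) \exp ( -\sigma^2 t  + (d-3)\sigma^2 \int_0^t \tfrac{\dot{r}_s^2}{1-\dot{r}_s^2} ds +  2 \sigma \int_0^t \tfrac{\dot{r}_s}{\sqrt{1-\dot{r}_s^2}}dB_s )$,
from which positivity is read off; note that up to bounded corrections your $\Psi=-\log r-\tfrac12\log(1-\dot r^2)$ is half of their $-\log\big(f^2(r)(1-\dot r^2)\big)$, so the two Lyapunov functions are essentially the same object. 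What your version buys is transparency and rigour at the closing step: the change of variables $\dot r=\cos\varphi$ turns $\dot r$ into a Bessel-like angle, the inequality $L\Psi\le c_T$ makes explicit both where $d\ge 3$ enters (the sign of $(3-d)\cot^2\varphi$) and where the pole conditions $f(0)=0$, $f'(0)=1$ enter (boundedness of $f'/f-1/r$), and the stopped-martingale/Markov argument supplies the localization that the paper's formula implicitly requires (a priori the stochastic integral in their exponent is only defined up to the hitting time one is trying to exclude). What the paper's version buys is brevity and an exact formula. Two minor points: your appeal to $f(r)=r+O(r^3)$ is stronger than what the paper assumes ($f$ of class $C^2$ with $f(0)=0$, $f'(0)=1$ already gives $rf'(r)-f(r)=O(r^2)$, hence boundedness of $f'/f-1/r$, which is all you use); and your Yamada--Watanabe step is for a two-dimensional triangular system rather than a scalar SDE, so it deserves a word of justification --- though the paper is exactly as terse on this point, as it is on the instantaneous exit from $\dot r_0=\pm1$, which you treat by the same Bessel heuristic.
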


\begin{proof}
First remark that if $\dot{r}_0^2 < 1$ and $r_0>0$, all the coefficients in (\ref{eqn.radial}) being smooth for $(r,\dot{r}) \in (0,+\infty) \times (-1,1)$, it admits a unique strong solution up to explosion. Moreover, if $\dot{r}_0^2=1$, since $r \mapsto \sqrt{1-\dot{r}^2}$ is $1/2-$H\"older, Equation (\ref{eqn.radial}) also admits a unique strong solution up to explosion and $\dot{r}_t^2<1$ for arbitrary small $t>0$ almost surely. From the metric relation (\ref{eqn.metric}), we have $\dot{r}_t^2 \leq 1$ so that $r_t$ can not go to infinity in finite time almost surely. Otherwise, if $\dot{r}_0^2 < 1$ and $r_0>0$, straightforward It\^o calculus shows that 
\[
f^2(r_t)(1-\dot{r}_t^2) =f^2(r_0)(1-\dot{r}_0^2) \exp \left( -\sigma^2 t  + (d-3)\sigma^2 \int_0^t \frac{\dot{r}_s^2}{1-\dot{r}_s^2} ds +  2 \sigma \int_0^t \frac{\dot{r}_s}{\sqrt{1-\dot{r}_s^2}}dB_s  \right).  
\]
From this, we deduce that, almost surely, the left hand side can not vanish in finite time, in particular $r_t>0$ and $-1<\dot{r}_t<1$ for all $t>0$.
\end{proof}

\begin{remark}\label{rem.d2}
In the case $d=2$, standard comparison arguments show that the endpoints $-1$ and $+1$ can be reached by $r_t$ in finite time almost surely, and that these points are instantaneously reflecting. From relation \eqref{eqn.metric}, the norm $|\dot{\theta }|$ vanishes at such hitting times, so the renormalized process $\dot{\theta}/|\dot{\theta }|$ can not be considered. This is the only reason why the case $d=2$ requires a separate treatment. Nevertheless, the study can be done in the same line as in the case $d \geq 3$, see e.g. Appendix A for a complete treatment of the case where the base manifold is the hyperbolic plane.
\end{remark}

From Equation \eqref{eqn.metric}, Proposition \ref{pro.lifetime.radial} ensures that $|\dot{\theta}_t|>0$ for all $t >0$ almost surely. It will be usefull in the sequel to consider the normalized angular process $(\theta_t, \dot{\theta}_t/|\dot{\theta}_t|)_{t \geq 0}$ with values in $T^1 \mathbb S^{d-1}$. 
Starting from Equation \eqref{eqn.system.rot}, a direct calculation shows that this process satisfies the following system of stochastic differential equations, for $1\leq i \leq d$:

\begin{equation}\label{eqn.angular}
\left \lbrace \begin{array}{ll}
\ds{d \theta_t^i} & = \ds{\frac{\dot{\theta}_t^i}{|\dot{\theta}_t|}  \left(\frac{\sqrt{1-\dot{r}_t^2}}{f(r_t)} \right) dt }, \\
\\
\ds{d \frac{\dot{\theta}_t^i}{|\dot{\theta}_t|} }& = \ds{-\theta^i_t \left(\frac{\sqrt{1-\dot{r}_t^2}}{f(r_t)}\right) dt -  \frac{\sigma^2}{2} (d-2)\frac{\dot{\theta}_t^i}{|\dot{\theta}_t|} \frac{dt}{1-\dot{r}_t^2} + \sigma d N^i_t},
\end{array}
\right.
\end{equation}
where the local martingales are given by 
\[
d N^i_t := \frac{1}{|\dot{\theta}_t|} \left(  d M^{\dot{\theta}^i}_t  - \frac{\dot{\theta}_t^i}{|\dot{\theta}_t|}  \sum_{j=1}^d \frac{\dot{\theta}_t^j}{|\dot{\theta}_t|} d M^{\dot{\theta}^j}_t\right),
\]
so that their covariance matrix reads 
\begin{equation}\label{eqn.angularcov}
d  \left \langle N^i_t, N^j_t \right \rangle= \left( \delta^{ij} - \theta^i_t \theta^j_t -\frac{\dot{\theta}_t^i}{|\dot{\theta}_t|}  \frac{\dot{\theta}_t^j}{|\dot{\theta}_t|} \right) \frac{dt}{1-\dot{r}_t^2}.
\end{equation}
Moreover, for $1 \leq i \leq d$, we have 
\begin{equation}\label{eq.crochet}
d  \left \langle M^{\dot{r}}_t, N^i_t \right \rangle= \frac{1}{|\dot{\theta}_t|} \left(  \left \langle d M^{\dot{r}}, d M^{\dot{\theta}^i}_t \right \rangle  - \frac{\dot{\theta}_t^i}{|\dot{\theta}_t|}  \sum_{j=1}^d \frac{\dot{\theta}_t^j}{|\dot{\theta}_t|} \left \langle d M^{\dot{r}}, d M^{\dot{\theta}^j}_t\right \rangle \right) =0.
\end{equation}
Equivalently, the infinitesimal generator of the full process $(r_t, \dot{r}_t, \theta_t, \dot{\theta}_t/|\dot{\theta}_t|)_{t \geq 0}$ is given by
\begin{equation}
\begin{array}{ll}
\mathcal L & = \ds{\mathcal L_{(r,\dot{r})} + \left( \frac{\dot{\theta}^i}{|\dot{\theta}|} \partial_{\theta^i} - \theta^i \partial_{\dot{\theta}^i /|\dot{\theta}|}\right) \left(\frac{\sqrt{1-\dot{r}^2}}{f(r)}\right) }\\
\\ & +  \ds{\frac{\sigma^2}{2} \left( -(d-2)\frac{\dot{\theta}_t^i}{|\dot{\theta}_t|}  \partial_{\dot{\theta}^i /|\dot{\theta}|} +
 \left( \delta^{ij} - \theta^i \theta^j -\frac{\dot{\theta}^i}{|\dot{\theta}|}  \frac{\dot{\theta}^j}{|\dot{\theta}|} \right)\partial_{\dot{\theta}^i /|\dot{\theta}|} \partial_{\dot{\theta}^j /|\dot{\theta}|} 
\right)   \left(\frac{1}{1-\dot{r}^2}\right)},
\end{array}
\end{equation} 
where $\mathcal L_{(r,\dot{r})}$ is the infinitesimal generator of the subdiffusion $(r_t, \dot{r}_t)_{t \geq 0}$, namely
\[
\mathcal L_{(r,\dot{r})} = \dot{r} \partial_r + \frac{f'}{f}(r) \left (1-\dot{r}^2\right)\partial_{\dot{r}}  +\frac{\sigma^2}{2} \left(  -(d-1) \dot{r} \partial_{\dot{r}} +  \left(1-\dot{r}^2\right)\partial^2_{\dot{r}} \right).
\]
Note that if $\sigma=0$, the above infinitesimal generator is nothing but the generator of the geodesic flow on $T^1 \mathcal M$.

\begin{remark}\label{rem.repmart}
Note that local martingale $N_t=(N^1_t, \ldots, N^d_t)$ can be represented by a standard Euclidean Brownian motion $W_t=(W^1_t, \ldots, W^d_t)$ in the following way:
\[
 d N_t = d W_t - \theta_t \langle \theta_t, d W_t \rangle - \frac{\dot{\theta}_t}{|\dot{\theta}_t|} \left \langle \frac{\dot{\theta}_t}{|\dot{\theta}_t|}, d W_t \right \rangle.
\]
In particular, $d N_t$ is orthogonal to both $\theta_t $ and $\frac{\dot{\theta}_t}{|\dot{\theta}_t|}$.
\end{remark}

\subsection{Asymptotics of the radial components} \label{sec.radial}

We study in this section the recurrence and transience properties of the radial subdiffusion. We give in particular the proof of Theorem \ref{theo.radial} stated in Section \ref{sec.asymp}. 

\subsubsection{Transience/recurrence of the radial process $r_t$}

We first show that the under the single hypothesis that the radial curvature $K(r)$ is globally non-positive, the first projection $r_t$ is almost surely transient, which is the first statement of Theorem \ref{theo.radial}.

\begin{prop}\label{pro.transneg}
Let $(r_t, \dot{r}_t)_{t \geq0}$ be the unique strong solution of Equation (\ref{eqn.radial}) starting from $(r_0, \dot{r}_0)  \in (0,+\infty) \times [-1,1]$. If the warping function $f$ is convex on $(0,+\infty)$ i.e. if the radial curvature $K(r)$ is globally non positive, then $r_t$ goes almost surely to infinity with $t$.
\end{prop}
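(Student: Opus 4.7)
The natural Lyapunov function exploits the interplay between the drift $(f'/f)(1-\dot r^2)$, nonnegative whenever $f$ is convex, and the damping $-c\dot r$ in the $\dot r$-equation, where $c := \sigma^2(d-1)/2$. Setting $F_1(r) := \int_0^r f(u)\,du$, I consider
\[
W(r,\dot r) := F_1(r) + \frac{1}{c}\,f(r)\,\dot r.
\]
A direct computation with the radial generator $\mathcal L_{(r,\dot r)}$ reveals that the term $f(r)\dot r$ produced by $\dot r\,\partial_r$ is exactly cancelled by the one produced by $-c\dot r\cdot\partial_{\dot r}$, while the $\partial^2_{\dot r}$-term vanishes since $W$ is affine in $\dot r$. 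One obtains
\[
\mathcal L_{(r,\dot r)} W(r,\dot r) \;=\; \frac{f'(r)}{c}.
\]
Convexity of $f$ together with $f'(0)=1$ forces $f'\ge 1$ everywhere, so $\mathcal L W \ge 1/c > 0$.

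\textbf{Main estimate.} It\^o's formula gives
\[
W(r_t,\dot r_t) \;=\; W(r_0,\dot r_0) + \frac{1}{c}\int_0^t f'(r_s)\,ds + N_t,
\]
where $N_t$ is a continuous local martingale of bracket $\langle N\rangle_t = (\sigma^2/c^2)\int_0^t f(r_s)^2(1-\dot r_s^2)\,ds$. Using $|\dot r|\le 1$ to upper bound $W(r_t,\dot r_t)$ by the continuous, strictly increasing, coercive function $H(r) := F_1(r) + f(r)/c$, and using $\int_0^t f'(r_s)\,ds\ge t$, one deduces the key almost-sure inequality
\[
H(r_t) \;\ge\; W(r_0,\dot r_0) + \frac{t}{c} + N_t, \qquad \text{for all } t\ge 0. \tag{$\star$}
\]

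\textbf{Conclusion, and the main obstacle.} Fix $R > r_0$ and let $\tau_R := \inf\{t\ge 0: r_t = R\}$. On the event $\{\tau_R = +\infty\}$, $r_s\in[0,R]$ for all $s$, so $f(r_s)\le f(R)$ and $\langle N\rangle_t\le \sigma^2 f(R)^2 t/c^2$. The strong law of large numbers for continuous martingales with linearly growing bracket then yields $N_t/t \to 0$ almost surely on this event, and $(\star)$ forces $H(R)\ge H(r_t) \ge W(r_0,\dot r_0)+t/c+N_t\to+\infty$, a contradiction. Hence $\mathbb P(\tau_R<\infty)=1$ for every $R > r_0$, which already gives $\limsup_t r_t = +\infty$ almost surely. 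The main obstacle is upgrading this to the genuine convergence $r_t\to\infty$, i.e.\ to the $\liminf$ statement. To that end I would iterate the previous argument through the strong Markov property at the successive hitting times of a diverging sequence of levels $R_n\uparrow\infty$, together with a quantitative Borel--Cantelli estimate for the probability that the process returns below a fixed level starting from large $r$. Such a return estimate can be obtained either by optional stopping applied to a bounded superharmonic function of the form $\psi(W)$ with $\psi$ concave, decreasing, and vanishing at infinity, or by a refined pathwise control of the martingale $N_t$ exploiting the explicit formula for $f(r_t)^2(1-\dot r_t^2)$ derived in the proof of Proposition~\ref{pro.lifetime.radial}.
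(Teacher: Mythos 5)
Your Lyapunov computation is correct and elegant: with $c=\sigma^2(d-1)/2$ and $W(r,\dot r)=F_1(r)+\tfrac{1}{c}f(r)\dot r$ one indeed finds $\mathcal L_{(r,\dot r)}W=f'(r)/c\geq 1/c$, the bound $W\leq H(r)$ holds since $|\dot r|\leq 1$, and the stopping/SLLN argument on $\{\tau_R=\infty\}$ is sound. This cleanly yields $\PP(\tau_R<\infty)=1$ for every $R$, hence $\limsup_{t\to\infty}r_t=+\infty$ almost surely. This part is a genuinely different, and arguably more economical, route than the paper's.

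However, as you acknowledge yourself, the argument stops short of the statement to be proved: $\limsup r_t=\infty$ is compatible with recurrence (a one-dimensional Brownian motion already satisfies it), and the upgrade to $r_t\to\infty$ is precisely the hard part. The tools you propose for closing the gap are not ones you have exhibited. A subharmonic-type function with $\mathcal L W>0$, which is what you constructed, certifies unboundedness only; transience requires the opposite object, a bounded function $\phi$ with $\mathcal L\phi\leq 0$ that tends to $0$ at infinity and is bounded away from $0$ on compacts, and for a degenerate two-dimensional diffusion such a $\phi$ does not come from composing $W$ with a concave decreasing $\psi$ without further work (the It\^o correction involves $\psi''\langle N\rangle$, and controlling its sign against $\psi'\mathcal L W$ requires a quantitative lower bound on $f(r)^2(1-\dot r^2)$ that you do not have). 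The existence of such a return-probability estimate is essentially equivalent to the proposition itself, so the sketch is circular as it stands. The paper resolves exactly this point by a different device: after the time change $D_t=\sigma^2\int_0^t(1-\dot r_s^2)\,ds$ it forms $u_t=\widetilde\sigma\rho_t+\dot\rho_t$, uses the convexity consequence $f'/f\geq 1/r$ to dominate $u_t$ from below by a Bessel process of dimension $d\geq 3$, and imports the transience of the latter; it also needs a separate argument (the non-simultaneous convergence of $M_t$ and $J_t$) to handle the event where the time change does not diverge. Some ingredient of this kind, giving genuine escape rather than mere unboundedness, is missing from your proposal.
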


\begin{remark}\label{rem.curv2}
Let us stress that we are dealing here with \emph{global} convexity and not convexity outside a compact set. Even in the case of the standard Brownian in a rotationally invariant manifold, negative curvature outside a compact set is not sufficient to ensure transience of process, see Remark 2.4 of \cite{marcanton2}.\end{remark}

\begin{proof}
Let $(r_t, \dot{r}_t)$ be the solution of Equation (\ref{eqn.radial}) starting from $(r_0, \dot{r}_0)$.
We have then
\begin{equation}\label{eqn.radial2}
r_t-r_0   =\underbrace{\frac{2 (\dot{r}_0- \dot{r}_t)}{(d-1)\sigma^2} }_{:=I_t}  + \frac{2 }{(d-1)\sigma^2} \underbrace{\int_0^t \frac{f'}{f}(r_s) \left (1-\dot{r}_s^2 \right ) ds}_{:=J_t}+ \frac{2}{(d-1)\sigma} \underbrace{\int_0^t \sqrt{1-\dot{r}_s^2} dB_s}_{:=M_t}.
\end{equation}
Note that the process $I_t$ is bounded since $\dot{r}_t$ is. Otherwise, by  It\^o's formula, we have
\begin{equation}\label{eqn.rpointcarre}
\dot{r}_t^2 = \dot{r}_0^2 - (d-1)\sigma^2 t + d \sigma^2 \underbrace{\int_0^t (1-\dot{r}_s^2) ds}_{=\langle M \rangle_t} + 2  \underbrace{\int_0^t \frac{f'}{f}(r_s) \dot{r}_s \left (1-\dot{r}_s^2 \right ) ds}_{:=K_t} +2 \sigma \underbrace{\int_0^t \dot{r}_s \sqrt{1-\dot{r}_s^2} dB_s}_{:=N_t}.
\end{equation}
Let us first remark that the martingale $M_t$ and the process $J_t$ can not converge simultaneously when $t$ goes to infinity. Indeed, if it was the case, since $\dot{r}_t^2 \leq 1$, the processes $\langle M \rangle_t$, $K_t$ and $N_t$ would also converge and from Equation (\ref{eqn.rpointcarre}), $\dot{r}_t^2$ would tend to minus infinity, hence the contradiction. 
Thus, if we suppose that $M_t$ is convergent, necessarily $J_t$ is divergent and by Equation (\ref{eqn.radial2}), so is $r_t$. 
Let us suppose now that $M_t$ is not convergent so that the following time change $D_t$ and its inverse $D_t^{-1}$ are well defined for all $t \geq 0$ and both go to infinity when $t$ goes to infinity:
\[
D_t:=\sigma^2 \langle M \rangle_t= \sigma^2 \int_0^t (1-\dot{r}_s^2)ds,  \qquad D^{-1}_t:=\inf \{ s>0, D_s >t\}.
\] 
To simplify the expressions, let us define $\rho_t :=r_{D^{-1}_t} $,  $\dot{\rho}_t :=\dot{r}_{D^{-1}_t}$ and introduce the process 
\[
u_t:= \widetilde{\sigma} \rho_t+\dot{\rho}_t, \quad \hbox{where} \quad \widetilde{\sigma}:=\frac{(d-1)\sigma^2 }{2}.
\]
Then, $u_t$ satisfies the equation
\begin{equation}\label{eqn.u}
d u_t= \frac{1}{\sigma^2} \frac{f'}{f}(\rho_t) dt + d B_t, \quad \hbox{i.e.} \quad d u_t= \frac{1}{\sigma^2} \frac{f'}{f}\left( \widetilde{\sigma}^{-1} (u_t-\dot{\rho}_t)\right) dt + d B_t.
\end{equation}
Now consider the process $v_t$ starting from $v_0=u_0$ and solution of the following stochastic differential equation
\[
d v_s = \frac{d-1}{2} \frac{dt}{1+v_t} + d B_t.
\]
Note that $v_t+1$ is then a standard Bessel process of dimension $d \geq 3$, in particular it is almost surely transient. Otherwise, as already noticed at the end of Section \ref{SectionRotationnalyInvManifolds}, thanks to the curvature hypothesis, we have $f'/f(r) \geq 1/r$. Recalling that $\dot{\rho}_t \in [-1,1]$, we deduce that
\begin{align*}
d (v_t-u_t)^+ &= \left[ \frac{d-1}{2} \frac{1}{1+v_t}  - \frac{1}{\sigma^2} \frac{f'}{f}\left( \widetilde{\sigma}^{-1} (u_t-\dot{\rho}_t)\right)\right] \mathds{1}_{v_t>u_t} dt \\
& \leq  \frac{d-1}{2} \left[ \frac{1}{1+v_t}  - \frac{1}{u_t-\dot{\rho}_t} \right] \mathds{1}_{v_t>u_t} dt \\
& = \frac{d-1}{2} \left[ \frac{(u_t-v_t) -(1+\dot{\rho}_t)}{(1+v_t)(u_t-\dot{\rho}_t)} \right] \mathds{1}_{v_t>u_t} dt \leq 0.
\end{align*}
In other words, we have $u_t \geq v_t$ almost surely. We can then conclude that $u_t$ is transient, and since $\dot{\rho}_t$ is bounded, $\rho_t$ is transient and finally the time changed process $r_t = \rho_{D_t}$ is transient. 

\end{proof}

\begin{remark}\label{rem.horloge}
If $r_t$ is transient almost surely and if moreover the warping function $f$ is such that $f'(r)/f(r)$ goes to zero when $r$ goes to infinity, then Equation (\ref{eqn.rpointcarre}) shows that almost surely
\[
\lim_{t \to +\infty}  \frac{1}{t} \int_0^t (1-\dot{r}_s^2) ds = 1- \frac{1}{d}, \quad i.e. \quad  \lim_{t \to +\infty} \frac{1}{t} \int_0^t \dot{r}_s^2 ds =\frac{1}{d}.
\]
\end{remark}

As in the case of Brownian motion on rotationally symmetric manifolds, the transience of $r_t$ can also be caracterized by the integrability of the inverse of the function $f$.
Since the radial subdiffusion is $2-$dimensional here, things are less simple. Nevertheless, we can charaterize transience of kinetic Brownian motion under a simple monotonicity condition on the logarithmic derivative of $f$.

\begin{prop}\label{pro.translog}
Let $(r_t, \dot{r}_t)_{t \geq 0}$ be the solution of Equation (\ref{eqn.radial}) associated with the function $f$ and starting from $(r_0, \dot{r}_0)  \in (0,+\infty) \times [-1,1]$.
\begin{enumerate}
\item Suppose that there exists a smooth function $g$ on $(0,+\infty)$, such that $g$ is positive and $\log-$concave, and such that $f'/f \geq g'/g$. Then, if $\int_1^{+\infty}  g^{1-d} <+\infty$, the radial process $r_t$ is transient almost surely. 
\item On the contrary, suppose that there exists a smooth function $h$ on $(0,+\infty)$, such that $h$ is positive and $\log-$concave, and such that $h'/h \geq f'/f $. Then, if $\int_1^{+\infty}  h^{1-d} =+\infty$, almost surely the radial process $r_t$ is not transient. 
\item In particular, if the warping function $f$ is $\log-$concave, then the radial process $r_t$ is almost surely transient if and only if $\int_1^{+\infty}  f^{1-d} <+\infty$.
\end{enumerate}
\end{prop}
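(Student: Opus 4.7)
The plan is to reduce all three claims to a one-dimensional comparison argument, extending the strategy used in the proof of Proposition \ref{pro.transneg}. First apply the time change $D_t = \sigma^2\int_0^t(1-\dot r_s^2)\,ds$ and set $\rho_t := r_{D_t^{-1}}$, $\dot\rho_t := \dot r_{D_t^{-1}}$, and $u_t := \widetilde{\sigma}\rho_t + \dot\rho_t$ with $\widetilde{\sigma} = (d-1)\sigma^2/2$. As recorded in Proposition \ref{pro.transneg}, $u_t$ then solves the scalar SDE
\[
du_t = \frac{1}{\sigma^2}\,\frac{f'}{f}(\rho_t)\,dt + dB_t, \qquad \rho_t = \widetilde{\sigma}^{-1}(u_t - \dot\rho_t),
\]
with $\dot\rho_t\in[-1,1]$, so that $|\widetilde{\sigma}\rho_t - u_t|\le 1$ and transience of $r_t$ is equivalent to $u_t \to +\infty$. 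The case where the martingale $M$ in \eqref{eqn.radial2} is convergent is handled separately exactly as in Proposition \ref{pro.transneg} and forces transience of $r_t$ directly; I may therefore assume $D_t\to+\infty$ and focus on $u_t$.

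For part (1), the log-concavity of $g$ makes $g'/g$ non-increasing on $(0,+\infty)$. Introduce the auxiliary one-dimensional diffusion
\[
dw_t = \frac{1}{\sigma^2}\,\frac{g'}{g}\!\left(\frac{w_t+1}{\widetilde{\sigma}}\right)dt + dB_t, \qquad w_0 = u_0,
\]
on $(-1,+\infty)$. On the event $\{w_t > u_t\}$ we have $(w_t+1)/\widetilde{\sigma} > (u_t+1)/\widetilde{\sigma} \ge \rho_t$, so monotonicity of $g'/g$ together with $f'/f \ge g'/g$ gives
\[
\frac{g'}{g}\!\left(\frac{w_t+1}{\widetilde{\sigma}}\right) \;\le\; \frac{g'}{g}(\rho_t) \;\le\; \frac{f'}{f}(\rho_t).
\]
The Tanaka-type computation of $d(w_t - u_t)^+$ carried out in the proof of Proposition \ref{pro.transneg} then yields $u_t \ge w_t$ almost surely. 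Since $2\widetilde{\sigma}/\sigma^2 = d-1$, the scale derivative of $w_t$ is $s'(w) = g((w+1)/\widetilde{\sigma})^{-(d-1)}$; after the substitution $y=(w+1)/\widetilde{\sigma}$, its mass at $+\infty$ is a positive constant times $\int^{+\infty} g(y)^{1-d}\,dy$, finite by hypothesis. Since the left boundary $-1$ is entrance ($g(0)=0$, $g'(0)=1$ force $g'/g\to+\infty$ at $0$), the standard scale-function criterion for one-dimensional diffusions gives $w_t\to+\infty$ almost surely; therefore $u_t\to+\infty$ and $r_t$ is transient.

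Part (2) is established symmetrically by comparing $u_t$ with
\[
d\bar w_t = \frac{1}{\sigma^2}\,\frac{h'}{h}\!\left(\frac{\bar w_t-1}{\widetilde{\sigma}}\right)dt + dB_t,
\]
after extending $h'/h$ by a positive constant below some small $\eps>0$ so that $\bar w_t$ lives on all of $\RR$ without affecting the scale behaviour at $+\infty$. Using $h'/h \ge f'/f$ and log-concavity of $h$ (so that the lower bound $\rho_t \ge (u_t-1)/\widetilde{\sigma}$ feeds into the decreasing $h'/h$ the right way), the same Tanaka argument yields $u_t \le \bar w_t$ whenever $u_t > 1$. The scale function of $\bar w_t$ now has infinite mass at $+\infty$ because $\int^{+\infty} h^{1-d} = +\infty$, so $\bar w_t$ is recurrent and $\liminf \bar w_t < +\infty$ almost surely; if $u_t$ tended to $+\infty$, it would eventually satisfy $u_t>1$, forcing $\bar w_t\to+\infty$ by comparison and contradicting its recurrence. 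Hence $r_t$ is not transient, and part (3) follows at once by setting $g=h=f$ in parts (1) and (2). The main technical obstacle will be the careful handling of the boundary behaviour of the comparison diffusions and the localization needed in part (2); once that is taken care of, the argument reduces entirely to the standard scale-function analysis of the one-dimensional comparison processes.
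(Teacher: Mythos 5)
Your argument is essentially the paper's: the same time change and auxiliary process $u_t$, the same comparison diffusions with drifts $\frac{1}{\sigma^2}\frac{g'}{g}\big(\widetilde{\sigma}^{-1}(\cdot+1)\big)$ and $\frac{1}{\sigma^2}\frac{h'}{h}\big(\widetilde{\sigma}^{-1}(\cdot-1)\big)$, and the same reduction to the one-dimensional scale-function criterion, with your explicit Tanaka-type justification of the comparisons and your recollection of the dichotomy guaranteeing $D_t\to\infty$ being welcome extra detail. One caveat: your claim that the left boundary is entrance rests on ``$g(0)=0$, $g'(0)=1$'', which are hypotheses on the warping function $f$, not on the comparison function $g$; to rule out the comparison diffusion reaching $-1$ with positive probability you should either modify $g$ near the origin while preserving $f'/f\ge g'/g$ (possible since $f'/f$ blows up like $1/r$ there) or appeal to the one-dimensional transience criterion exactly as the paper does.
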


\begin{proof}
Let $(r_t, \dot{r}_t)$ be as in the above statement. With the same notations as in the proof of Proposition \ref{pro.transneg}, the process $u_t$ is then solution of Equation (\ref{eqn.u}), namely
\[
d u_t= \frac{1}{\sigma^2} \frac{f'}{f}\left( \widetilde{\sigma}^{-1} (u_t-\dot{\rho}_t)\right) dt + d B_t.
\]
Let us first suppose that there exists a smooth function $g$ on $(0,+\infty)$, such that $g$ is positive, $\log-$concave, and such that $f'/f \geq g'/g$.
Consider the process $v_t$ starting from $v_0=u_0$ and solution of the stochastic differential equation 
\[
d v_t = \frac{1}{\sigma^2} \frac{g'}{g}\left( \widetilde{\sigma}^{-1} (v_t +1) \right)dt +d B_t.
\]
Classical comparison results then show that $u_t \geq v_t$ almost surely for all $t \geq 0$. To conclude, note that the process 
$\widetilde{v}_t:=\widetilde{\sigma}^{-1} (v_{\widetilde{\sigma}^2 t} +1) $ is solution of the equation 
\[
d \widetilde{v}_{t} = \frac{d-1}{2} \frac{g'}{g}\left( \widetilde{v}_{t}  \right)dt +d B_t,
\]
and $\widetilde{v}_t$ is transient if (and only if) $\int_1^{+\infty}  g^{1-d} <+\infty$, see e.g. Theorem 1.1 p. 208 of \cite{pinsky}, hence the first point.
Suppose now that there exists a smooth function $h$ on $(0,+\infty)$, such that $h$ is positive, $\log-$concave, and $h'/h \geq f'/f$. As above, classical comparison results then show that $u_t \leq v_t$ almost surely for all $t \geq 0$, where $v_t$ is now the solution starting from $v_0=u_0$ of the equation
\[
d v_t = \frac{1}{\sigma^2} \frac{h'}{h}\left( \widetilde{\sigma}^{-1} (v_t -1) \right)dt +d B_t.
\]
Again, to conclude, remark that new rescaled process $\widetilde{v}_t:=\widetilde{\sigma}^{-1} (v_{\widetilde{\sigma}^2 t} -1) $ is now solution of the equation
\[
d \widetilde{v}_{t} = \frac{d-1}{2} \frac{h'}{h}\left( \widetilde{v}_{t}  \right)dt +d B_t,
\]
and $\widetilde{v}_{t}$ is recurrent if (and only if) $\int_1^{+\infty}  h^{1-d} =+\infty$, hence the result. Now if the warping function is $\log-$concave, combining the first two points, we deduce that $r_t$ is transient if and only if $\int_1^{+\infty}  f^{1-d} =+\infty$.
\end{proof}

The last result allows to caracterize the transience of the radial process in non negatively curved rotationally symmetric manifolds. Indeed, recall that $K=-f''/f = -(f'/f)'-(f'/f)^2$, thus if $K\geq 0$, we have necessarily $(f'/f)' \leq 0$ i.e. $f$ is $\log-$concave. From Proposition \ref{pro.translog}, we thus deduce the following results which is the second point in Theorem \ref{theo.radial}.
\begin{coro}
If the radial curvature $K$ is globally non negative, then the radial process $r_t$ is transient if and only if $\int_1^{+\infty}  f^{1-d} <+\infty$.
\end{coro}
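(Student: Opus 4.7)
The plan is to deduce the corollary directly from the third item of Proposition \ref{pro.translog} by establishing that non-negative radial curvature forces the warping function $f$ to be $\log$-concave. Once this implication is in hand, the stated equivalence between transience of the radial process and the integrability condition $\int_1^{+\infty} f^{1-d}\,dr < +\infty$ is immediate.

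The key observation is the computation
\[
K(r) = -\frac{f''(r)}{f(r)} = -\left(\frac{f'}{f}\right)'(r) - \left(\frac{f'}{f}\right)^2(r),
\]
which was already recorded at the end of Section \ref{SectionRotationnalyInvManifolds}. From this identity, $K(r) \geq 0$ forces
\[
\left(\frac{f'}{f}\right)'(r) \leq -\left(\frac{f'}{f}\right)^2(r) \leq 0,
\]
so that the logarithmic derivative $f'/f$ is non-increasing on $(0,+\infty)$. This is exactly the $\log$-concavity of $f$.

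Having established that $f$ is $\log$-concave under the curvature assumption, I would simply invoke item 3 of Proposition \ref{pro.translog} to conclude that $r_t$ is transient almost surely if and only if $\int_1^{+\infty} f^{1-d}(r)\,dr < +\infty$. Since the identification of $\log$-concavity with non-positivity of $(f'/f)'$ is elementary and all the probabilistic content has already been carried out in the proof of Proposition \ref{pro.translog} (through the comparison argument with a rescaled Bessel-type diffusion), I do not expect any real obstacle here; the corollary is essentially a translation of a curvature hypothesis into a monotonicity property of $f'/f$.
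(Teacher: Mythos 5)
Your argument is exactly the paper's: the identity $K=-(f'/f)'-(f'/f)^2$ shows that $K\geq 0$ forces $(f'/f)'\leq 0$, i.e.\ $f$ is $\log$-concave, and the equivalence then follows from item 3 of Proposition \ref{pro.translog}. The proposal is correct and matches the paper's proof.
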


\subsubsection{Transience/recurrence of $\dot{r}_t$}

We now describe the asymptotic behaviour of the radial derivative $\dot{r}_t$ in the case where $r_t$ is transient. We first consider the simplest case where the ration $f'/f$ is constant and thus $\dot{r}_t$ is a one dimensional diffusion in $[-1, +1]$. We then have the following lemma:

\begin{lemma}\label{lem.comparexpo}
Let $(r_t, \dot{r}_t)_{t \geq 0}$ be the solution of Equation (\ref{eqn.radial}) associated with the warping function $f$ such that $f'/f(r)\equiv \ell \in \mathbb R$ and starting from $(r_0, \dot{r}_0)  \in (0,+\infty) \times [-1,1]$. Then the process $\dot{r}_t$ is ergodic in the interval $(-1,+1)$ with invariant probability measure
\[
\mu_{\ell}(dx):=\frac{\displaystyle{(1-x^2)^\frac{d-3}{2} e^{\frac{2  \ell}{\sigma^2} x }  }}{\displaystyle{\int_{-1}^{1} (1-x^2)^\frac{d-3}{2} e^{\frac{2  \ell}{\sigma^2} x } dx }} dx.
\]
In particular, if $\ell >0$ we have almost surely
\[
\lim_{t \to +\infty} \frac{r_t}{t} = \int_{-1}^{1} x \,\mu_{\ell}(dx) \in (0,+\infty).
\]
\end{lemma}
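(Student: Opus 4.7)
First I would observe that under the hypothesis $f'/f(r)\equiv\ell$, the stochastic differential equation \eqref{eqn.radial} for $\dot{r}_t$ decouples from $r_t$ and becomes the autonomous one-dimensional SDE
\begin{equation*}
d\dot{r}_t = \sigma\sqrt{1-\dot{r}_t^2}\,dB_t + \Big[\ell(1-\dot{r}_t^2) - \tfrac{\sigma^2}{2}(d-1)\dot{r}_t\Big]dt
\end{equation*}
on $[-1,1]$, with generator $\mathcal{L}_{\dot r}=\tfrac{a(x)}{2}\partial_x^2+b(x)\partial_x$, where $a(x)=\sigma^2(1-x^2)$ and $b(x)=\ell(1-x^2)-\tfrac{\sigma^2}{2}(d-1)x$. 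The plan is to identify the invariant distribution of $\dot{r}_t$ using classical one-dimensional diffusion theory, deduce its ergodicity, and finally read off the asymptotic behaviour of $r_t/t$ by Birkhoff's theorem.

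For the invariant measure I would use the standard formula $\mu(dx)\propto a(x)^{-1}\exp\!\big(\int^{x}\!2b/a\big)\,dx$. A direct computation gives $\tfrac{2b(x)}{a(x)}=\tfrac{2\ell}{\sigma^2}-\tfrac{(d-1)x}{1-x^2}$, with primitive $\tfrac{2\ell}{\sigma^2}x+\tfrac{d-1}{2}\ln(1-x^2)$; exponentiating and dividing by $a(x)$ then yields exactly
\begin{equation*}
\mu_\ell(dx)\propto (1-x^2)^{(d-3)/2}\,e^{(2\ell/\sigma^2)x}\,dx,
\end{equation*}
which is integrable on $(-1,1)$ since $(d-3)/2\geq 0$ for $d\geq 3$. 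Proposition \ref{pro.lifetime.radial} already guarantees that the process never leaves the open interval $(-1,1)$, so the endpoints are inaccessible. As $a$ is smooth and strictly positive on every compact subinterval of $(-1,1)$, the diffusion is irreducible there, and the existence of a finite invariant measure forces positive Harris recurrence with unique invariant law $\mu_\ell$, hence ergodicity.

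For the second assertion, writing $r_t=r_0+\int_0^t\dot{r}_s\,ds$ and applying Birkhoff's theorem to the $\mu_\ell$-integrable function $g(x)=x$ gives, under any initial condition,
\begin{equation*}
\frac{r_t}{t}\;\xrightarrow[t\to\infty]{\mathrm{a.s.}}\;\int_{-1}^{1}x\,\mu_\ell(dx).
\end{equation*}
Positivity of this limit when $\ell>0$ then follows by a symmetry argument: the weight $(1-x^2)^{(d-3)/2}$ is even while $e^{(2\ell/\sigma^2)x}>e^{-(2\ell/\sigma^2)x}$ for every $x\in(0,1]$, so $\mu_\ell$ places strictly more mass on $(0,1)$ than on $(-1,0)$, forcing $\int_{-1}^1 x\,\mu_\ell(dx)>0$. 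The only genuinely delicate point is the degeneracy of both the diffusion coefficient and the drift at $\pm 1$; this, however, is already settled by Proposition \ref{pro.lifetime.radial}, which makes the classical one-dimensional ergodic machinery directly applicable.
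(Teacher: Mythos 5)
Your proof is correct and follows the same route as the paper's (which merely asserts that $\dot r_t$ is an autonomous one-dimensional diffusion with invariant law $\mu_\ell$ and invokes the ergodic theorem); you simply supply the details the paper omits, namely the speed-measure computation identifying $\mu_\ell$, its integrability for $d\geq 3$, and the symmetry argument for the strict positivity of $\int x\,\mu_\ell(dx)$. All of these checks are accurate.
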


\begin{proof}
If $f'/f$ is constant, then Equation (\ref{eqn.radial}) shows that $\dot{r}_t$ is a one-dimensional diffusion and the probability measure $\mu_{\ell}$, whose support is the interval $(-1,+1)$, is invariant, hence the result. The ergodic theorem then ensures that $r_t$ is ballistic almost surely. 
\end{proof}

Now if $f'/f$ is not constant but bounded outside a compact set, we can then deduce that $\dot{r}_t$ is Harris recurrent as soon as $r_t$ is transient.

\begin{prop}\label{pro.recrpoint}
Let $(r_t, \dot{r}_t)_{t \geq 0}$ be the unique strong solution of Equation (\ref{eqn.radial}) starting from $(r_0, \dot{r}_0)  \in (0,+\infty) \times [-1,1]$. Suppose that there exists $R>0$ and $\ell>0$ such that for all $r>R$, we have $| f'(r)/f(r) | < \ell$. Then, if $r_t$ is transient almost surely, the process $\dot{r}_t$ is Harris recurrent in $(-1,+1)$.
\end{prop}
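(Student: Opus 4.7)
My plan is to exploit the transience of $r_t$ in order to sandwich the $\dot r$-component, asymptotically, between two of the explicit ergodic one-dimensional diffusions produced by Lemma \ref{lem.comparexpo}. Since $r_t \to +\infty$ almost surely, there is an almost surely finite (pathwise) random time $T$ after which $r_t > R$, and hence $|f'(r_t)/f(r_t)| < \ell$, for all $t \geq T$. For such $t$, the drift
$$
v \longmapsto -\tfrac{\sigma^2}{2}(d-1)\,v + \tfrac{f'}{f}(r_t)\,(1-v^2)
$$
of $\dot r_t$ in \eqref{eqn.radial} is sandwiched, pointwise in $v \in (-1,+1)$, between the two drifts $\underline b(v) := -\tfrac{\sigma^2}{2}(d-1)v - \ell(1-v^2)$ and $\bar b(v) := -\tfrac{\sigma^2}{2}(d-1)v + \ell(1-v^2)$ associated, via Lemma \ref{lem.comparexpo}, with the frozen ratios $-\ell$ and $+\ell$.

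I would then introduce two auxiliary one-dimensional diffusions $\underline v$ and $\bar v$ on $(-1,+1)$, both started from $\dot r_T$ at time $T$ and both driven by the same Brownian motion $B$ as in \eqref{eqn.radial}, with the common diffusion coefficient $\sigma\sqrt{1-v^2}$ and drifts $\underline b, \bar b$ respectively. An Itô calculation analogous to that of Proposition \ref{pro.lifetime.radial} (or, equivalently, Feller's boundary classification) shows that each of the three processes $\dot r,\underline v,\bar v$ remains in the open interval $(-1,+1)$ for $t \geq T$; the standard pathwise comparison theorem for one-dimensional SDEs with a common, locally Lipschitz (in the interior) diffusion coefficient then yields
$$
\underline v_t \leq \dot r_t \leq \bar v_t, \qquad t \geq T.
$$
By Lemma \ref{lem.comparexpo}, $\underline v$ and $\bar v$ are ergodic on $(-1,+1)$ with invariant probability measures $\mu_{-\ell}$ and $\mu_{\ell}$, both of which charge every neighbourhood of $\pm 1$. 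Hence $\liminf_{t \to \infty} \underline v_t = -1$ and $\limsup_{t \to \infty} \bar v_t = +1$ almost surely, and the sandwich above forces the same extremal behaviour for $\dot r$: $\liminf_{t \to \infty} \dot r_t = -1$ and $\limsup_{t \to \infty} \dot r_t = +1$ almost surely.

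From here, the continuity of $\dot r$ and the intermediate value theorem ensure that $\dot r_t$ crosses every level $a \in (-1,+1)$ along a sequence of times diverging to infinity. The main obstacle, which I expect to require the most care, is to upgrade this point-recurrence into genuine Harris recurrence, since $\dot r$ alone is not Markov. For any open set $A \subset (-1,+1)$ I would pick $a \in A$ and $\delta>0$ with $(a-\delta,a+\delta) \subset A$, and at each crossing time $\tau_n$ of $a$ by $\dot r$ apply the strong Markov property to the genuinely Markovian pair $(r,\dot r)$; the locally bounded coefficients of \eqref{eqn.radial} on $\{r > R\}$ combined with the non-degeneracy of $\sigma\sqrt{1-v^2}$ near $v = a$ yield deterministic constants $\eta,p>0$, uniform in $n$, such that $\int_{\tau_n}^{\tau_n+1} \mathbf{1}_A(\dot r_s)\,ds \geq \eta$ with probability at least $p$. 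A Borel--Cantelli argument over the infinitely many crossings then gives $\int_0^\infty \mathbf{1}_A(\dot r_s)\,ds = +\infty$ almost surely, which is the sought Harris recurrence in $(-1,+1)$.
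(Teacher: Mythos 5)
Your proof is essentially the paper's: after the transience of $r_t$ forces $|f'(r_t)/f(r_t)|<\ell$ for all large times, both arguments sandwich $\dot r_t$ between the two ergodic one-dimensional diffusions of Lemma \ref{lem.comparexpo} with frozen drift ratios $\pm\ell$ and conclude Harris recurrence, your write-up merely making explicit the final upgrade (level crossings, strong Markov property of the pair $(r,\dot r)$, Borel--Cantelli) that the paper leaves implicit. One small slip: to get $\liminf_{t\to\infty}\dot r_t=-1$ you need $\liminf_{t\to\infty}\bar v_t=-1$ (the \emph{upper} comparison), and to get $\limsup_{t\to\infty}\dot r_t=+1$ you need $\limsup_{t\to\infty}\underline v_t=+1$ (the \emph{lower} comparison) — you cited the opposite pair — but since both auxiliary diffusions are ergodic with invariant measures of full support in $(-1,+1)$, all four facts hold and the argument goes through.
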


\begin{proof}
If $r_t$ is transient almost surely, using classical one dimensional comparison results, we have $e^{-}_t \leq \dot{r}_t \leq e^+_t$ for $t$ sufficiently large where the two processes $e^{\pm}_t$ are solutions of the equations
\[
d e^{\pm}_t   = \ds{\sigma \sqrt{1-|e^{\pm}_t |^2} dB_t -\frac{\sigma^2}{2}(d-1) e^{\pm}_t  dt \pm  \ell \left (1-|e^{\pm}_t |^2 \right ) dt}.
\]
From Lemma \ref{lem.comparexpo}, both processes are ergodic in $(-1,+1)$, hence the result.
\end{proof}

\begin{coro}
Let $(r_t, \dot{r}_t)_{t \geq 0}$ be the unique strong solution of Equation (\ref{eqn.radial}) starting from $(r_0, \dot{r}_0)  \in (0,+\infty) \times [-1,1]$. Suppose that the warping function $f$ is $\log-$concave with $\lim_{r \to +\infty} f'/f(r) = \ell>0$. Then the process $\dot{r}_t$ is Harris recurrent in $(-1,+1)$. 
\end{coro}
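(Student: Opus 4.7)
The plan is to obtain the Corollary as a direct combination of Proposition \ref{pro.translog}(3) and Proposition \ref{pro.recrpoint}. The only things to check are that (a) the radial process $r_t$ is transient almost surely, and (b) $f'/f$ is bounded on $(R,+\infty)$ for some $R>0$.

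For (b), I would invoke log-concavity of $f$, which, as already recorded in the paper, forces $f'/f$ to be non-increasing with positive limit $\ell$. Hence for any $\epsilon>0$ there exists $R>0$ such that $\ell \leq f'/f(r) \leq \ell+\epsilon$ for all $r>R$, so $|f'/f|$ is bounded above on $(R,+\infty)$ by $\ell+\epsilon$, which is exactly the hypothesis of Proposition \ref{pro.recrpoint}.

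For (a), the same monotonicity gives the pointwise lower bound $f'/f \geq \ell$ on all of $(0,+\infty)$. Integrating this bound from $r=1$ yields the exponential growth estimate $f(r)\geq f(1)\,e^{\ell(r-1)}$ for $r\geq 1$. Since $d \geq 3$ and $\ell>0$, this implies $\int_1^{+\infty} f^{1-d}(r)\,dr < +\infty$, so Proposition \ref{pro.translog}(3), applied to the log-concave warping function $f$, delivers the almost sure transience of $r_t$. Combining (a) and (b) via Proposition \ref{pro.recrpoint} yields Harris recurrence of $\dot{r}_t$ in $(-1,+1)$.

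No genuine obstacle is anticipated here: the Corollary is essentially a packaging of the two preceding propositions, and the only observation required is that log-concavity together with a \emph{strictly positive} asymptotic slope $\ell$ for $\log f$ simultaneously produces the exponential growth needed for the transience criterion of Proposition \ref{pro.translog}(3) and the uniform control on $f'/f$ at infinity needed for the comparison argument underlying Proposition \ref{pro.recrpoint}.
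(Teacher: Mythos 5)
Your proposal is correct. It has the same overall architecture as the paper's proof (deduce transience of $r_t$, check that $f'/f$ is controlled at infinity, conclude Harris recurrence of $\dot r_t$ by comparison with the ergodic constant-drift diffusions of Lemma \ref{lem.comparexpo}), but the transience step is handled by a different preceding result. The paper gets transience directly: since $\log$-concavity forces $f'/f \geq \ell$ everywhere, $\dot r_t$ is bounded below by its analogue with $f'/f \equiv \ell$, which by Lemma \ref{lem.comparexpo} is ergodic with strictly positive mean velocity; hence $r_t$ is in fact ballistic, a stronger conclusion than mere transience. You instead route through the integral test of Proposition \ref{pro.translog}(3), which requires the extra (easy) observation that $f'/f \geq \ell > 0$ integrates to $f(r) \geq f(1)\,e^{\ell(r-1)}$ and hence $\int_1^{\infty} f^{1-d} < \infty$ since $d \geq 3$. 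Both routes are valid; the paper's is marginally more quantitative (it identifies a linear escape rate for free), while yours is a cleaner black-box assembly of the two stated propositions, and your explicit appeal to Proposition \ref{pro.recrpoint} for the final step is exactly what the paper's phrase ``using again comparison results and Lemma \ref{lem.comparexpo}'' amounts to.
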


\begin{proof}
Using the monotonicity of $f'/f$, we have $f'/f(r_t) \geq \ell$ almost surely. By classical comparison theorems, we deduce that $\dot{r}_t$ is bounded below by its analogue with $f'/f \equiv \ell$. In particular, by Lemma \ref{lem.comparexpo}, we obtain that $r_t$ is transient almost surely. Then using again comparison results and Lemma \ref{lem.comparexpo}, we deduce that $\dot{r}_t$ is Harris recurrent.
\end{proof}

The almost sure asymptotic behaviour of $\dot{r}_t$ is not clear in the case where $f'/f$ is not bounded above but does not go to infinity. Nevertheless, if the logarithmic derivative is non decreasing outside a compact set and goes to infinity with $r$, then one can show that $\dot{r}_t$ converges to one in probability. A typical example of this last case is a smooth function $f$ such that $f(r)=\exp(r^{\beta})$ for $r>1$ with $\beta>1$ so that $f'/f(r) = \beta r^{\beta-1}$ grows to infinity with $r$.

\begin{prop}
If $f$ is convex and if $f'/f$ is non decreasing outside a compact set and goes to infinity with $r$, then $\dot{r}_t$ converges in probability to one as $t$ goes to infinity.
\end{prop}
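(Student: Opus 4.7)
The plan is to exploit the interplay between the two hypotheses: convexity of $f$ forces the transience of $r_t$ via Proposition \ref{pro.transneg}, while the unboundedness of $f'/f$ makes the confining drift $\frac{f'}{f}(r_t)(1-\dot r_t^2)$ on $\dot r_t$ arbitrarily large, pushing $\dot r_t$ toward its upper bound $1$. The technical tool will be a pathwise comparison of $\dot r_t$ to the constant-drift diffusions of Lemma \ref{lem.comparexpo}, whose invariant probabilities $\mu_\ell$ concentrate at $\{1\}$ as $\ell \to \infty$.

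First, by convexity of $f$, Proposition \ref{pro.transneg} yields $r_t \to +\infty$ almost surely. Since $f'/f$ is non-decreasing outside a compact set and tends to infinity, for any $\ell > 0$ there exists $R_\ell$ with $f'/f(r) \geq \ell$ for every $r \geq R_\ell$; and for any $\delta > 0$ one can pick a deterministic time $T_0 = T_0(\ell, \delta)$ such that the event
\[
A_{\ell,\delta} := \bigl\{ r_t \geq R_\ell \text{ for all } t \geq T_0 \bigr\}
\]
has probability at least $1 - \delta$.

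Next, introduce on $[T_0, \infty)$ the auxiliary diffusion $e^\ell_t$ driven by the same Brownian motion $B$ as $\dot r_t$, starting from $e^\ell_{T_0} = \dot r_{T_0}$, with dynamics
\[
d e^\ell_t = \sigma\sqrt{1-(e^\ell_t)^2}\,dB_t + \left(-\frac{\sigma^2(d-1)}{2}\, e^\ell_t + \ell \bigl(1-(e^\ell_t)^2\bigr)\right)dt.
\]
On $A_{\ell,\delta}$ the drift of $\dot r_t$ dominates that of $e^\ell_t$ for every $t \geq T_0$, while the two SDEs share the same diffusion coefficient $\sigma\sqrt{1-x^2}$. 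The Yamada--Watanabe comparison theorem, already invoked in Proposition \ref{pro.recrpoint} for precisely this coefficient, then yields $\dot r_t \geq e^\ell_t$ for every $t\geq T_0$ on $A_{\ell,\delta}$. By Lemma \ref{lem.comparexpo}, the process $e^\ell$ is ergodic in $(-1,1)$ with invariant measure $\mu_\ell(dx) \propto (1-x^2)^{(d-3)/2} e^{2\ell x / \sigma^2}\,dx$, so $\mathbb{P}(e^\ell_t \geq 1 - \epsilon)$ converges to $\mu_\ell([1-\epsilon, 1])$ as $t \to \infty$.

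Since $\dot r_t \in [-1, 1]$, convergence in probability to $1$ amounts to $\mathbb{P}(\dot r_t \geq 1 - \epsilon) \to 1$ for every $\epsilon > 0$. For $t \geq T_0$ the two previous steps combine into
\[
\mathbb{P}(\dot r_t \geq 1-\epsilon) \geq \mathbb{P}(e^\ell_t \geq 1-\epsilon,\, A_{\ell,\delta}) \geq \mathbb{P}(e^\ell_t \geq 1-\epsilon) - \delta,
\]
so $\liminf_{t \to \infty} \mathbb{P}(\dot r_t \geq 1-\epsilon) \geq \mu_\ell([1-\epsilon, 1]) - \delta$. Letting first $\delta \to 0$ and then $\ell \to \infty$, a routine Laplace asymptotic shows $\mu_\ell([1-\epsilon, 1]) \to 1$, which gives the claim. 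I expect the main subtlety to lie in the one-dimensional SDE comparison near the endpoints $\pm 1$, where the diffusion coefficient is only H\"older $1/2$; this is however a classical ingredient of the theory and is already the tool used in Proposition \ref{pro.recrpoint}.
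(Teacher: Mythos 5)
Your proof is correct and follows essentially the same route as the paper: transience of $r_t$ from convexity via Proposition \ref{pro.transneg}, pathwise comparison of $\dot r_t$ with the constant-drift diffusion of Lemma \ref{lem.comparexpo}, ergodicity of the latter, and concentration of $\mu_\ell$ near $1$ as $\ell\to\infty$. Your treatment is in fact slightly more careful than the paper's, which simply asserts the comparison ``for $t$ sufficiently large''; your event $A_{\ell,\delta}$ and deterministic time $T_0$ make the handling of the random entrance time into $\{r\geq R_\ell\}$ explicit.
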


\begin{proof}
If $f$ is convex, we know that $r_t$ is transient almost surely by Proposition \ref{pro.transneg}. In particular, since $f'/f(r)$ goes to infinity with $r$, by classical comparison results, we get that for all $\ell>0$ and for all $t$ sufficiently large, $\dot{r}_t\geq u^{\ell}_t$ where $u^{\ell}_t$ is the solution of Equation (\ref{eqn.radial}) associated with a warping function whose logarithmic derivative is constant equal to $\ell$. Therefore, if $\varepsilon>0$, for $t$ sufficiently large, we have 
\[
\mathbb P( \dot{r}_t \geq 1- \varepsilon) \geq \mathbb P\big( u_t^{\ell} \geq 1- \varepsilon\big) \xrightarrow[{t \to +\infty}]{} \mu_{\ell}\big([1-\varepsilon,1]\big),
\]
where the last convergence is a consequence of Lemma \ref{lem.comparexpo} and of the ergodicity of the process $(u_t^{\ell})$.
A direct calculation shows that $\mu_{\ell}([1-\varepsilon,1])$ goes to one as $\ell$ goes to infinity, hence the result.
\end{proof}

\subsubsection{Rate of escape}
We conclude this section dedicated to the study of the radial process by expliciting the rate of escape of $r_t$ when transient.
Under some simple extra assumptions on the behaviour of $f'/f$ at infinity, it is indeed possible to caracterize the speed of divergence of $r_t$. 
With the same notations as in the proof of Proposition \ref{pro.transneg}, let $(r_t, \dot{r}_t)$ be the solution of Equation (\ref{eqn.radial}) associated with the function $f$ and starting from $(r_0, \dot{r}_0)  \in (0,+\infty) \times [-1,1]$ and $u_t$ the associated solution of Equation (\ref{eqn.u}). If we assume that 
$f$ is $\log-$concave, then the same arguments than the ones used in the proof of Proposition \ref{pro.translog} show that the speed of divergence of $\widetilde{\sigma}^{-1} u_{\widetilde{\sigma}^2 t}$ and thus the speed of divergence of $\rho_{\widetilde{\sigma}^2 t}$ is the same as the one of the process $\widetilde{v}_t$ solution of 
\begin{equation}\label{eqn.vtilde}
d \widetilde{v}_{t} = \frac{d-1}{2} \frac{f'}{f}\left( \widetilde{v}_{t}  \right)dt +d B_t.
\end{equation}
Recall that $\rho_{t}$ is obtained from $r_t$ by the simple time change
\[
D_t=\sigma^2 \int_0^t (1-\dot{r}^2_s)ds,
\] 
and from Remark \ref{rem.horloge}, we know that almost surely, the clock $D_t$ is asymptotically linear in $t$ as soon as $f'/f(r)$ goes to zero at infinity. In fine, if $f'/f(r)$ decrease to zero as $r$ goes to infinity, up to a scalar factor, the rate of escape of $r_t$ is the same as the one of $\widetilde{v}_{t}$ solution of Equation 
\eqref{eqn.vtilde} given above. So kinetic Brownian motion happens to escape to $\infty$ at the same speed as Brownian motion, up to a multiplicative constant.

\begin{prop}
Let $(r_t, \dot{r}_t)_{t \geq 0}$ be the unique strong solution of Equation (\ref{eqn.radial}) starting from $(r_0, \dot{r}_0)  \in (0,+\infty) \times [-1,1]$. In the following cases, up to a multiplicative constant, the rate of escape of the process $r_t$ is the same as the one of the solution $\widetilde{v}_{t}$ of Equation 
\eqref{eqn.vtilde} :
\begin{enumerate}
\item If $f$ is of polynomial growth at infinity, namely if $f(r)=r^{\beta}$ for $r$ large enough with $(d-1)\beta>1$, then the process $\widetilde{v}_{t}$ behaves asymptotically as a Bessel process of dimension $d':=1+\beta (d-1)$.
\item If $f$ is of subexponential growth at infinity, e.g. if $f(r)=\exp(r^{\beta})$ for $r$ large enough with $0<\beta<1$, then $\widetilde{v}_{t}$ goes to infinity at speed $t^{\frac{1}{2-\beta}}$, in the sense that the ratio of the two terms converges almost surely to a positive deterministic explicit constant. 
\item If $f$ is of exponential growth at infinity, e.g. $f(r)=e^{c r}$ for $c>0$, then $r_t/t$ converges almost surely to a positive deterministic explicit constant.
\end{enumerate}
\end{prop}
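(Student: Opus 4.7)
The strategy is to treat the three regimes by exploiting, in each case, the reduction already set up in the preceding discussion between the radial process $r_t$ and the one-dimensional diffusion $\widetilde{v}_t$ of Equation \eqref{eqn.vtilde}. Under the log-concavity hypothesis on $f$, the comparison and time-change arguments used in the proofs of Propositions \ref{pro.transneg} and \ref{pro.translog} show that the growth of $\rho_t=r_{D_t^{-1}}$ is captured, up to a deterministic rescaling of time, by that of $\widetilde{v}_t$. Moreover, whenever $f'/f(r)\to 0$ as $r\to\infty$, Remark \ref{rem.horloge} ensures that the random clock $D_t=\sigma^2\int_0^t(1-\dot r_s^2)\,ds$ is asymptotically linear in $t$. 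This applies in regimes (1) and (2), so in those cases the rate of escape of $r_t$ matches, up to a positive multiplicative constant, the rate of escape of $\widetilde{v}_t$, and the problem reduces to the analysis of the autonomous scalar SDE \eqref{eqn.vtilde}.

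For case (1), with $f(r)=r^{\beta}$ outside a compact set, the drift in \eqref{eqn.vtilde} is $\frac{(d-1)\beta}{2\widetilde{v}_t}$, which is exactly the drift of a Bessel process of dimension $d'=1+(d-1)\beta$. A standard one-dimensional comparison argument of the type used in the proof of Proposition \ref{pro.translog} (see also \cite{pinsky}) sandwiches $\widetilde{v}_t$ between two genuine Bessel processes of dimension $d'$ outside a compact set, yielding the asymptotic equivalence with a Bessel process; the transience needed for these comparisons to be meaningful follows from $(d-1)\beta>1$, i.e. from $d'>2$.

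For case (2), with $f(r)=\exp(r^{\beta})$ and $0<\beta<1$, the drift in \eqref{eqn.vtilde} becomes $\frac{d-1}{2}\beta\,\widetilde{v}_t^{\beta-1}$. The natural thing to do is to apply It\^o's formula to $\Phi(x)=x^{2-\beta}/(2-\beta)$, which yields
\[
d\Phi(\widetilde{v}_t)=\frac{(d-1)\beta}{2}\,dt+\widetilde{v}_t^{\,1-\beta}\,dB_t+\frac{1-\beta}{2}\,\widetilde{v}_t^{-\beta}\,dt.
\]
Since $\widetilde{v}_t\to\infty$, the last term integrates to $o(t)$; the martingale part has bracket $\int_0^t\widetilde{v}_s^{2(1-\beta)}\,ds$, which a crude polynomial a priori bound $\widetilde{v}_s\leq C\,s^{1/(2-\beta)}$ forces to be $O(t^{(4-3\beta)/(2-\beta)})=o(t^2)$ as soon as $\beta>0$. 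A strong law of large numbers for continuous martingales then gives $\Phi(\widetilde{v}_t)/t\to\frac{(d-1)\beta}{2}$ almost surely, and rearranging produces the explicit a.s.\ limit $\widetilde{v}_t/t^{1/(2-\beta)}\to\big(\frac{(d-1)\beta(2-\beta)}{2}\big)^{1/(2-\beta)}$.

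For case (3), with $f(r)=e^{cr}$ at infinity, the ratio $f'/f$ no longer tends to zero, so the clock $D_t$ need not be asymptotically linear and the reduction to $\widetilde{v}_t$ is no longer the right tool. Instead, one works directly with the radial SDE \eqref{eqn.radial}: outside a compact set, $f'/f\equiv c$, so Proposition \ref{pro.recrpoint} applies and $\dot r_t$ is Harris recurrent on $(-1,1)$ with asymptotic law $\mu_c$ given by Lemma \ref{lem.comparexpo}. Applying the ergodic theorem for Harris recurrent diffusions to the additive functional $\frac{1}{t}\int_0^t\dot r_s\,ds=(r_t-r_0)/t$ then produces the a.s.\ limit $r_t/t\to\int_{-1}^{1}x\,\mu_c(dx)$, which is explicit and strictly positive thanks to $c>0$. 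The main obstacle in the overall argument is case (2): one has to justify rigorously, via a bootstrap, the polynomial lower bound on $\widetilde{v}_t$ needed to make the martingale bracket and the correction term genuinely negligible with respect to the linear drift. A crude lower comparison with a Bessel-type diffusion (available from the log-concavity of $f$) provides a first polynomial lower bound, which fed back into the It\^o expansion yields the sharp almost-sure equivalent.
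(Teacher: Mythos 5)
Your treatment of cases (1) and (3) is fine. Case (1) is immediate since the equation \emph{is} the Bessel equation outside a compact set, and your case (3) actually takes a cleaner route than the paper: rather than invoking the reduction to $\widetilde{v}_t$ (whose justification in the preamble uses $f'/f\to 0$, which fails here), you work directly with the radial SDE via Lemma \ref{lem.comparexpo} and the ergodic theorem applied to $\tfrac{1}{t}\int_0^t\dot r_s\,ds=(r_t-r_0)/t$; that is a legitimate and arguably tighter argument for the assertion about $r_t/t$.

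The gap is in case (2). The bound you feed into the bracket, $\widetilde{v}_s\leq C\,s^{1/(2-\beta)}$, is not a ``crude a priori bound'': it is exactly the sharp upper rate you are trying to prove, so as written the argument is circular. The only estimate genuinely available a priori is $\widetilde{v}_t=o(t)$ (sublinear drift plus transience plus the law of the iterated logarithm), which gives $\langle M\rangle_t=o(t^{3-2\beta})$, and $3-2\beta<2$ only when $\beta>1/2$; for $\beta\leq 1/2$ the argument does not close. Your closing remark compounds the problem by asking for a polynomial \emph{lower} bound on $\widetilde{v}_t$: a lower bound does not control $\int_0^t\widetilde{v}_s^{\,2-2\beta}\,ds$ from above (the exponent $2-2\beta$ is positive), so it cannot by itself make the martingale bracket negligible. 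The paper closes the loop differently: having obtained the sharp rate for $\beta>1/2$, it compares solutions associated with two exponents $\beta'<\beta$ started at the same point to get $\widetilde{v}'_t\leq\widetilde{v}_t=O\big(t^{1/(2-\beta)}\big)$, injects this upper bound into $\langle M'\rangle_t$, and iterates finitely many times to reach all $\beta\in(0,1)$. (A lower-bound route can be made to work, but only indirectly: a Bessel-type lower bound $\widetilde{v}_s\gtrsim s^{1/2-\epsilon}$ bounds the \emph{drift integral} $\int_0^t\widetilde{v}_s^{\,\beta-1}\,ds$ from above in the original SDE, which then yields an upper bound $\widetilde{v}_t=O\big(t^{(1+\beta)/2+\epsilon'}\big)$ sufficient to control the bracket; you would need to spell this out, since ``fed back into the It\^o expansion'' as stated does not produce an upper bound.)
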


\begin{proof}
In the three cases, we know from Theorem  \ref{theo.classical} that the process $\widetilde{v}_{t}$ is almost surely transient. In the first case where $f(r)=r^{\beta}$, Equation \eqref{eqn.vtilde} reads
\[
d \widetilde{v}_{t} = \frac{(d-1)\beta}{2}  \frac{dt}{\widetilde{v}_{t}} +d B_t,
\]
i.e. $\widetilde{v}_{t}$ is a Bessel process of dimension $d'= (d-1)\beta + 1$, hence the result. In the second case where $f(r)=\exp(r^{\beta})$ for $r$ large enough, we have for $t$ large enough
\begin{equation}\label{eqn.subexpo1}
d \widetilde{v}_{t} = \frac{(d-1)\beta}{2}  \frac{dt}{\widetilde{v}_{t}^{1-\beta}} +d B_t,
\end{equation}
or equivalently if $H(x):=x^{2-\beta}$ : 
\begin{equation}\label{eqn.subexpo2}
d H( \widetilde{v}_{t}) = \frac{2-\beta}{2} \left((d-1)\beta + \frac{1-\beta}{ \widetilde{v}_{t}^{\beta}}\right) dt + (2-\beta)  \widetilde{v}_{t}^{1-\beta} dB_t
\end{equation}
Since $\widetilde{v}_{t}$ is transient almost surely, from Equation \eqref{eqn.subexpo2} we have
\begin{equation}
 H( \widetilde{v}_{t}) =  \left( \frac{\beta(2-\beta)(d-1)}{2}  +o(1) \right) t + M_t, 
\end{equation}
where $M_t$ is a martingale with bracket 
\[
\langle M\rangle_t =  (2-\beta)^2 \int_0^t  \widetilde{v}_{s}^{2- 2\beta} ds.
\]
Otherwise, from Equation \eqref{eqn.subexpo1}, we have that $\widetilde{v}_{t} = o(t)$ almost surely as $t$ goes to infinity.
Injecting this first estimate in the expression for the bracket of $M_t$, we get that $M_t/t$ converges almost surely to zero as soon as $1/2 < \beta <1$, in other words, as $t$ goes to infinity, we have almost surely
\begin{equation}\label{eqn.escape}
\lim_{t\to +\infty}  \frac{\widetilde{v}_{t}^{2-\beta} }{t}=   \frac{\beta(2-\beta)(d-1)}{2}.
\end{equation}
Now if $0<\beta'\leq 1/2 <\beta$, classical comparison results ensure that the solutions $\widetilde{v}_t$ and $\widetilde{v}'_t$ of \eqref{eqn.subexpo1} associated with $\beta$ and $\beta'$ respectively, and starting from a same point, satisfy $\widetilde{v}'_t \leq \widetilde{v}_t$ almost surely for all $t \geq 0$. As before, we have 
\begin{equation}
 H( \widetilde{v}_{t}') =  \left( \frac{\beta'(2-\beta')(d-1)}{2}  +o(1) \right) t + M'_t, 
\end{equation}
where 
\[
\langle M'\rangle_t =  (2-\beta')^2 \int_0^t  \left(\widetilde{v}_{s} '\right)^{2-2\beta'} ds \leq (2-\beta')^2 \int_0^t  \widetilde{v}_{s}^{2-2\beta'} ds.
\]
Since $\beta>1/2$, we know that almost surely $\widetilde{v}_{t} =O(t^{1/(2-\beta)})$ and injecting this new estimate in the expression of $\langle M'\rangle_t$, we get that $M'_t/t$ goes almost surely to zero as $t$ goes to infinity as soon as $1/4<\beta' \leq 1/2$. Iterating the above argument, we eventually obtain that
the almost sure convergence \eqref{eqn.escape} holds true for all $0<\beta<1$. Finally, in the third case where the warping function has exponential growth, the logarithmic  derivative in Equation \eqref{eqn.vtilde} is constant, and $\widetilde{v}_{t}$ is then trivially ballistic, hence the result.
\end{proof}

\subsection{Asymptotics of the angular components} \label{sec.angular}
Let us now describe the asymptotic behavior of the angular components in function of the geometry of the underlying manifold. 
From Equation \eqref{eqn.metric}, we have
\[
\theta_t = \theta_0 + \int_{0}^t \frac{\dot{\theta}_s}{|\dot{\theta}_s|} \frac{\sqrt{1-\dot{r}_s^2}}{f(r_s)}ds.
\]
Thus, if we introduce the new clock $\displaystyle{C_t:=\int_0^t  \frac{\sqrt{1-\dot{r}_s^2}}{f(r_s)}ds}$ with inverse $C_t^{-1}$, we have
\[
\theta_{t} = \theta_0 + \int_{0}^{C_t} \frac{\dot{\theta}_{C_s^{-1}}}{|\dot{\theta}_{C_s^{-1}}|}  ds.
\]
Since the last integrand is bounded by one, we deduce that $\theta_t$ converges to a random point $\theta_{\infty}$ on the sphere as soon as $C_t$ converges when $t$ goes to infinity. Therefore, a sufficient condition for the almost sure convergence of the angle is the almost sure finiteness of $C_t$ as $t$ goes to infinity.

\begin{remark}\label{rem.convangle}
It is tempting to say that the finiteness of $C_t$ as $t$ goes to infinity is also a necessary condition for the almost sure convergence of the angle. For example, if $C_t$ goes to infinity and $\theta_t$ converges to $\theta_{\infty}$, in Equation \eqref{eqn.angular} describing the evolution of $\dot{\theta}_t/|\dot{\theta}_t|$, the first term of the right hand side would go to infinity in the direction of $\theta_{\infty}$ whereas the last terms would be asymptotically orthogonal to $\theta_{\infty}$ (see Remark \ref{rem.repmart}), which seems in contradiction with the boundedness of the left hand side. Unfortunately, we were not able to give a clear proof of this fact. Nevertheless,  the next proposition, which completes the proof of Proposition \ref{prop.angular}, 
gives a necessary and sufficient condition, in terms of the integrability of $f$, for the almost sure convergence of the clock $C_t$.
\end{remark}

\begin{prop}	\label{pro.convangle}
Suppose that the warping function $f$ is $\log-$concave and satisfies the integrability criterion $\int_1^{+\infty}f^{1-d}(\rho)d\rho <+\infty$, then the clock $C_t$ converges almost surely if and only if 
\[
I_d(f):=\int_1^{+\infty} f^{d-2}(r) \left( \int_r^{+\infty} f^{1-d}(\rho)d\rho \right) dr <+\infty.
\]
\end{prop}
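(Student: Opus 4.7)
The plan is to reduce the convergence question for $C_t$ to a classical occupation–time computation for a one–dimensional transient diffusion, for which the criterion $I_d(f)<\infty$ is standard. First, under the log-concavity of $f$ and the assumption $\int_1^\infty f^{1-d}(\rho)d\rho<\infty$, Proposition \ref{pro.translog} guarantees that $r_t$ is transient and Proposition \ref{pro.recrpoint} that $\dot r_t$ is Harris recurrent in $(-1,1)$. In particular the additive functional $D_t:=\sigma^2\int_0^t(1-\dot r_s^2)\,ds$ diverges a.s. Performing the time change $u=D_s$ with $\rho_u:=r_{D_u^{-1}}$ and $\dot\rho_u:=\dot r_{D_u^{-1}}$ in the definition of $C_t$ yields the explicit identity
$$
C_t \;=\; \frac{1}{\sigma^2}\int_0^{D_t}\frac{du}{f(\rho_u)\sqrt{1-\dot\rho_u^2}},
$$
so $C_\infty<\infty$ a.s.\ iff the integral $\int_0^\infty du/[f(\rho_u)\sqrt{1-\dot\rho_u^2}]$ is a.s.\ finite.

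Second, I would compare the time–changed radial component with a genuine one–dimensional diffusion. From the proof of Proposition \ref{pro.translog}, $u_t:=\widetilde\sigma\rho_t+\dot\rho_t$ satisfies an SDE whose drift is controlled from above and below, via the monotonicity of $f'/f$ granted by log-concavity, by the drifts of two shifts of the process $\widetilde v_t$ with generator $\tfrac12\partial^2+\tfrac{d-1}{2}(f'/f)\partial$. This is precisely (a scalar time change of) the radial part of Brownian motion on $\mcM$; its scale function is $f^{1-d}$ and its speed measure $f^{d-1}dr$, so its Green's function behaves like $G(y)\asymp f^{d-1}(y)\int_y^\infty f^{1-d}(\rho)d\rho$ at infinity. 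The occupation–time formula gives
$$
\mathbb{E}\!\left[\int_0^\infty\frac{du}{f(\widetilde v_u)}\right]\;\asymp\;\int_1^\infty f^{d-2}(y)\int_y^\infty f^{1-d}(\rho)d\rho\, dy\;=\;I_d(f),
$$
and a standard zero–one law for additive functionals of transient one–dimensional diffusions upgrades this into the a.s.\ dichotomy $\int_0^\infty du/f(\widetilde v_u)<\infty\Longleftrightarrow I_d(f)<\infty$. Comparison then transfers the same dichotomy to $\int_0^\infty du/f(\rho_u)$.

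Third, the factor $\sqrt{1-\dot\rho_u^2}$ must be absorbed. Because $\dot\rho_u$ is Harris recurrent in $(-1,1)$ with invariant density comparable to $(1-x^2)^{(d-3)/2}$ near the endpoints (cf.\ Lemma \ref{lem.comparexpo}), and $d\geq 3$, the weight $1/\sqrt{1-x^2}$ is integrable against this invariant measure. Splitting time into windows on which $\rho_u$ barely moves while $\dot\rho_u$ thermalizes, an averaging/ergodic argument replaces $1/\sqrt{1-\dot\rho_u^2}$ by its invariant mean, multiplying $\int du/f(\rho_u)$ by a bounded positive factor and hence preserving convergence or divergence. The main obstacle is exactly this last step: one has to rule out that $\dot\rho_u$ spends a non-negligible amount of time near $\pm 1$ precisely when $f(\rho_u)$ is small, and to make the separation of time scales between the ``fast'' velocity and the ``slow'' radial motion rigorous. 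Once this is handled, the equivalence $C_\infty<\infty \Longleftrightarrow I_d(f)<\infty$ follows and the proposition is proved.
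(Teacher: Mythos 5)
Your reduction is on the right track and, in its second half, matches the paper's: after the time change by $D_t=\sigma^2\int_0^t(1-\dot r_s^2)\,ds$ the paper also compares the time-changed radial motion with the one-dimensional diffusion $dy_t=\frac{d-1}{2}\frac{f'}{f}(y_t)\,dt+dB_t$ and identifies the a.s.\ finiteness of $\int^{\infty}dt/f(y_t)$ with $I_d(f)<\infty$ (via a further time change and Feller's explosion test, Theorem 1.5, p.~212 of \cite{pinsky}, rather than a Green-function occupation-time estimate plus a zero--one law, but these are two dressings of the same fact). You should also note that the log-concave case splits: if $f'/f\to\ell>0$ then $r_t$ is ballistic, $C_t$ converges and $I_d(f)<\infty$ trivially, so the real work is the case $\ell=0$, which is what makes $D_t$ asymptotically linear and the comparison with $y_t$ legitimate.

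The genuine gap is exactly the step you flag yourself: absorbing the factor $1/\sqrt{1-\dot\rho_u^2}$. As written, your ``averaging over windows where $\rho_u$ barely moves while $\dot\rho_u$ thermalizes'' is a heuristic, not a proof; there is no separation of time scales to exploit here (velocity relaxation and radial displacement both occur on the same $O(1)$ scale in the clock $u$), and a law-of-large-numbers statement for $\int g(\dot\rho_u)h(\rho_u)\,du$ with $g$ unbounded near $\pm1$ does not follow from Harris recurrence alone, precisely because excursions of $\dot\rho$ near $\pm1$ could conceivably correlate with the times when $f(\rho)$ is small. The paper sidesteps all of this with a short exact computation: applying It\^o's formula to the two semimartingales $\sqrt{1-\dot r_t^2}/f(r_t)$ and $(1-\dot r_t^2)/f(r_t)$, one finds that their drift parts are, up to the bounded coefficients $-\frac{d-1}{2}\sigma^2-2\frac{f'}{f}(r_t)\dot r_t$ and the vanishing correction $f'/f(r_t)\to0$, exactly linear combinations of the three integrands $\frac{1-\dot r^2}{f(r)}$, $\frac{\sqrt{1-\dot r^2}}{f(r)}$ and $\frac{1}{f(r)\sqrt{1-\dot r^2}}$; since the left-hand sides converge (to $0$, by transience) and the martingale parts are controlled, the three integrals $\int^{\infty}\frac{1-\dot r_t^2}{f(r_t)}dt$, $\int^{\infty}\frac{\sqrt{1-\dot r_t^2}}{f(r_t)}dt$ and $\int^{\infty}\frac{dt}{f(r_t)\sqrt{1-\dot r_t^2}}$ converge or diverge simultaneously. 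This replaces your problematic ergodic step by a deterministic pathwise equivalence and lets one work with $\int^{\infty}\frac{1-\dot r_t^2}{f(r_t)}dt=\sigma^{-2}\int^{\infty}\frac{du}{f(\rho_u)}$, where the velocity factor has disappeared. Until you supply an argument of comparable strength for your step three, the proof is incomplete.
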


\begin{proof}
Recall that, by Proposition \ref{pro.translog}, $r_t$ is transient almost surely if $f^{1-d}$ is integrable. 
By hypothesis, the warping function $f(r)$ goes to infinity with $r$. If it is $\log-$concave, the logarithmic derivative $f'/f(r)$ then admits a limit $\ell \geq 0$. Let us first suppose that $\ell>0$. In that case, $f$ has exponential growth and classical comparison results and Lemma \ref{lem.comparexpo} show that $r_t$ is ballistic. Therefore, the process $U_t$ converges almost surely as $t$ goes to infinity and the integral $I_d(f)$ is of course convergent, hence the result. Suppose now that $\ell=0$. Applying It\^o's formula, we have
\[
d \left( \frac{\sqrt{1-\dot{r}_t^2} }{f(r_t)} \right) =   \left( \frac{\sqrt{1-\dot{r}_t^2} }{f(r_t)} \right) \left[ - \frac{d-1}{2} \sigma^2 - 2 \frac{f'}{f}(r_t) \dot{r}_t \right] dt + \frac{\sigma^2}{2}   \left( \frac{(d-2)dt}{f(r_t)\sqrt{1-\dot{r}_t^2}} \right) - \sigma \frac{\dot{r}_t}{f(r_t)} dB_t,
\]
and 
\[
d \left( \frac{1-\dot{r}_t^2 }{f(r_t)} \right) =   \left( \frac{1-\dot{r}_t^2 }{f(r_t)} \right) \left[ - d \sigma^2 - 3 \frac{f'}{f}(r_t) \dot{r}_t \right] dt + \frac{\sigma^2}{2}   \left( \frac{(d-1)dt}{f(r_t)}\right) - 2\sigma \frac{\dot{r}_t \sqrt{1-\dot{r}_t^2}}{f(r_t)} dB_t,
\]
Since $r_t$ is transient and $f'/f(r)$ goes to zero as $r$ goes to infinity, from these two equations, we deduce the almost sure equivalences
\[
\int^{+\infty}\frac{1-\dot{r}_t^2}{f(r_t)} dt <+\infty \Longleftrightarrow \int^{+\infty}\frac{\sqrt{1-\dot{r}_t^2} }{f(r_t)} dt <+\infty \Longleftrightarrow \int^{+\infty}\frac{dt}{f(r_t)\sqrt{1-\dot{r}_t^2} }dt <+\infty.
\]
In other words, with the notations of the proof of Proposition \ref{pro.transneg}, the process $C_t$ converges almost surely as $t$ goes to infinity if and only if 
\[
\int^{+\infty}\frac{\sqrt{1-\dot{r}_t^2} }{f(r_t)} dt = \int^{+\infty}\frac{dt }{f(\rho_t)} <+\infty \;\; a.s.,
\]
where $\rho_t = r_{D^{-1}_t}$ and $D_t = \sigma^2 \int_0^t (1-\dot{r}_s^2)ds$. Otherwise, from Remark \ref{rem.horloge}, the clock $D_t$ is almost surely asymptotically linear in $t$, and from the proofs of Propositions \ref{pro.transneg} and \ref{pro.translog}, the process $\rho_t$ is asymptotically equivalent to 
the process $\widetilde{\sigma}^{-1} u_t$, where $u_t$ is solution of Equation (\ref{eqn.u}). In fine, we have the almost sure equivalence
\[
 \int^{+\infty}\frac{dt }{f(\rho_t)} <+\infty \Longleftrightarrow  \int^{+\infty}\frac{dt }{f(y_t)} <+\infty,
\]
where $y_t$ is the solution of the equation 
\[
d y_t = \frac{d-1}{2} \frac{f'(y_t)}{f(y_t)} dt +dB_t.
\]
The almost sure finiteness of the last integral is then equivalent to the fact that the new time changed diffusion $\widetilde{y}_t$ solution of 
  \[
d \widetilde{y}_t = \frac{d-1}{2} f'(\widetilde{y}_t)dt +\sqrt{f(\widetilde{y}_t)}dB_t,
\]
has an almost surely finite lifetime, which is finally equivalent to the finitness of $I_d(f)$, see e.g. Theorem 1.5 p. 212 of \cite{pinsky}.
\end{proof}

\subsection{Poisson boundary of the process}
\label{sec.poisson}

To conclude this section, we make explicit the Poisson boundary of kinetic Brownian motion in rotationally invariant manifolds whose warping function $f$ is $\log-$concave and for which $f^{1-d}$ is integrable at infinity. We show that the invariant sigma field of the whole kinetic Brownian motion $(z_t)_{t \geq 0}=(x_t, \dot{x}_t)_{t \geq 0}$ coincides almost surely with the sigma field generated by the escape angle when convergent. As announced above, we take advantage here of the powerfull d\'evissage method introduced in \cite{Devissage} that allows to compute the Poisson bourdary of a diffusion process starting from the one of a subdiffusion. Recall Theorem \ref{theo.devA} giving its main range of application. This method is particularly well suited here because, thanks to the warped product structure of rotationally invariant manifolds, the radial process is always a subdiffusion of dimension two of the whole process. The plan of the proof of Theorem \ref{theo.kpoisson} is thus the following : using coupling arguments, prove first that the Poisson boundary of the radial process $(r_t,\dot{r}_t)_{t \geq 0}$ is trivial, then use the d\'evissage method to deduce that the Poisson boundary of the full process is generated by the escape angle.

\subsubsection{Poisson boundary of the radial subdiffusion}

In this first section, we show that if $r_t$ is transient and $\dot{r}_t$ is Harris recurrent, then the Poisson boundary of the radial subdiffusion $(r_t,\dot{r}_t)_{t \geq 0}$ is trivial.
Let us first make a very simple but crucial remark: the first projection $r_t$ increases if and only if $\dot{r}_t$ is non negative so that the typical behaviour a of sample path of the radial subdiffusion looks like Fig. \ref{fig.poisson} below.\par
\begin{figure}[ht]
\begin{center}\hspace{0cm}
\includegraphics[scale=0.3]{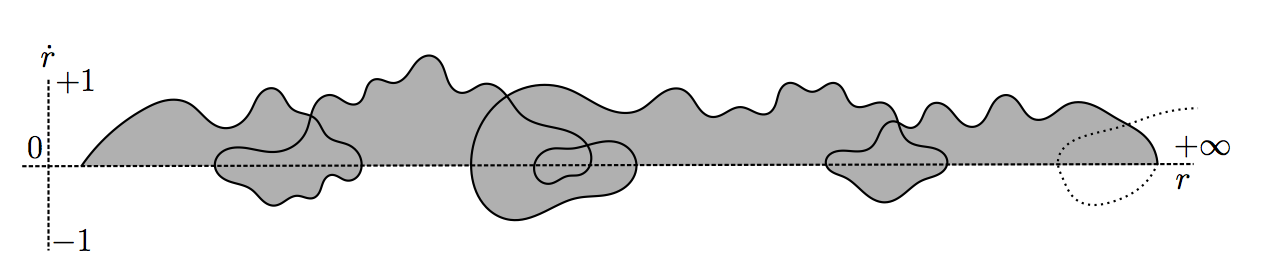}
\end{center}
\caption{Typical behaviour of the sample paths of the radial subdiffusion.\label{fig.poisson}}
\end{figure}
In this situation, we can exhibit a shift coupling between two independant copies of the radial process $(r_t,\dot{r}_t)_{t \geq 0}$. Recall that 
the existence of shift couplings between sample paths is equivalent to the fact that the underlying process has trivial Poisson boundary, see e.g. \cite{cranston}.

\begin{prop}\label{pro.poissontrivradial}
If $r_t$ is transient almost surely and if $\dot{r}_t$ is Harris recurrent in $(-1,+1)$, then the Poisson boundary of the radial subdiffusion $(r_t, \dot{r}_t)$ is trivial.
\end{prop}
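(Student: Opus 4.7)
The strategy is to invoke the equivalence recalled just before the statement between triviality of the Poisson boundary and the existence of shift couplings, and to construct, for any pair of starting points $z, z' \in (0,+\infty)\times(-1,+1)$, an almost-sure shift coupling of two independent copies $Z = (r,\dot{r})$ and $Z' = (r',\dot{r}')$ of the radial subdiffusion started respectively from $z$ and $z'$. Concretely, I shall build random times $T, T'$, a.s.\ finite on a possibly enriched probability space, such that $(r_T,\dot{r}_T) = (r'_{T'},\dot{r}'_{T'})$; the strong Markov property then yields identical laws for the post-$T$ and post-$T'$ trajectories, which is exactly what triviality of the Poisson boundary requires.

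By transience, $r_t, r'_t \to +\infty$ almost surely, so the first hitting times $T_a := \inf\{t\geq 0 : r_t = a\}$ and $T'_a := \inf\{t\geq 0 : r'_t = a\}$ are a.s.\ finite for all $a$ beyond the starting radii, and continuity of sample paths gives $r_{T_a} = r'_{T'_a} = a$. Matching the full state therefore reduces to matching the velocity marginals $\dot{r}_{T_a}$ and $\dot{r}'_{T'_a}$ (which moreover lie in $[0,1]$ as $r$ is non-decreasing at its first up-crossing of level $a$). This is where the ``fish-shape'' structure illustrated in Fig.~\ref{fig.poisson} will be exploited, since only the sign structure of $\dot r$ matters for the monotonicity of the radial coordinate between successive crossings.

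The analytical heart of the argument is then a fiber-mixing statement: the law of $\dot{r}_{T_a}$, viewed as a probability measure on $[0,1]$, converges in total variation as $a\to+\infty$ to a limit $\mu_\infty$ that does not depend on the starting point $z$. Granted this, a standard maximal-coupling construction on an enlarged probability space synchronises $\dot{r}_{T_{a_n}}$ and $\dot{r}'_{T'_{a_n}}$ with probability tending to $1$ along a suitable increasing sequence $a_n\to+\infty$; Borel--Cantelli then produces random a.s.\ finite indices $A, A'$ for which $\dot{r}_{T_A} = \dot{r}'_{T'_{A'}}$ almost surely, giving the desired shift coupling with $T = T_A$ and $T' = T'_{A'}$.

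The main obstacle will be proving the fiber-mixing statement itself. The delicate point is that $\dot{r}_t$ is \emph{not} an autonomous one-dimensional diffusion: its drift $(f'/f)(r_t)(1-\dot{r}_t^2) - \frac{\sigma^2(d-1)}{2}\dot{r}_t$ depends on the non-recurrent component $r_t$, so the classical ergodic arguments for one-dimensional diffusions do not apply directly. I would address this by a pathwise coupling of two velocity processes driven by the same Brownian motion but starting from different initial values, exploiting the non-degeneracy of the diffusion coefficient $\sigma\sqrt{1-\dot{r}^2}$ on compact subsets of $(-1,1)$ together with the assumed Harris recurrence of $\dot{r}_t$, which guarantees infinitely many return visits of both copies to any prescribed compact window of $(-1,1)$. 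On such windows a synchronous or reflection coupling succeeds in finite time, and the radial offset accumulated before the velocities merge is then absorbed into the freedom to randomise the level $a$ used in the definition of the hitting times.
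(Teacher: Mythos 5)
Your reduction — triviality of the Poisson boundary via a shift coupling, and matching of the two copies at first hitting times of a common radial level so that only the velocity marginals remain to be matched — is sound, and it is also the paper's starting point. But your proof then rests entirely on the ``fiber-mixing'' lemma, which you do not prove and whose proof sketch would fail. First, as stated the lemma is too strong for the hypotheses: the proposition assumes only transience of $r_t$ and Harris recurrence of $\dot r_t$, with no asymptotic homogeneity of $f'/f$; if $f'/f$ oscillates boundedly, the law of $\dot r_{T_a}$ has no reason to converge as $a\to+\infty$, so there need not exist a universal $\mu_\infty$. What you actually need is merging in total variation of the two laws issued from $z$ and $z'$, which is weaker but is still the whole difficulty. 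Second, the mechanism you propose for it --- driving the two velocity processes by the same Brownian motion --- founders on exactly the non-autonomy you yourself point out: the drift of $\dot r$ contains $(f'/f)(r_t)$, the two copies carry different radial components, so even if the velocities meet they separate immediately, and a meeting in velocity alone is not a meeting of the state $(r,\dot r)$; the assertion that the accumulated radial offset is ``absorbed into the freedom to randomise the level $a$'' is not an argument. Third, the iterated maximal-coupling/Borel--Cantelli step is also incomplete: conditionally on the failure of the coupling attempt at level $a_n$, the laws of the two processes are no longer the original ones, so the total-variation bound invoked at level $a_{n+1}$ does not apply to the conditional laws; to iterate one needs a uniform minorization on the fibers (a Harris-type regeneration), which is again not established.

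For contrast, the paper sidesteps all quantitative mixing with a purely pathwise, planar argument exploiting the constraint $dr_t=\dot r_t\,dt$: the curve $t\mapsto(r^1_t,\dot r^1_t)$ can only reach a level $\{r=a\}$, $a\ge r^1_0$, for the first time with $\dot r\ge 0$, so the region it sweeps out has a non-negative upper profile $\alpha(a)$; the second, independent curve is transient in $r$ and Harris recurrent in $\dot r$, hence after first reaching the level $r^1_T$ it must return to $\dot r=0$, and an intermediate-value argument in the $(r,\dot r)$ plane forces the two traces to intersect in finite time --- which is precisely a shift coupling. If you want to complete your route, you would have to prove the merging of the hitting laws and a fiber minorization; the qualitative intersection argument reaches the conclusion with none of that machinery.
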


\begin{proof}
Let $(r^1_t, \dot{r}_t^1)$ and $(r^2_t, \dot{r}_t^2)$ be two independant solutions of Equation (\ref{eqn.radial})  starting from $(r_0^1, \dot{r}_0^1)$ and  $(r_0^2, \dot{r}_0^2)$ respectively. 
Without lost of generality, we can suppose that $r_0^2 \leq r_0^1$.
Let us define $S:=\inf \{ t \geq 0, \, \dot{r}_t^1=1/2\}$ and $T:=\inf \{ t \geq S, \, \dot{r}_t^1=0\}$. Since $\dot{r}_t^1$ is Harris recurrent, $S$ and $T$ are finite almost surely.
Since $r_t^1$ increases if and only if $\dot{r}_t^1$ is non negative, we have almost surely
\[
\inf_{r \geq r_0^1} \alpha(r) \geq 0, \;\; \hbox{where} \;\; \alpha(r):=\sup_{ t \geq 0} \{\dot{r}_t^1, \, r_t^1=r\} \geq 0.
\] 
Geometrically, this means that the top of the grey zone associated with $(r^1_t, \dot{r}_t^1)$ on Fig. \ref{fig.poisson2} below is always non negative.
Since $r_t^2$ goes almost surely to infinity with $t$, the hitting time $T':=\inf \{ t \geq 0, \, r_t^2=r_T^1\}$ is finite almost surely. If $\dot{r}^2_T > \alpha(r_T^1)$, since $\dot{r}^2_t$ is recurrent, the curves $(r^1_t, \dot{r}_t^1)$ and $(r^2_t, \dot{r}_t^2)$ must intersect in finite time. Indeed, $\dot{r}^2_t$ must visit zero after $T'$ and the grey zone is a barrier, this situation is illustrated in red on Fig. \ref{fig.poisson2} below. If $\dot{r}^2_T \leq \alpha(r_T^1)$, then the two curves must have intersected before $T$, this situation is illustrated in green on Fig. \ref{fig.poisson2}. 
\end{proof}

\begin{figure}[ht]
\hspace{2cm}\begin{center}
\includegraphics[scale=0.35]{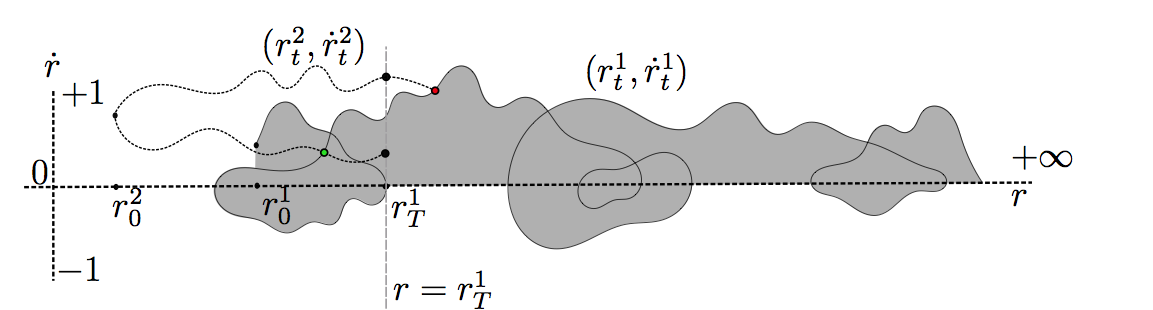}
\end{center}
\caption{Typical behaviour of the sample paths of the radial subdiffusion.\label{fig.poisson2}}
\end{figure}

\begin{coro}\label{cor.poissonradial}
Suppose that the warping function $f$ is $\log-$concave and satisfies the integrability criterion $\int_1^{+\infty}f^{1-d}(\rho)d\rho <+\infty$, then the Poisson boundary of the radial subdiffusion $(r_t, \dot{r}_t)$ is trivial.
\end{coro}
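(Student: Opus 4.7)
The plan is to verify that the two hypotheses of Proposition \ref{pro.poissontrivradial} hold under the assumptions of the corollary, namely that $r_t$ is transient almost surely and $\dot{r}_t$ is Harris recurrent in $(-1,+1)$, so that the triviality of the Poisson boundary follows immediately.

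First I would establish transience of $r_t$. This is a direct application of the third item in Proposition \ref{pro.translog}: since $f$ is assumed $\log$-concave and $\int_1^{+\infty} f^{1-d}(\rho)\,d\rho < +\infty$, the radial process $r_t$ is almost surely transient.

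Next I would show that $\dot{r}_t$ is Harris recurrent by invoking Proposition \ref{pro.recrpoint}, which requires $|f'/f|$ to be bounded outside a compact set. Here I would use the $\log$-concavity of $f$: the function $f'/f$ is non-increasing on $(0,+\infty)$, and since $f(r)\to +\infty$ as $r\to +\infty$, the function $f$ is eventually strictly increasing, so $f'/f \geq 0$ for $r$ large enough. Being non-increasing and eventually non-negative, $f'/f$ converges to some limit $\ell \in [0,+\infty)$ and is therefore bounded on a neighborhood of $+\infty$. Thus there exist $R>0$ and $L>0$ such that $|f'(r)/f(r)| \leq L$ for all $r > R$, and Proposition \ref{pro.recrpoint} applies and gives Harris recurrence of $\dot{r}_t$.

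With both hypotheses verified, Proposition \ref{pro.poissontrivradial} yields the triviality of the Poisson boundary of $(r_t,\dot{r}_t)_{t\geq 0}$, which is exactly the claim of the corollary. No step is a serious obstacle: the entire proof is a straightforward stitching together of Propositions \ref{pro.translog}, \ref{pro.recrpoint} and \ref{pro.poissontrivradial}, with the only mildly delicate observation being the automatic boundedness of the logarithmic derivative $f'/f$ that follows from the combination of $\log$-concavity of $f$ and the fact that $f$ tends to infinity.
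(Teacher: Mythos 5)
Your proposal is correct and follows exactly the same route as the paper: transience of $r_t$ from the third item of Proposition \ref{pro.translog}, Harris recurrence of $\dot{r}_t$ from Proposition \ref{pro.recrpoint} after noting that $\log$-concavity forces $f'/f$ to be bounded outside a compact set, and then Proposition \ref{pro.poissontrivradial}. Your justification of the boundedness of $f'/f$ (monotone, eventually non-negative, hence convergent) is in fact slightly more detailed than the paper's, which simply asserts it.
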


\begin{proof}
From Proposition \ref{pro.translog}, if $f^{1-d}$ is integrable at infinity, the process $r_t$ is transient almost surely. Otherwise, if $f$ is $\log-$concave, then 
$f'/f$ is bounded ouside a compact set and from Proposition \ref{pro.recrpoint}, $\dot{r}_t$ is then Harris recurrent in $(-1,+1)$. Thus the assumptions of Proposition \ref{pro.poissontrivradial} are fulfilled, hence the result.
\end{proof}

\subsubsection{A diffusion lifting kinetic Brownian motion}

Let us first observe that one cannot directly use the devissage method to make explicit the Poisson boundary of the  kinetic Brownian motion $(r_t, \dot{r}_t, \theta_t, \dot{\theta}_t / \vert \dot{\theta}_t \vert)_{t \geq 0}$. Indeed, even if $\theta_t$ converges almost surely to $\theta_{\infty}$ when $t$ goes infinity, the dynamics of $ \dot{\theta}_t / \vert \dot{\theta}_t \vert$ involves terms in $\theta_t$ (to keep $\theta_t$ on the sphere $\mathbb{S}^{d-1})$ and the process $(r_t,\dot{r}_t,\dot{\theta}_t / \vert \dot{\theta}_t \vert)_{t\geq 0}$ is not a subdiffusion  although we do not expect invariant information from this term. In particular, the d\'evissage condition of Theorem \ref{theo.devA} is not fulfilled.
In order to perform the setting of application of the d\'evissage method we thus need to represent the angular process $(\dot{\theta}_t / \vert \dot{\theta}_t \vert, \theta_t)_{t \geq 0} \in T^1 \mathbb{S}^{d-1}$ by a process with values in a bigger space where the d\'evissage method can indeed be applied. Namely, in two steps, we will represent the angular process as a projection of some process $ (b_t, g_t)_{t \geq 0}$ living in $SO(d-1) \times SO(d)$. Recall that $(\epsilon_1, \dots, \epsilon_d)$ denotes the canonical basis of $\mathbb{R}^d$ and let us denote by $SO(d-1)$ (resp.  $SO(d-2)$)  the subgroup of $SO(d)$ made of elements fixing $\epsilon_1$ (resp. fixing both $\epsilon_1$ and $\epsilon_2$).

First remark that the angular process $(\theta_t, \dot{\theta}_t /|\dot{\theta}_t|)_{t \geq 0} $ takes values in $T^1 \mathbb S^{d-1}$, which can be seen as the homogeneous space $SO(d)/SO(d-2)$. Then consider the process $(r_t, \dot{r}_t, e_t)_{t \geq 0}$, where $e_t=(e^1_t, \ldots, e^d_t) \in SO(d)$, whose infinitesimal generator is given by
\begin{equation}\label{eqn.genlift}
\mathcal G= \mathcal L_{(r, \dot{r})} + \alpha(r, \dot{r}) H_0  + \frac{\sigma^2}{2}  \beta(\dot{r}) \sum_{j=3}^d V_j^2,
\end{equation}
where
\[
\alpha(r, \dot{r}) :=\frac{\sqrt{1-\dot{r}^2}}{f(r)}, \qquad \beta(\dot{r}):= \frac{1}{1-\dot{r}^2} 
\]
and $H_0(g):=g {H}_0$ (resp. $V_j(g) = g{  V}_j $)  is the left invariant vector field in $SO(d)$ generated by ${ H}_0:= \epsilon_1 \otimes \epsilon_2 -\epsilon_2 \otimes \epsilon_1$ (resp. ${V}_j:= \epsilon_2 \otimes \epsilon_j-\epsilon_j \otimes \epsilon_2$). In other words, the process $(e_t)_{t \geq 0}$ is solution of the Stratonovich stochastic differential equation 
\begin{equation}\label{enq.e}
d e_t = \alpha(r_t, \dot r_t)   e_t { H}_0   dt    + \beta(\dot{r}_t) \sum_{j=3}^d e_t {V}_j \circ dB_t^j,
\end{equation}
for a standard $(d-2)$-Euclidean Brownian motion $(B^3_t, \ldots, B^d_t)$. Doing so,
the process $(r_t, \dot{r}_t, e_t(\epsilon_1), e_t(\epsilon_2))_{t \geq 0}$ has the same law as the target process $\big(r_t, \dot{r}_t, \theta_t, \dot{\theta}_t /|\dot{\theta}_t|\big)_{t \geq 0}$ i.e. in this first step, we lifted kinetic Brownian motion to a diffusion on $\R^+ \times [-1,1] \times SO(d)$.
Let us now consider the time changed Brownian motion $(b_t)_{t \geq 0}$ on $SO(d-1)$ solution of 
\begin{equation}
d b_t = \beta (\dot r_t) \sum_{i=3}^d b_t { V}_i \circ dB_t^i. \label{bt}
\end{equation}
Finally, viewing $b_t$ as the element $\textrm{diag}(1,b_t)$ of $SO(d)$,  define the process $(g_t)_{t \geq 0}$ on $SO(d)$ by $g_t:=e_t b_t^{-1}$ so that we have
\begin{equation}
d g_t = \alpha(r_t, \dot r_t)  g_t  \,  b_t \, { H}_0 \, b_t^{-1} dt. \label{gt}
\end{equation}
Note that, by construction, we have $g_t(\epsilon_1)=\theta_t$ and $g_t b_t(\epsilon_2) = \dot{\theta_t}/ \vert \dot{\theta_t} \vert$, i.e. after this second step, kinetic Brownian motion is now represented by a diffusion $(r_t, \dot{r}_t, b_t, g_t)_{t \geq 0}$ with values in $\R^+ \times [-1,1] \times SO(d-1)\times SO(d)$.
But, now we are in good position to apply the d\'evissage scheme. Indeed, the process $(r_t, \dot{r}_t, b_t)_{t \geq 0}$ is a now subdiffusion of $(r_t, \dot{r}_t, b_t, g_t)_{t \geq 0}$. Moreover one shows that this diffusion has a trivial Poisson boundary.

\begin{prop}\label{pro.sousdiff}
Suppose that the warping function $f$ is $\log-$concave and satisfies the integrability criterion $\int_1^{+\infty}f^{1-d}(\rho)d\rho <+\infty$, the Poisson boundary of the subdiffusion $(r_t, \dot{r}_t,b_t)_{t \geq 0}$ is trivial.
\end{prop}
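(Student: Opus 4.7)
The plan is to combine the trivial Poisson boundary of the two-dimensional subdiffusion $(r_t,\dot r_t)$, supplied by Corollary~\ref{cor.poissonradial}, with the ergodicity of $b_t$ on the compact Lie group $SO(d-1)$. Let $h:\R_+\times[-1,1]\times SO(d-1)\to\R$ be a bounded harmonic function for the generator
\[
\mathcal G' \;=\; \mathcal L_{(r,\dot r)} + \frac{\sigma^2}{2}\,\beta(\dot r)\sum_{j=3}^{d} V_j^2
\]
of $(r_t,\dot r_t,b_t)$; by hypoellipticity of $\mathcal G'$, $h$ is smooth.

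The first step is a fibrewise averaging. Let $\mu$ denote normalised Haar measure on $SO(d-1)$ and set $\tilde h(r,\dot r):=\int h(r,\dot r,b)\,d\mu(b)$. Since $\mu$ is bi-invariant on the compact group $SO(d-1)$ and each $V_j$ is a left-invariant vector field, one has $\int V_j\varphi\,d\mu = 0$ for every smooth $\varphi$, and therefore $\int V_j^2\varphi\,d\mu = 0$ as well. Integrating the identity $\mathcal G' h\equiv 0$ with respect to $\mu$ in the $b$-variable produces $\mathcal L_{(r,\dot r)}\tilde h\equiv 0$, so by Corollary~\ref{cor.poissonradial}, $\tilde h$ is a constant, say $c$.

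The second step is to upgrade this to $h\equiv c$. Set $H:=h-c$; it is bounded, $\mathcal G'$-harmonic, and centred in the sense that $\int H(r,\dot r,b)\,d\mu(b)=0$ for every $(r,\dot r)$. Conditionally on the full radial trajectory $(r_s,\dot r_s)_{s\geq 0}$, the process $b_t$ coincides in law with $c_{T(t)}$, where $(c_s)_{s\geq 0}$ is a left-invariant Brownian motion on $SO(d-1)$ generated by $\frac{\sigma^2}{2}\sum_{j=3}^d V_j^2$ and $T(t)=\int_0^t\beta(\dot r_s)\,ds\geq t$. The commutator identity $[V_j,V_k]=\ep_k\otimes\ep_j-\ep_j\otimes\ep_k$ shows that the $V_j$'s together with their brackets span $\mathfrak{so}(d-1)$, so H\"ormander's condition is satisfied and the semigroup of $(c_s)$ admits a spectral gap on functions of zero $\mu$-mean (the case $d=3$ is trivial, since $SO(d-1)=SO(2)$ is one-dimensional). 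Applying this mixing bound to the centred function $b\mapsto H(r_t,\dot r_t,b)$ gives
\[
\bigl|\E[H(r_t,\dot r_t,b_t)\mid\mathcal F^{(r,\dot r)}_\infty]\bigr| \;\leq\; C\|H\|_\infty\, e^{-\lambda t}
\]
almost surely, for some positive constants $C,\lambda$. Dominated convergence yields $\E_{(r_0,\dot r_0,b_0)}[H(r_t,\dot r_t,b_t)]\to 0$, which, combined with the martingale identity $\E_{(r_0,\dot r_0,b_0)}[H(r_t,\dot r_t,b_t)] = H(r_0,\dot r_0,b_0)$ for every $t\geq 0$, forces $H\equiv 0$ and hence $h\equiv c$.

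The main obstacle will be justifying the conditional mixing estimate: because the clock $T(t)$ is random, one cannot directly quote a time-homogeneous ergodic theorem, and $\beta(\dot r_s)$ is unbounded on the event $\{\dot r_s\to\pm 1\}$. I would handle this by exploiting the deterministic lower bound $\beta\geq 1$ to compare with an unconditional left-invariant Brownian motion on $SO(d-1)$. Alternatively, quantitative rates can be sidestepped altogether by combining a conditional zero-one law on the tail $\sigma$-field of $b$ given $\mathcal F^{(r,\dot r)}_\infty$, which is trivial because the conditional semigroup is hypoelliptic and strong Feller on the compact manifold $SO(d-1)$, with the $L^1$-martingale convergence of $H(r_t,\dot r_t,b_t)$ to a limit of conditional expectation zero; the limit then vanishes almost surely and the martingale identity again gives $h\equiv c$.
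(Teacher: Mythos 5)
Your argument is correct, but it is not the paper's. The paper proves this proposition by a pathwise shift-coupling argument: by the Cranston--Wang equivalence between shift coupling and triviality of the invariant sigma field, it suffices to shift-couple two independent copies of $(r_t,\dot r_t,b_t)$; Proposition \ref{pro.poissontrivradial} provides a shift coupling of the radial parts, after which the two clocks $\int \beta(\dot r_s)\,ds$ coincide, so the two group components become time changes by the \emph{same} clock of independent left-invariant Brownian motions on the compact group $SO(d-1)$, which couple in finite time by ergodicity. Your route is analytic where the paper's is probabilistic: Haar averaging (right-invariance of $\mu$ annihilating the $V_j^2$ terms) reduces to Corollary \ref{cor.poissonradial}, and the centred remainder $H$ is then killed by a quantitative mixing bound for the conditional fibre motion. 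Both arguments rest on the same two structural facts --- triviality of the radial Poisson boundary and ergodicity of the hypoelliptic Brownian motion on $SO(d-1)$ --- and your treatment of the random clock via the deterministic bound $\beta\ge 1$ is exactly what legitimises the conditional estimate, since only a lower bound on $T(t)$ is needed and the unboundedness of $\beta$ near $\dot r=\pm 1$ is therefore harmless. What the coupling proof buys is softness (no spectral gap, no smoothness of $h$, no differentiation under the integral); what yours buys is that it avoids constructing an explicit coupling of the group components and would survive whenever the fibre motion mixes but is awkward to couple. Two points you should make explicit to close the argument: first, that $\mathcal L_{(r,\dot r)}\tilde h=0$ together with boundedness and non-explosion of the radial diffusion makes $\tilde h(r_t,\dot r_t)$ a bounded martingale, so $\tilde h$ is genuinely invariant for the radial semigroup and Corollary \ref{cor.poissonradial} applies; second, that the uniform rate $C e^{-\lambda s}$ on $\mu$-centred functions comes from strict positivity of the hypoelliptic heat kernel on $SO(d-1)$ at time one (H\"ormander's theorem plus the support theorem give a Doeblin minorisation), which is what lets you apply the bound pathwise to the random function $b\mapsto H(r_t,\dot r_t,b)$.
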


\begin{proof}
Let us first remark that, following \cite{cranston}, it is sufficient to construct a shift coupling between any two $\big(r_t,\dot r_t,b_t\big)$-diffusions started from two different initial conditions to get the triviality of the Poisson boundary. We can construct such a shift coupling in the following way.
Let $\big(r_t^1, \dot{r}_t^1, b^1_t\big)_{t \geq 0}$ and $\big(r_t^2, \dot{r}_t^2, b^2_t\big)_{t \geq 0}$ be two independant versions of the $\big(r_t,\dot r_t,b_t\big)$-diffusion starting from 
$\big(r_0^1, \dot{r}_0^1, b_0^1\big)$ and $\big(r_0^2, \dot{r}_0^2, b_0^2\big)$ respectively.  We know from Proposition \ref{pro.poissontrivradial} that there exists random times $T_1,T_2$ that are finite $\PP_{(r_0^1, \dot{r}_0^1, b^1_0)}$-almost surely, respectively $\PP_{(r_0^2, \dot{r}_0^2, b^2_0)}$-almost surely, and such that one can modify the process $\big(r_t^1, \dot{r}_t^1, b^1_t\big)_{t \geq 0}$ so as to have $\big(r_{T_1+t}^1, \dot{r}_{T_1+t}^1\big) = \big(r_{T_2+t}^2, \dot{r}_{T_2+t}^2\big)$, for all $t \geq 0$. In particular, for all $t \geq 0$, we have 
\[
\int_{T_1}^{T_1+t}\frac{ds}{1-|\dot r_s^1|^2} = \int_{T_2}^{T_2+t}\frac{ds}{1-|\dot r_s^2|^2}.
\]
Since Brownian motion in $SO(d-1)$ is ergodic, one can find two Brownian motions $(c^1(t))_{t\geq 0}$ and $(c^2(t))_{t\geq 0}$ on $SO(d-1)$, started from $b^1_{T_1}$ and $b^2_{T_2}$ respectively, which are independant of the radial subdiffusions and which couple almost surely in finite time. Then, the processes $y^i_t$, $i=1,2$, defined by the formulas
\begin{equation*}
y^i_t = \left \lbrace \begin{array}{ll}
\displaystyle{\big(r_t^i, \dot{r}_t^i, b^i_t\big)}, & \textrm{for } 0\leq t\leq T_i, \\
\\
\displaystyle{ \left(r_t^i, \dot{r}_t^i, c^i \left( \int_{T_i}^t\frac{ds}{1-|\dot r_s^i|^2}\right)\right)}, & \textrm{for }t\geq T_i,
\end{array}\right.
\end{equation*}
have the laws of $\big(r_t,\dot r_t,b_t\big)$-diffusions started from $\big(r_0^1, \dot{r}_0^1, b^1_0\big)$ and $\big(r_0^2, \dot{r}_0^2, b^2_0\big)$, respectively and will couple in finite time almost surely, hence the result.
\end{proof}

\subsubsection{Poisson boundary of the full diffusion}

We can now give the proof of Theorem \ref{theo.kpoisson} describing the Poisson boundary of the full kinetic Brownian motion. As mentioned before, the key tool used in this proof is the \textit{d\'evissage method} introduced in \cite{Devissage}, with the twist that we shall not apply it to kinetic Brownian motion, but rather to the lifted diffusion $( r_{t}, \dot{r}_{t}, g_{t}, b_{t})_{t \geq 0}$ with values $\R_+\times [-1,1]\times SO(d)\times SO(d-1)$ introduced in the last paragraph. In a final step one obtains the Poisson boundary of the kinetic Brownian motion from the Poisson boundary of the this lift.  

\medskip

We have shown in Proposition \ref{pro.sousdiff} that if $f$ is $\log-$concave and satisfies the integrability criterion $\int_1^{+\infty}f^{1-d}(\rho)d\rho <+\infty$, then the Poisson boundary of the subdiffusion $(r_t, \dot{r}_t,b_t)_{t \geq 0}$ is trivial. Moreover, if the second integrability condition $I_d(f)<\infty$ of Proposition \ref{pro.convangle} is satisfied, the clock $C_t$ is almost surely convergent and from Equation \eqref{gt}, the component $g_{t}$ then converges almost surely to $g_{\infty}$ in $SO(d)$.  In other words, the d\'evissage and convergence conditions of Theorem \ref{theo.devA} are satified, with $\mathcal{N}= \mathbb{R}_+ \times [-1,1] \times SO(d-1)$, $G=SO(d)$, with $\big(r_t, \dot{r}_t, b_t\big)_{t \geq 0}$ in the role of the subdiffusion. Moreover, by equations \eqref{bt} and \eqref{gt}, the infinitesimal generator $\mathcal{G}_b$ of the full diffusion $( r_{t}, \dot{r}_{t}, g_{t}, b_{t})_{t \geq 0}$ is very similar to the generator $\mathcal G$ defined in \eqref{eqn.genlift}, except the geodesic flow is now conjugated, namely 
 \[
\mathcal{G}_b:= \mathcal L_{(r, \dot{r})} +  \beta(\dot{r}) \sum_{i=3}^{d} { V}_j^2  + \alpha(r, \dot{r}) H_b,
 \]
where $H_b$ denotes the left invariant vector field on $SO(d)$ defined at point $g$ by the formula $g b H_0 b^{-1} \in T_g SO(d)$. It follows that the equivariance condition \textit{(3)} of theorem \ref{theo.devA} is fulfilled, while it is easy to check that $\mathcal{G}_b$ satisfies H\"ormander condition and, so, is hypoelliptic. As the regularity condition is also fulfilled, the d\'evissage method applies. Namely, Theorem \ref{theo.devA} implies the following statement:
 
 \begin{prop}\label{poisson.en.haut}
 Suppose that the warping function $f$ is $\log$-concave and satisfies
 \[
I_d(f):= \int_1^{+\infty} f^{d-2}(r) \left ( \int_r^{+\infty} f^{1-d}(\rho)d\rho \right ) dr < +\infty.
 \]
Then, the Poisson boundary of the diffusion $(r_t,\dot{r}_t, b_t,g_t)_{t \geq 0}$ coincides almost surely with $\sigma(g_\infty)$.
 \end{prop}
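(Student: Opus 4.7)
The plan is to apply the d\'evissage method, Theorem \ref{theo.devA}, directly to the lifted diffusion $(r_t,\dot{r}_t,b_t,g_t)_{t \geq 0}$, with the state space decomposition $\mathcal{N}\times G$ where $\mathcal{N} = \mathbb{R}_+\times[-1,1]\times SO(d-1)$ and $G = SO(d)$, taking $(r_t,\dot{r}_t,b_t)$ as the subdiffusion on $\mathcal{N}$ and $g_t$ as the $G$-valued part. The proof thus reduces to verifying the four hypotheses of that theorem and invoking it.

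Conditions \emph{(1)} and \emph{(2)} are already at hand. For the d\'evissage condition, observe that $I_d(f)<+\infty$ forces $\int_r^{+\infty}f^{1-d}(\rho)d\rho$ to be finite for some (hence all large) $r$, so $\int_1^{+\infty} f^{1-d}(\rho)d\rho<+\infty$. Combined with log-concavity of $f$, Proposition \ref{pro.sousdiff} applies and yields the triviality of the invariant sigma field of the subdiffusion $(r_t,\dot{r}_t,b_t)$; no auxiliary variable $\ell_\infty$ is needed. For the convergence condition, Proposition \ref{pro.convangle} applies under the same hypotheses and guarantees the almost sure convergence of the clock $C_t=\int_0^t\alpha(r_s,\dot r_s)\,ds$. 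Equation \eqref{gt} then presents $g_t$ as the time-integrated flow of a bounded-norm vector field against $dC_t$, so $g_t$ converges almost surely in $SO(d)$ to some random $g_\infty$.

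The equivariance condition is immediate from the structure of the generator $\mathcal G_b$: the radial piece $\mathcal L_{(r,\dot r)}$ and the horizontal sum $\beta(\dot r)\sum_{j=3}^{d} V_j^2$ do not involve $g$ at all, while the geodesic-flow contribution $\alpha(r,\dot r)H_b$ evaluates at $g$ to $\alpha(r,\dot r)\,g\,(b H_0 b^{-1})$, a left-invariant vector field in the $g$-variable. Hence $\mathcal G_b$ commutes with left translations on $SO(d)$. The crux of the argument is therefore the regularity condition, which we establish via H\"ormander's bracket criterion applied to $\mathcal G_b$. Classically, the left-invariant vector fields $V_3,\dots,V_d$ generate a hypoelliptic Laplacian on $SO(d-1)$, so iterated brackets among them span the Lie subalgebra $\mathfrak{so}(d-1)\subset\mathfrak{so}(d)$. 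The drift $\dot r\,\partial_r$ paired with the diffusive direction in $\dot r$ covers the $(r,\dot r)$-directions after one bracket. For the remaining directions in $T_g SO(d)$, one differentiates the mixed vector field $H_b$ along the $V_j$-flow acting on $b$: as $b$ ranges over $SO(d-1)$, the conjugate $bH_0b^{-1}$ sweeps the subspace $\{\epsilon_1\wedge v:v\in\mathrm{span}(\epsilon_2,\dots,\epsilon_d)\}$, which together with $\mathfrak{so}(d-1)$ spans all of $\mathfrak{so}(d)$. Consequently $\mathcal G_b$ is hypoelliptic and all bounded $\mathcal G_b$-harmonic functions are continuous.

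The main obstacle is precisely this bracket calculation, since the coupled vector field $H_b$ mixes the $b$- and $g$-factors and one has to track carefully how successive brackets propagate between the two to exhaust $T_g SO(d)$. Once the four hypotheses are in hand, Theorem \ref{theo.devA} gives
$$
\mathrm{Inv}\big((r_t,\dot{r}_t,b_t,g_t)_{t\geq 0}\big) = \mathrm{Inv}\big((r_t,\dot{r}_t,b_t)_{t\geq 0}\big)\vee \sigma(g_\infty) = \sigma(g_\infty),
$$
the last equality by the triviality of the subdiffusion's invariant sigma field, which is exactly the conclusion of the proposition.
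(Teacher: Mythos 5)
Your proof is correct and follows essentially the same route as the paper: both apply the d\'evissage theorem to the lifted diffusion with $\mathcal{N}=\mathbb{R}_+\times[-1,1]\times SO(d-1)$ and $G=SO(d)$, citing Proposition \ref{pro.sousdiff} for the d\'evissage condition, Proposition \ref{pro.convangle} and equation \eqref{gt} for the convergence of $g_t$, left-invariance of $\mathcal G_b$ in the $g$-variable for equivariance, and H\"ormander's condition for regularity. Your sketch of the bracket computation merely fleshes out what the paper dismisses as ``easy to check.''
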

 
To finally deduce the Poisson boundary of kinetic Brownian motion starting from the Poisson boundary of its lift $(r_t,\dot{r}_t, b_t,g_t)_{t \geq 0}$ with values in $\R_+\times [-1,1]\times SO(d)\times SO(d-1)$, it is convenient to introduce the map $\chi$ from $\R^+ \times [-1,1] \times SO(d) \times SO(d-1)$ to the base space $\R^+ \times [-1,1] \times  T^1\mathbb{S}^{d-1}$, given by the formula 
 \[
\chi\big(r, \dot{r}, g,b\big) = \big(r, \dot{r}, g(\epsilon_1),gb(\epsilon_2) \big).
 \]
Let now $f : \R^+ \times [-1,1] \times  T^1\mathbb{S}^{d-1} \to \R$ be a bounded harmonic function for the infinitesimal generator of kinetic Brownian motion $(r_t, \dot{r}_t, \theta_t, \dot{\theta}_t/|\dot{\theta}_t|)_{t \geq 0}$. Then, $f \circ \chi$ is $\mathcal{G}_b$-harmonic, and by Proposition \ref{poisson.en.haut} there exists a bounded measurable real-valued function $\textrm{F}$ on $SO(d)$ such that we have
 \[
(f \circ \chi)\big( r, \dot{r}, g, b\big) = \mathbb{E}_{(r,\dot{r}, g, b)}\big[\textrm{F}(g_\infty)\big],
 \] 
for all $(r,\dot{r}, g, b)$. Let now fix $\big(r,\dot{r}, \theta,  \dot{\theta}/ \vert \dot{\theta} \vert\big)$ and $g \in SO(d)$, such that 
 \[
\big(r,\dot{r}, \theta, \dot{\theta} / \vert \dot{\theta} \vert\big) = \chi \big(r, \dot{r}, g, \mathrm{Id}\big),
 \] 
 and remark that we have $\chi \big(r, \dot{r}, g, \mathrm{Id}\big) = \chi \big(r, \dot{r}, gk, k^{-1}\big)$, for any $k\in SO(d-1)$, so one can write
 \begin{align}
 f\big(r,\dot{r}, \theta,  \dot{\theta} / \vert \dot{\theta} \vert \big) = \mathbb{E}_{(r,\dot{r}, gk, k^{-1})} \big[ \textrm{F}(g_\infty) \big], \label{forallk}
 \end{align}
 for all $k\in SO(d-1)$. The conclusion will then come from the following equivariance result.
 \begin{lemma}
 The law of $(g_t)_{t \geq 0}$ under $\mathbb{P}_{(r,\dot{r}, gk,k^{-1})}$ coincides with the law of $\big(g_t k\big)_{t \geq 0}$ under $\mathbb{P}_{(r, \dot{r}, g, \mathrm{Id})}$, for all $k\in SO(d-1)$. 
 \end{lemma}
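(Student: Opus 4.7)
The plan is to build both diffusions on a common probability space, using the same radial subdiffusion $(r_t,\dot r_t)$ started from $(r,\dot r)$ and the same family of independent Brownian motions $B^3,\dots,B^d$, changing only the initial conditions of $b_t$ and $g_t$. On such a coupling I will in fact establish the pathwise identity $g_t = g_t'\, k$, which is strictly stronger than the desired equality in law. The key device is the auxiliary product $e_t := g_t b_t$: applying the Stratonovich Leibniz rule, and using that $g_t$ is of bounded variation so no It\^o correction appears, one computes
\[
de_t \;=\; \alpha(r_t,\dot r_t)\, e_t H_0\, dt \;+\; \beta(\dot r_t)\sum_{i=3}^d e_t V_i\circ dB^i_t,
\]
which is precisely the frame equation \eqref{enq.e}. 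The crucial feature is that this SDE is self-contained in $e_t$: it involves neither $b_t$ nor $g_t$ separately.

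Under $\PP_{(r,\dot r, gk, k^{-1})}$ one has $e_0 = gk\cdot k^{-1} = g$, and under $\PP_{(r,\dot r, g, \mathrm{Id})}$ one has $e_0 = g\cdot \mathrm{Id} = g$. Since both versions of $e_t$ solve the same SDE with the same initial condition, driven by the same noise, pathwise uniqueness forces them to coincide on the coupled space. Separately, equation \eqref{bt} for $b_t$ is left-equivariant: left multiplication of any solution by a fixed element of $SO(d-1)$ yields another solution of the same SDE with correspondingly translated initial condition. Applied with $k^{-1}$, this shows that if $b_t'$ denotes the solution starting from $\mathrm{Id}$, then $b_t = k^{-1} b_t'$ almost surely. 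Combining the two identities one obtains
\[
g_t \;=\; e_t b_t^{-1} \;=\; e_t\,(k^{-1} b_t')^{-1} \;=\; e_t (b_t')^{-1} k \;=\; g_t'\, k
\]
pathwise, and the claimed equality of laws follows.

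The only delicate point is the Stratonovich product rule used to derive the equation for $e_t$: the drift of $g_t$ carries the conjugation $b_t H_0 b_t^{-1}$, and one must verify that the factor $b_t$ contributed by Leibniz cancels $b_t^{-1}$ exactly, leaving simply $e_t H_0$ as the drift coefficient of $e_t$. Once this essentially algebraic verification is carried out, no hypoellipticity, compactness, or probabilistic estimate is needed -- left-invariance of the $b$-SDE on $SO(d-1)$ together with strong uniqueness does the rest.
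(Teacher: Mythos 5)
Your proof is correct and rests on the same equivariance of the system \eqref{bt}--\eqref{gt} under $(g,b)\mapsto(gk^{-1},kb)$ that the paper exploits; routing the computation through the product $e_t=g_tb_t$ (which satisfies the self-contained equation \eqref{enq.e}) together with pathwise uniqueness and left-invariance of \eqref{bt} is just a clean pathwise repackaging of the paper's direct verification that the transformed pair solves the same SDE system started from $(g,\mathrm{Id})$. No gap.
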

 
 \begin{proof}
 Consider the processes $(g_t,b_t)_{t \geq 0}$ starting at $(gk, k^{-1})$, and set $g'_t:= g_t k $ and $b'_t:= kb_t$. Then, using  equations \eqref{bt} and \eqref{gt}, one easily checks that the process $b'_t$ starts from the identity and satisfies the equation
 \begin{align*}
 db'_t &= \beta(\dot{r}_t) \sum_{i=3}^d b'_t { V}_i \circ dB_t^i
 \end{align*}
 while the process $(g'_t)_{t \geq 0}$ starts from $g$ and satisfies the equation
 \begin{align*}
 d g'_t &= \alpha(r_t, \dot{r}_t)  g'_t b'_t {H}_0 (b'_t)^{-1} dt.
 \end{align*}
 So the law of the process $\big(r_t, \dot{r}_t, g'_t,b'_t\big)_{t \geq 0}$ is $\mathbb{P}_{(r,\dot{r}, g,\mathrm{Id})}$, giving the result. 
 \end{proof}
 
 \smallskip
 
It follows from the previous lemma that the law of $g_\infty$ under $\mathbb{P}_{(r,\dot r, gk, k^{-1})}$ coincides with the law of $g_\infty k$ under $\mathbb{P}_{(r,\dot r, g,\mathrm{Id})}$. However, as 
\begin{align}
f\big(r, \dot{r},\theta,  \dot{\theta}/ \vert \dot{\theta} \vert \big) = \mathbb{E}_{(r, \dot{r}, g, \mathrm{Id})}\big[ \textrm{F}(g_{\infty} k)\big],
\end{align} 
for all $k \in SO(d-1)$, from equation \eqref{forallk}, we have
\begin{align}
f\big(r, \dot{r}, \theta,  \dot{\theta}/ \vert \dot{\theta} \vert \big) = \mathbb{E}_{(r,\dot{r},g, \mathrm{Id})} \left [ \int_{SO(d-1)} \textrm{F}(g_{\infty} k)\,dk\right],
\end{align}
where $dk$ stands for the normalized Haar measure on $SO(d-1)$. Now, given any measurable section $S : \mathbb{S}^{d-1}\simeq SO(d)/SO(d-1) \to SO(d)$ to the natural projection, we define a bounded measurable real-valued function $\overline{\textrm{F}}$ setting
\[
\overline{\textrm{F}}(\theta) := \int_{SO(d-1)} \textrm{F}\big(S(\theta)k\big)\,dk.
\]
In those terms, we finally have
\begin{align*}
f\big(r,\dot{r}, \theta, \dot{\theta}/ \vert \dot{\theta} \vert\big) = \mathbb{E}_{(r,\dot{r}, g,\mathrm{Id})} \big[\,\overline{\textrm{F}}\big(g_\infty(\epsilon_1)\big)\Big] = \mathbb{E}_{(r,\dot{r},\dot{\theta}/ \vert \dot{\theta} \vert, \theta)} \Big[\,\overline{\textrm{F}}\big(\theta_{\infty}\big)\Big],
\end{align*}
which shows indeed that the Poisson boundary of kinetic Brownian motion is generated by the escape angle $\theta_\infty$.

\appendix

\section{Kinetic Brownian motion in the hyperbolic plane}
\label{Section2DimHypSpace}
In the preceeding section, we exhibited the almost sure long time asymptotic behavior of the kinetic Brownian motion in rotationally invariant manifolds of dimension $d \geq 3$. As noted in Remark \ref{rem.d2}, the case $d=2$ requires a separate treatment since in that case the norm $|\dot{\theta}_t|$ vanishes almost surely in finite time so that the process $\dot{\theta}_t/|\dot{\theta}_t|$ is not well defined. Nevertheless, this is just a formal difficulty due to the choice of polar coordinates, and an exhaustive study can be done in the same lines as above on a rotationally invariant manifold of dimension 2.  
To emphasize this fact, let us consider the case of $\mathbb H^2$, the hyperbolic space of dimension 2. 
\subsection{Kinetic Brownian motion in $T^1 \mathbb H^2$}
We consider the half-space model of the hyperbolic plane, namely we view $\mathbb H^2$ as the half-space $\{(x,y) \in \mathbb R^2, \, y>0\}$ endowed with the metric $ds^2=y^{-2} (dx^2+dy^2)$. In this coordinates system, the kinetic Brownian motion $(x_t, y_t, \dot{x}_t, \dot{y}_t)$ started at $(x_0, y_0, \dot{x}_0, \dot{y}_0) \in T^1 \mathbb H^2$ is the solution of the system of stochastic differential equations \eqref{eqn.system} which simply writes here:
\begin{equation}\label{eqn.h2}
\left \lbrace
\begin{array}{ll}
d x_t & = \dot{x}_t dt, \\
\\
d y_t & = \dot{y}_t dt, \\
\\
d \dot{x}_t & = \displaystyle{2 \frac{\dot{x}_t \dot{y}_t}{y_t}dt -\frac{\sigma^2}{2} \dot{x}_t dt + \sigma d M^{\dot{x}}_t}, \\
\\
d \dot{y}_t & = \displaystyle{\left(  \frac{\dot{y}_t^2}{y_t}-\frac{\dot{x}_t^2}{y_t}\right) dt -\frac{\sigma^2}{2} \dot{y}_t dt + \sigma d M^{\dot{y}}_t},
\end{array}
\right.
\end{equation} 
where the martingales $M^{\dot{x}}$ and $M^{\dot{y}}$ have the following covariations
\[
d \langle M^{\dot{x}}, \,M^{\dot{x}} \rangle_t = \left( y^2_t - \dot{x}_t^2\right) dt, \quad d \langle M^{\dot{x}}, \,M^{\dot{y}} \rangle_t = -  
\dot{x}_t \dot{y}_t dt, \quad d \langle M^{\dot{y}}, \,M^{\dot{y}} \rangle_t = \left( y^2_t - \dot{y}_t^2\right) dt,
\]
and can thus be represented by a real Brownian motion $(B_{t})_{t \geq 0}$: 
\[
d M^{\dot{x}}_t = \dot{y}_t d B_t, \qquad d M^{\dot{y}}_t = - \dot{x}_t d B_t.  
\]
Recall moreover that the process is parametrized by the arc length so that we have almost surely, for all time $t \geq 0$:
\begin{equation}\label{eqn.metrich2}
\displaystyle{\frac{\dot{x}_t^2+ \dot{y}_t^2}{y_t^2} = 1}.
\end{equation}

Note that since the process can not explode in finite time, we have $y_t \in (0,+\infty)$ for all $t \geq 0$ almost surely. 
As in the case where $d\geq 3$, the study of the process is then made easier thanks to the presence of two subdiffusions:
\begin{lemma}
The kinetic Brownian motion  $(x_t, y_t, \dot{x}_t, \dot{y}_t)$ in $T^1\mathbb H^2$ admits the following subdiffusions of dimension one, two and three respectively:
\[
(u_t)_{t \geq 0}:= \left( \frac{\dot{y}_t}{y_t} \right)_{t \geq 0}, \qquad (y_t, u_t)_{t \geq 0}, \qquad (y_t, \dot{x}_t, \dot{y}_t)_{t \geq 0}.
\]
\end{lemma}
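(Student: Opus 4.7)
The proof proposal is to verify in each case that the dynamics of the claimed subdiffusion close up, i.e.\ that both the drift and the quadratic variation can be written in terms of the variables composing the subdiffusion alone. The main tool is It\^o's formula combined with the metric relation \eqref{eqn.metrich2}.

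The easiest case is the three-dimensional process $(y_t, \dot{x}_t, \dot{y}_t)_{t\geq 0}$: reading off system \eqref{eqn.h2}, the equations for $y_t$, $\dot x_t$ and $\dot y_t$ involve only these three quantities as drift coefficients, and the driving martingales $M^{\dot x}$, $M^{\dot y}$ are themselves expressed as stochastic integrals with respect to the real Brownian motion $B$, with integrands $\dot y_t$ and $-\dot x_t$. In particular the covariation structure only depends on $(y_t,\dot{x}_t,\dot{y}_t)$, so the process is a genuine (autonomous) diffusion. The two-dimensional case $(y_t,u_t)$ then follows from the one-dimensional case $(u_t)$ together with the identity $dy_t = \dot{y}_t\,dt = y_t u_t\,dt$, which is autonomous in the pair $(y_t,u_t)$.

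The substantive computation is the one-dimensional subdiffusion $(u_t)_{t\geq 0}$. Applying It\^o's formula to $u_t = \dot{y}_t/y_t$ and substituting the dynamics of $\dot{y}_t$ and $y_t$ from \eqref{eqn.h2}, one obtains
\[
du_t \;=\; \frac{\dot{y}_t^2-\dot{x}_t^2}{y_t^2}\,dt \;-\; \frac{\sigma^2}{2}\, u_t\, dt \;-\; \frac{\sigma\,\dot{x}_t}{y_t}\, dB_t \;-\; u_t^2\, dt.
\]
At this point the expression still involves the extraneous quantity $\dot{x}_t$. The key step is to use the arc-length constraint \eqref{eqn.metrich2}, which gives $\dot{x}_t^2/y_t^2 = 1-u_t^2$. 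The drift then becomes $-(1-u_t^2)\,dt - \frac{\sigma^2}{2} u_t\, dt$, and the bracket of the martingale term equals $\sigma^2\bigl(1-u_t^2\bigr)\,dt$, so that one can rewrite the equation as
\[
du_t \;=\; -\Bigl(1 - u_t^2 + \tfrac{\sigma^2}{2}\, u_t\Bigr)\,dt \;+\; \sigma\sqrt{1-u_t^2}\; d\widetilde{B}_t,
\]
for a suitable real Brownian motion $\widetilde{B}$. Since both the drift and diffusion coefficients depend only on $u_t$, this is a genuine one-dimensional diffusion.

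The only subtle point is the sign ambiguity when extracting $\sqrt{1-u_t^2}$ from $\dot{x}_t/y_t$: a priori $\dot{x}_t$ could have either sign. This is harmless for the subdiffusion claim, because what matters is the law of the martingale part, which is determined by its bracket; alternatively one observes that $(-\sigma\dot{x}_t/y_t)\,dB_t$ has the same distribution as $\sigma\sqrt{1-u_t^2}\,d\widetilde{B}_t$ with $\widetilde{B}$ a standard Brownian motion, by Dambis--Dubins--Schwarz. This closes the proof.
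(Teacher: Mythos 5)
Your proposal is correct and follows essentially the same route as the paper: apply It\^o's formula to $u_t=\dot y_t/y_t$, use the arc-length relation \eqref{eqn.metrich2} to eliminate $\dot x_t^2/y_t^2=1-u_t^2$ from both the drift and the bracket, and observe that the remaining systems close up. Your extra remark on the sign of $\dot x_t/y_t$ and the Dambis--Dubins--Schwarz/L\'evy identification of the martingale part is a correct (and slightly more careful) justification of the step the paper leaves implicit.
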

\begin{proof}
Starting from the system \eqref{eqn.h2}, a direct application of It\^o's formula gives
\[
\left \lbrace
\begin{array}{ll}
d y_t = u_t y_t dt, \\
\\
d u_t = \displaystyle{-(1-u_t^2) dt -\frac{\sigma^2}{2} u_t dt + \sigma \sqrt{1-u_t^2} d B_t,}
\end{array}
\right.
\]
hence the result for the first two subdiffusions. Moreover, from Equation \eqref{eqn.h2}, the evolution of $( y_t, \dot{x}_t, \dot{y}_t)_{t \geq 0}$ does not depend on $x_t$.
\end{proof}

\begin{remark}\label{rem.sym}
Note that, by symmetry if $(y_t, \dot{x}_t, \dot{y}_t)$ is a version of the $3-$dimensional subdiffusion starting from $(y_0, \dot{x}_0=0, \dot{y}_0=\pm 1)$, then $(y_t, -\dot{x}_t, \dot{y}_t)$ is also a version of the process.
\end{remark}

\subsection{Long time asymptotics}

From the ergodicity of the subdiffusion $(u_t)_{t \geq 0}$, one can then deduce the following almost sure long time asymptotic behavior for the whole process:

\begin{prop}
Let $(x_t, y_t, \dot{x}_t, \dot{y}_t)$ be the kinetic Brownian motion in $T^1 \mathbb H^2$ starting from $(x_0, y_0, \dot{x}_0, \dot{y}_0)$. Then the process $(u_t)_{t \geq 0}$ is ergodic in $[-1,1]$ with invariant probability measure 
\[
\mu(dx) = \displaystyle{ \frac{1}{Z} \frac{e^{-2x/\sigma^2}}{\sqrt{1-x^2}}},
\]
where $Z$ is a normalizing constant. Moreover, as $t$ goes to infinity, we have 
\[
 \lim_{t \to +\infty} \frac{1}{t} \log \left( \frac{y_t}{y_0} \right) = \int_{-1}^1 x \mu(dx) \in (-\infty, 0),
\]
in particular $y_t$ goes almost surely to zero exponentially fast and the process $x_t$ converges almost surely to a random variable $x_{\infty} \in \mathbb R$.
\end{prop}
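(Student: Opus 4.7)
The proof naturally splits along the four assertions of the proposition.

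First, I would verify that the subdiffusion $(u_t)_{t\ge 0}$ with $u_t=\dot y_t/y_t$ satisfies, by a direct It\^o computation from system \eqref{eqn.h2} together with the metric constraint \eqref{eqn.metrich2}, the one-dimensional stochastic differential equation
\[
du_t=-(1-u_t^2)\,dt-\frac{\sigma^2}{2}u_t\,dt+\sigma\sqrt{1-u_t^2}\,dB_t
\]
on the interval $[-1,1]$. Its generator $L=\tfrac{\sigma^2}{2}(1-u^2)\partial_u^2+\big(-(1-u^2)-\tfrac{\sigma^2}{2}u\big)\partial_u$ has a one-dimensional structure, so the invariant density $p$ is characterized, up to normalization, by $p(u)\propto a(u)^{-1}\exp\!\big(2\int^u b/a\big)$ with $a(u)=\sigma^2(1-u^2)$ and $b(u)=-(1-u^2)-\tfrac{\sigma^2}{2}u$. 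A two-line primitive computation gives $p(u)\propto e^{-2u/\sigma^2}/\sqrt{1-u^2}$, identifying $\mu$. Integrability of $\mu$ near $\pm 1$, together with the fact that the drift points strictly inward at each boundary (drift $-\sigma^2/2$ at $u=1$, drift $+\sigma^2/2$ at $u=-1$), allows a standard Feller boundary classification showing both boundaries are entrance. Hence $(u_t)$ is positive Harris recurrent on $(-1,1)$ and ergodic with invariant probability $\mu$.

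Next, since $dy_t=u_t y_t\,dt$, one has the exact identity
\[
\frac{1}{t}\log\!\frac{y_t}{y_0}=\frac{1}{t}\int_0^t u_s\,ds.
\]
The ergodic theorem applied to the ergodic diffusion $(u_t)$ then yields a.s.\ convergence of the right-hand side to $\int_{-1}^1 x\,\mu(dx)$. To show this limit is strictly negative, I would use the elementary symmetry argument: writing $\varphi(x):=1/\sqrt{1-x^2}$, the density of $\mu$ reads $Z^{-1}\varphi(x)e^{-2x/\sigma^2}$, and $\varphi$ is even while $e^{-2x/\sigma^2}>e^{2x/\sigma^2}$ for $x>0$; splitting $\int_{-1}^1 x\,\mu(dx)$ into $\int_0^1 x\,[e^{-2x/\sigma^2}-e^{2x/\sigma^2}]\varphi(x)\,dx/Z$ exhibits it as the integral of a strictly negative function. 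Finiteness (i.e.\ that the limit is not $-\infty$) is immediate from $|u|\le 1$, which gives $\int_{-1}^1 x\,\mu(dx)\in[-1,0)$. This proves the exponential decay of $y_t$.

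Finally, for the convergence of $x_t$, I would exploit the metric relation \eqref{eqn.metrich2}, which gives $|\dot x_t|\le y_t$ pointwise. Combining with the exponential decay just established, for any $0<\varepsilon<|\int x\mu(dx)|$ one has $y_t\le y_0\,e^{-(|\int x\mu(dx)|-\varepsilon)t}$ for $t$ large enough a.s., so $\int_0^{\infty}|\dot x_s|\,ds<\infty$ almost surely, and
\[
x_\infty:=x_0+\int_0^{\infty}\dot x_s\,ds
\]
is a well-defined, finite random variable towards which $x_t$ converges almost surely.

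The main conceptual steps, verifying the explicit invariant density and the ergodic theorem, are essentially routine one-dimensional diffusion theory; the only point requiring a moment of care is the boundary analysis at $u=\pm 1$, which needs to rule out the possibility that $u_t$ sticks at one of the endpoints (where the diffusion coefficient degenerates). Since this is handled cleanly by Feller's test using the integrability of $\mu$ at the boundary and the inward-pointing drift, no real obstacle is expected.
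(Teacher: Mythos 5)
Your overall strategy coincides with the paper's: derive the one--dimensional SDE for $u_t=\dot y_t/y_t$, identify $\mu$ as the invariant law, apply the ergodic theorem to $\frac{1}{t}\log(y_t/y_0)=\frac{1}{t}\int_0^t u_s\,ds$, and then use $|\dot x_t|\le y_t$ from \eqref{eqn.metrich2} together with the exponential decay of $y_t$ to get $\int_0^\infty|\dot x_s|\,ds<\infty$ and hence the convergence of $x_t$. Your computation of the speed density $e^{-2u/\sigma^2}/\sqrt{1-u^2}$ is correct, and the symmetry argument for the strict negativity of $\int x\,\mu(dx)$ is a nice explicit supplement to the paper's terse ``one easily checks''.

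There is, however, one incorrect step: the Feller classification of the endpoints. For the diffusion $du=\big(-(1-u^2)-\tfrac{\sigma^2}{2}u\big)dt+\sigma\sqrt{1-u^2}\,dB$ the scale density is $s'(u)=e^{2u/\sigma^2}(1-u^2)^{-1/2}$ and the speed density is $m(u)=\sigma^{-2}e^{-2u/\sigma^2}(1-u^2)^{-1/2}$; both behave like $(1\mp u)^{-1/2}$ near $u=\pm1$ and are therefore integrable up to the boundary. Feller's test then classifies $\pm1$ as \emph{regular} boundary points, not entrance: the process does hit $\pm1$ in finite time almost surely (near $u=1$ the process $1-u_t$ is comparable to a squared Bessel process of dimension $1<2$, which reaches zero). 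This is precisely what the paper records in Remark \ref{rem.d2} for the two--dimensional case, in contrast with Proposition \ref{pro.lifetime.radial} for $d\ge 3$. The conclusion you need still holds --- the unique strong solution (pathwise uniqueness via the $1/2$-H\"older diffusion coefficient) is instantaneously reflected at $\pm1$, spends Lebesgue-null time there, and since the speed measure is finite the process is positive recurrent with stationary law the normalized speed measure $\mu$ --- but the justification must go through the regular, instantaneously reflecting classification; as written, the claim that the boundaries are entrance is false and the argument resting on it would fail.
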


\begin{figure}[ht]
\begin{center}\includegraphics[scale=0.55]{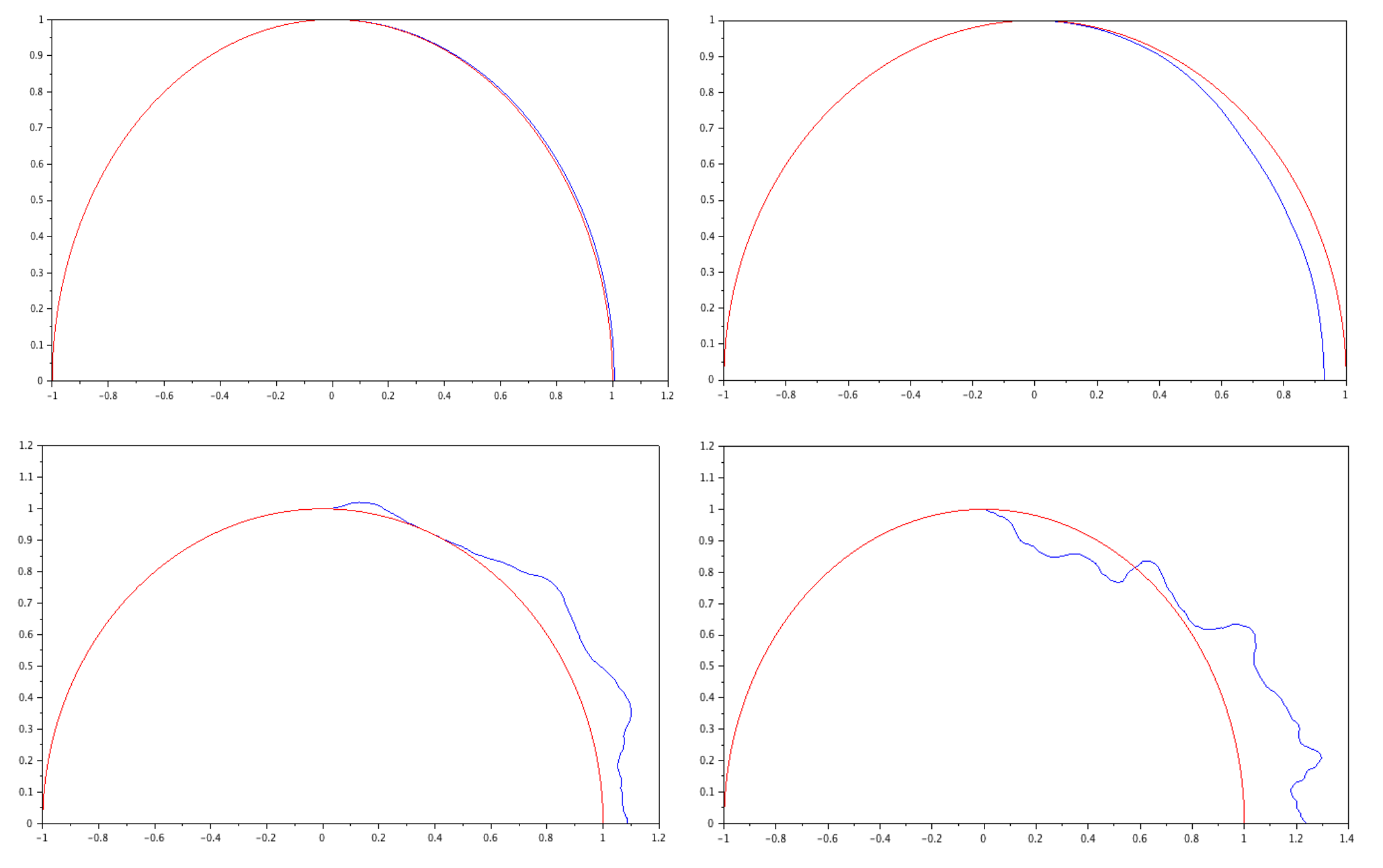}\end{center}
\caption{Simulation of the process and the corresponding geodesic for $\sigma=0.01, 0.1, 1, 2$.}
\end{figure}

\begin{proof}
One easily checks that $\mu$ is an invariant probability measure for $(u_t)_{t \geq 0}$ which is thus ergodic. The ergodic theorem then ensures that 
almost surely
\[
\frac{1}{t} \log \left( \frac{y_t}{y_0} \right) = \frac{1}{t} \int_0^t u_s ds \xrightarrow{t \to +\infty} \int_{-1}^1 x \mu(dx) \in (-\infty, 0).
\]
In particular, almost surely, $y_t$ goes to zero exponentially fast as $t$ goes to infinity. Moreover, from the relation \eqref{eqn.metrich2}, we have $\dot{x}_t^2 \leq y_t^2$ from which we deduce that 
\[
x_t = x_0 =\int_0^t \dot{x}_s ds
\]
is also almost surely convergent, hence the result.
\end{proof}

\subsection{Poisson boundary}
We conclude the study of the kinetic Brownian motion in the hyperbolic plane by expliciting its Poisson boundary. We show that the invariant sigma field of the full process coincides almost surely the sigma field generated by the point $(x_{\infty},0) \in \partial \mathbb H^2$ the boundary at infinity.

\begin{prop}
Let $(x_t, y_t, \dot{x}_t, \dot{y}_t)$ be the kinetic Brownian motion in $T^1 \mathbb H^2$ starting from $(x_0, y_0, \dot{x}_0, \dot{y}_0)$. Then the Poisson boundary of the process coincides almost surely with $\sigma(x_{\infty})$.
\end{prop}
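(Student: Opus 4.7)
The approach mirrors the proof of the higher-dimensional case in Section \ref{sec.poisson}: I apply the dévissage method of Theorem \ref{theo.devA} to the decomposition of the full kinetic Brownian motion into the subdiffusion $(y_t,\dot x_t,\dot y_t)_{t\ge 0}$ and the horizontal coordinate $x_t$. Take $\mathcal N$ to be the state space of the subdiffusion and $G=\mathbb R$ acting on $x$ by translation. Since horizontal translation $x\mapsto x+a$ is an isometry of $\mathbb H^2$ in the half-plane model, the infinitesimal generator of the full process commutes with the $\mathbb R$-action, giving the equivariance hypothesis of Theorem \ref{theo.devA}. The convergence hypothesis is immediate from the preceding proposition, which shows that $x_t\to x_\infty$ almost surely. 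The regularity hypothesis follows from Hörmander's theorem: one checks directly from \eqref{eqn.h2} that the vector fields $H_1$ and $V_2$ generating the process satisfy the Lie bracket condition on $T^1\mathbb H^2$, so the generator is hypoelliptic and all its bounded harmonic functions are continuous.

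The main remaining task is to verify the dévissage hypothesis, namely that the subdiffusion $(y_t,\dot x_t,\dot y_t)_{t\ge 0}$ has trivial Poisson boundary. A convenient first reduction: the metric relation \eqref{eqn.metrich2} combined with the fact that $|u_t|<1$ almost surely for all $t>0$ (which follows from the degeneracy of the diffusion coefficient of $u_t$ at $\pm 1$) gives $\dot x_t=\mathrm{sign}(\dot x_0)\,y_t\sqrt{1-u_t^2}$ with a constant sign. Hence on each of the two connected components of the subdiffusion's state space the full process $(y_t,\dot x_t,\dot y_t)$ is a deterministic function of the two-dimensional process $(y_t,u_t)_{t\ge 0}$, and it is enough to show that the latter has trivial Poisson boundary.

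After the change of variables $\tilde r_t:=-\log y_t$, $\dot{\tilde r}_t:=-u_t$, the process $(\tilde r_t,\dot{\tilde r}_t)$ is transient (the preceding proposition yields $y_t\to 0$ almost surely, so $\tilde r_t\to+\infty$) and its second coordinate is Harris recurrent in $(-1,1)$ by the ergodicity of $u_t$. This puts us exactly in the setting of Proposition \ref{pro.poissontrivradial}: the ``grey zone'' shift coupling illustrated in Figure \ref{fig.poisson2} only uses these two properties together with the monotonicity of $\tilde r_t$ in the sign of $\dot{\tilde r}_t$, and so carries over verbatim. Combining everything, Theorem \ref{theo.devA} yields
$$
\textrm{Inv}\big((x_t,y_t,\dot x_t,\dot y_t)_{t\ge 0}\big)=\textrm{Inv}\big((y_t,\dot x_t,\dot y_t)_{t\ge 0}\big)\vee\sigma(x_\infty)=\sigma(x_\infty),
$$
as claimed. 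The single genuinely non-trivial step is the verification of the shift coupling for the reduced subdiffusion $(y_t,u_t)$, but since only the qualitative features of transience, Harris recurrence and sign-monotonicity are used, this is a direct adaptation of the argument already spelled out in the rotationally invariant case.
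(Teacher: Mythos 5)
Your overall architecture is the paper's: dévissage with $G=\R$ acting by horizontal translation, equivariance from the isometry, convergence from $x_t\to x_\infty$, regularity from hypoellipticity, and the dévissage hypothesis reduced to a shift coupling for the subdiffusion. The gap is in your ``convenient first reduction''. The claim that $|u_t|<1$ for all $t>0$ almost surely is false in dimension $2$: near $u=1$ the process $v_t=1-u_t$ satisfies $dv_t\simeq \frac{\sigma^2}{2}\,dt-\sigma\sqrt{2v_t}\,dB_t$, a squared-Bessel-type equation of dimension $\tfrac12<2$, so $u_t$ reaches $\pm1$ in finite time almost surely (and these points are instantaneously reflecting). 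This is precisely the content of Remark \ref{rem.d2} and the entire reason the case $d=2$ requires a separate treatment. Consequently $\dot x_t=\pm\, y_t\sqrt{1-u_t^2}$ vanishes at infinitely many times and its sign is \emph{not} constant, so $(y_t,\dot x_t,\dot y_t)$ is not a deterministic function of $(y_t,u_t)$ and your reduction of the three-dimensional subdiffusion to the two-dimensional one does not go through as written.

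What survives of your argument is the shift coupling for $(-\log y_t,u_t)$ via Proposition \ref{pro.poissontrivradial}; the missing step is to upgrade it to a shift coupling of the full subdiffusion $(y_t,\dot x_t,\dot y_t)$. The paper does this as follows: once $(y^1,\dot y^1)$ and $(y^2,\dot y^2)$ are coupled at times $T_1,T_2$, relation \eqref{eqn.metrich2} forces $|\dot x^1_{T_1}|=|\dot x^2_{T_2}|$; if the signs disagree, one invokes the reflection symmetry $(y,\dot x,\dot y)\mapsto(y,-\dot x,\dot y)$ of Remark \ref{rem.sym}, applied from a time $S$ at which $u^1_S=\pm1$ (hence $\dot x^1_S=0$) -- such a time exists in finite time precisely \emph{because} $u$ does hit the endpoints. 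So the very fact you tried to rule out is what makes the correct argument work. Without this step the triviality of the Poisson boundary of $(y_t,\dot x_t,\dot y_t)$, i.e. the dévissage hypothesis of Theorem \ref{theo.devA}, is not established.
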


\begin{proof}
The proof is again an application of the d\'evissage method. Since $(u_t)_{t \geq 0}$ is ergodic in $[-1,1]$, its invariant sigma field is clearly trivial. 
Otherwise, the invariant sigma field of the process $(y_t, \dot{y}_t)_{t \geq 0}$ coincides almost surely with the one of the process $(-\log(y_t),u_t)_{t \geq 0}$, whose behavior is stricly similar to the one of the process $(r_t, \dot{r}_t)_{t \geq 0}$ in the proof of Proposition \ref{pro.poissontrivradial}, namely $-\log(y_t)$ goes almost surely to infinity with $t$ while its derivative $u_t$ is ergodic. The exact same proof then shows that the invariant sigma field of $(-\log(y_t),u_t)_{t \geq 0}$ and thus $(y_t, \dot{y}_t)_{t \geq 0}$ is trivial. Equivalently, if $(y_t^1, \dot{y}_t^1)_{t \geq 0}$ and $(y_t^2, \dot{y}_t^2)_{t \geq 0}$ are two versions of the process starting from $(y_0^1, \dot{y}_0^1)$ and $(y_0^2, \dot{y}_0^2)$ respectively, there exists two random times $T_1$ and $T_2$ that are finite almost surely and such that $(y_{T_1+t}^1, \dot{y}_{T_1 +t}^1)_{t \geq 0}=(y_{T_2+t}^2, \dot{y}_{T_2+t}^2)_{t \geq 0}$. 
Without loss of generality, we can suppose that $T_1 \geq S$ where $S$ is the first hitting point of $\pm 1$ by $u^1_{t} = \dot{y}_{t}^1 /y_{t}^1$, which is also finite almost surely since $u_1$ ergodic in $[-1,1]$. 
Now, the relation \eqref{eqn.metrich2} implies that a shift coupling between $(y_{t}^1, \dot{y}_{t}^1)$ and  $(y_{t}^2, \dot{y}_{t}^2)$ yields automatically a shift coupling between the corresponding versions of the process $(y_t, \dot{x}_t, \dot{y}_t,)_{t \geq 0}$. Indeed, Equation \eqref{eqn.metrich2} then ensures that $|\dot{x}_{T_1}^1|^2 = |\dot{x}_{T_2}^2|^2$ almost surely. If $\dot{x}_{T_1}^1 = \dot{x}_{T_2}^2$, there is nothing to prove, if $\dot{x}_{T_1}^1 = -\dot{x}_{T_2}^2$, we can use the symmetry argument of Remark \ref{rem.sym} and consider the process $(-\dot{x}^1_{S+t})_{t \geq 0}$ starting at $-\dot{x}^1_{S}=0$. In other words, the invariant sigma field of the subdiffusion $(y_t, \dot{y}_t,\dot{x}_t)_{t \geq 0}$ is almost surely trivial. To apply the devissage scheme, we are left to check the equivariance and regularity properties of Theorem \ref{theo.devA}.
We have $x_t=x_0 +\int_0^t \dot{x}_s ds$ so that the infinitesimal generator of the full diffusion $(x_t, y_t, \dot{x}_t, \dot{y}_t)$ is clearly equivariant under the action of $\mathbb R$ by translation on the $x$ component. Moreover, the generator is hypoelliptic so that harmonic function are continuous. 
By Theorem \ref{theo.devA}, we can conclude that the invariant sigma field of the kinetic Brownian motion $(x_t, y_t, \dot{x}_t, \dot{y}_t)$ coincides almost surely with the sigma field generated by the limit $x_{\infty}$.
\end{proof}

\if{
\newpage
\section{Evolution of the normalized angular derivative}\label{app.B}
We give in this Appendix the details of the computations that are needed to give a complete proof of proposition 3.3.2.
Starting from Equation \eqref{eqn.system.rot}, we have
\begin{equation}\label{eqn.ap1}
d f^2(r_t) \dot{\theta}^i_t = - \frac{(d-1)\sigma^2}{2} f^2(r_t) \dot{\theta}^i_t dt + \sigma f^2(r_t) d M^{ \dot{\theta}^i}_t.
\end{equation}
Taking the square, we get 
\[
d f^4(r_t) |\dot{\theta}^i_t |^2= - d \sigma^2 f^4(r_t) |\dot{\theta}^i_t|^2 dt + \sigma^2 f^2(r_t) dt + 2 \sigma f^4(r_t) \dot{\theta}^i_t d M^{ \dot{\theta}^i}_t,
\]
and making the sum over $1 \leq i \leq d$:  
\[
d f^4(r_t) |\dot{\theta}_t |^2= - d \sigma^2 f^4(r_t) |\dot{\theta}_t|^2 dt + d \sigma^2 f^2(r_t) dt + 2 \sigma f^4(r_t) \sum_{i=1}^d \dot{\theta}^i_t d M^{ \dot{\theta}^i}_t.
\]
Taking the square root, one then deduce that 
\begin{equation}\label{eqn.ap2}
d  f^2(r_t) |\dot{\theta}_t | =  - \frac{(d-1)\sigma^2}{2} f^2(r_t) |\dot{\theta}_t| dt + \frac{(d-1)\sigma^2}{2|\dot{\theta}_t|} dt +  \sigma f^2(r_t) \sum_{i=1}^d \frac{\dot{\theta}^i_t}{|\dot{\theta}_t|} d M^{ \dot{\theta}^i}_t.
\end{equation}
From Equations \eqref{eqn.ap1} and \eqref{eqn.ap2}, taking the quotient, and using the rule 
\[
 d \frac{X_t}{Y_t} = \frac{1}{Y_t} d X_t  -\frac{X_t}{Y_t^2} d Y_t + \frac{X_t}{Y_t^3} d \langle Y_t\rangle - \frac{1}{Y_t^2} d \langle X_t, Y_t \rangle
\]
we have thus
\begin{equation}\label{eqn.ap3}
\begin{array}{ll}
d \displaystyle{\frac{\dot{\theta}^i_t}{|\dot{\theta}_t|} }=  &  \displaystyle{\frac{1}{f^2(r_t) |\dot{\theta}_t |} \left[ - \frac{(d-1)\sigma^2}{2} f^2(r_t) \dot{\theta}^i_t dt + \sigma f^2(r_t) d M^{ \dot{\theta}^i}_t \right]} \\
\\
&  \displaystyle{- \frac{f^2(r_t) \dot{\theta}_t^i }{f^4(r_t) |\dot{\theta}_t |^2} \left[- \frac{(d-1)\sigma^2}{2} f^2(r_t) |\dot{\theta}_t| dt + \frac{(d-1)\sigma^2}{2|\dot{\theta}_t|} dt +  \sigma f^2(r_t) \sum_{j=1}^d \frac{\dot{\theta}^j_t}{|\dot{\theta}_t|} d M^{ \dot{\theta}^j}_t \right]} \\
\\
&  \displaystyle{+\frac{f^2(r_t) \dot{\theta}_t^i }{f^6(r_t) |\dot{\theta}_t |^3} \left[ \sigma^2 f^4(r_t) \left \langle \sum_{j=1}^d \frac{\dot{\theta}^j_t}{|\dot{\theta}_t|} d M^{ \dot{\theta}^j}_t   \right \rangle  \right]  } \\
\\
&  \displaystyle{ - \frac{\sigma^2}{f^4(r_t) |\dot{\theta}_t |^2} \left[ \left \langle f^2(r_t) d M^{ \dot{\theta}^i}_t,  f^2(r_t) \sum_{j=1}^d \frac{\dot{\theta}^j_t}{|\dot{\theta}_t|} d M^{ \dot{\theta}^j}_t   \right \rangle \right].}
\end{array}
\end{equation}
Otherwise, we have 
\[
\begin{array}{ll}
\displaystyle{
\left \langle \sum_{j=1}^d \frac{\dot{\theta}^j_t}{|\dot{\theta}_t|} d M^{ \dot{\theta}^j}_t   \right \rangle } & = \displaystyle{\sum_{j, k=1}^d \frac{\dot{\theta}^j_t}{|\dot{\theta}_t|} \frac{\dot{\theta}^k_t}{|\dot{\theta}_t|}  \left \langle d M^{ \dot{\theta}^j}_t, M^{ \dot{\theta}^k}_t \right \rangle} \\
\\
& =\displaystyle{\sum_{j, k=1}^d \frac{\dot{\theta}^j_t}{|\dot{\theta}_t|} \frac{\dot{\theta}^k_t}{|\dot{\theta}_t|} \left(  \frac{\delta_{j k}}{f^2(r_t)} - \dot{\theta}^j_t \dot{\theta}^k_t \right) } \\
\\
& = \displaystyle{ \frac{1}{f^2(r_t)} - |\dot{\theta}_t|^2},
\end{array}
\]
and 
\[
\begin{array}{ll}
 \displaystyle{
\left \langle d M^{ \dot{\theta}^i}_t, \sum_{j=1}^d \frac{\dot{\theta}^j_t}{|\dot{\theta}_t|} d M^{ \dot{\theta}^j}_t   \right \rangle } & = \displaystyle{\sum_{j=1}^d \frac{\dot{\theta}^j_t}{|\dot{\theta}_t|} \left \langle d M^{ \dot{\theta}^i}_t, M^{ \dot{\theta}^j}_t \right \rangle} \\
\\
& =\displaystyle{\sum_{j=1}^d \frac{\dot{\theta}^j_t}{|\dot{\theta}_t|} \left(  \frac{\delta_{ij}}{f^2(r_t)} - \dot{\theta}^i_t \dot{\theta}^j_t \right) } \\
\\
& = \displaystyle{ \frac{1}{f^2(r_t)} \times \frac{\dot{\theta}^i_t}{|\dot{\theta}_t|}-\frac{\dot{\theta}^i_t}{|\dot{\theta}_t|} \times  |\dot{\theta}_t|^2}.
\end{array}
\]
Injecting the above expressions of the brackets in Equation \eqref{eqn.ap3} yields
\[
\begin{array}{ll}
d \displaystyle{\frac{\dot{\theta}^i_t}{|\dot{\theta}_t|} }=  &  \displaystyle{\frac{1}{f^2(r_t) |\dot{\theta}_t |} \left[\textcolor{green}{ - \frac{(d-1)\sigma^2}{2} f^2(r_t) \dot{\theta}^i_t dt }+ \sigma f^2(r_t) d M^{ \dot{\theta}^i}_t \right]} \\
\\
&  \displaystyle{- \frac{f^2(r_t) \dot{\theta}_t^i }{f^4(r_t) |\dot{\theta}_t |^2} \left[\textcolor{green}{- \frac{(d-1)\sigma^2}{2} f^2(r_t) |\dot{\theta}_t| dt} + \frac{(d-1)\sigma^2}{2|\dot{\theta}_t|} dt +  \sigma f^2(r_t) \sum_{j=1}^d \frac{\dot{\theta}^j_t}{|\dot{\theta}_t|} d M^{ \dot{\theta}^j}_t \right]} \\
\\
&  \displaystyle{+\frac{f^2(r_t) \dot{\theta}_t^i }{f^6(r_t) |\dot{\theta}_t |^3} \left[ \sigma^2 f^4(r_t) \left( \textcolor{blue}{\frac{1}{f^2(r_t)} }- \textcolor{red}{ |\dot{\theta}_t|^2 }\right)\right]  } \\
\\
&  \displaystyle{ - \frac{\sigma^2}{f^4(r_t) |\dot{\theta}_t |^2} \times f^4(r_t) \times \left[ \textcolor{blue}{\frac{1}{f^2(r_t)} \times \frac{\dot{\theta}^i_t}{|\dot{\theta}_t|}}-\textcolor{red}{\frac{\dot{\theta}^i_t}{|\dot{\theta}_t|} \times  |\dot{\theta}_t|^2} \right].}
\end{array}
\]
The expressions with the same color cancel so that
\[
d \frac{\dot{\theta}_t^i}{|\dot{\theta}_t|} = - \sigma^2 \frac{d-1}{2} \frac{\dot{\theta}_t^i}{|\dot{\theta}_t|} \frac{dt}{f^2(r_t)|\dot{\theta}_t|^2 } + \sigma d N^i_t,
\]
with 
\[
d N^i_t := \frac{1}{|\dot{\theta}_t|} \left(  d M^{\dot{\theta}^i}_t  - \frac{\dot{\theta}_t^i}{|\dot{\theta}_t|}  \sum_{j=1}^d \frac{\dot{\theta}_t^j}{|\dot{\theta}_t|} d M^{\dot{\theta}^j}_t\right).
\]
Now, let us compute the bracket $\langle N^i_t, N^j_t\rangle$:
\[
\begin{array}{ll}
d \left \langle 
N^i_t, N^j_t
\right \rangle  & = \displaystyle{\frac{1}{|\dot{\theta}_t|^2} \left \langle d M^{\dot{\theta}^i}_t  - \frac{\dot{\theta}_t^i}{|\dot{\theta}_t|}  \sum_{k=1}^d \frac{\dot{\theta}_t^k}{|\dot{\theta}_t|} d M^{\dot{\theta}^k}_t, d M^{\dot{\theta}^j}_t  - \frac{\dot{\theta}_t^j}{|\dot{\theta}_t|}  \sum_{\ell=1}^d \frac{\dot{\theta}_t^{\ell}}{|\dot{\theta}_t|} d M^{\dot{\theta}^{\ell}}_t\right \rangle}, 
\end{array}
\]
i.e.
\[
\begin{array}{ll}
d \left \langle 
N^i_t, N^j_t
\right \rangle  
& = \displaystyle{\frac{1}{|\dot{\theta}_t|^2} \left[ \left(\frac{\delta_{ij}}{f^2(r_t)} - \dot{\theta}^i_t \dot{\theta}^j_t \right) - \frac{\dot{\theta}_t^j}{|\dot{\theta}_t|} \left \langle d M^{\dot{\theta}^i}_t , \sum_{k=1}^d \frac{\dot{\theta}_t^k}{|\dot{\theta}_t|} d M^{\dot{\theta}^k}_t \right \rangle 
\right]} \\
& \quad  \displaystyle{+ \frac{1}{|\dot{\theta}_t|^2} \left[ -  \frac{\dot{\theta}_t^i}{|\dot{\theta}_t|} \left \langle d M^{\dot{\theta}^j}_t , \sum_{\ell=1}^d \frac{\dot{\theta}_t^{\ell}}{|\dot{\theta}_t|} d M^{\dot{\theta}^{\ell}}_t \right \rangle+  \frac{\dot{\theta}_t^i}{|\dot{\theta}_t|}  \frac{\dot{\theta}_t^j}{|\dot{\theta}_t|} \left \langle \sum_{j=1}^d \frac{\dot{\theta}^j_t}{|\dot{\theta}_t|} d M^{ \dot{\theta}^j}_t   \right \rangle \right]} \\
\\
& = \displaystyle{\frac{1}{|\dot{\theta}_t|^2} \left[ \left(\frac{\delta_{ij}}{f^2(r_t)} - \dot{\theta}^i_t \dot{\theta}^j_t \right) - \frac{\dot{\theta}_t^j}{|\dot{\theta}_t|}  \sum_k \frac{\dot{\theta}_t^k}{|\dot{\theta}_t|} \left(\frac{\delta_{ik}}{f^2(r_t)} - \dot{\theta}^i_t \dot{\theta}^k_t \right) \right]} \\
& \quad + \displaystyle{\frac{1}{|\dot{\theta}_t|^2} \left[- \frac{\dot{\theta}_t^i}{|\dot{\theta}_t|}  \sum_{\ell} \frac{\dot{\theta}_t^{\ell}}{|\dot{\theta}_t|} \left(\frac{\delta_{\ell j}}{f^2(r_t)} - \dot{\theta}^j_t \dot{\theta}^{\ell}_t \right)+ \frac{\dot{\theta}_t^i}{|\dot{\theta}_t|} \frac{\dot{\theta}_t^j}{|\dot{\theta}_t|}  \left(\frac{1}{f^2(r_t)} - |\dot{\theta}_t|^2 \right) \right]}\\
\\
& = \displaystyle{\frac{1}{|\dot{\theta}_t|^2} \left[ \left( \frac{\delta_{ij}}{f^2(r_t)} - \dot{\theta}^i_t \dot{\theta}^j_t\right)  - \frac{\dot{\theta}_t^i}{|\dot{\theta}_t|} \frac{\dot{\theta}_t^j}{|\dot{\theta}_t|} \times \frac{1}{f^2(r_t)} +\dot{\theta}^i_t \dot{\theta}^j_t \right]}\\
\\
& \quad + \displaystyle{\frac{1}{|\dot{\theta}_t|^2} \left[ - \frac{\dot{\theta}_t^i}{|\dot{\theta}_t|} \frac{\dot{\theta}_t^j}{|\dot{\theta}_t|} \times \frac{1}{f^2(r_t)} +\dot{\theta}^i_t \dot{\theta}^j_t +\frac{\dot{\theta}_t^i}{|\dot{\theta}_t|} \frac{\dot{\theta}_t^j}{|\dot{\theta}_t|} \left(\frac{1}{f^2(r_t)} - |\dot{\theta}_t|^2 \right)  \right]}\\
\\
& = \displaystyle{\frac{1}{|\dot{\theta}_t|^2} \left[\frac{\delta_{ij}}{f^2(r_t)}  - \frac{\dot{\theta}_t^i}{|\dot{\theta}_t|} \frac{\dot{\theta}_t^j}{|\dot{\theta}_t|} \times \frac{1}{f^2(r_t)} \right]} \\
\\
& = \displaystyle{\frac{1}{f^2(r_t)|\dot{\theta}_t|^2} \left[\delta_{ij}  - \frac{\dot{\theta}_t^i}{|\dot{\theta}_t|} \frac{\dot{\theta}_t^j}{|\dot{\theta}_t|}\right].}
\end{array}
\]
Hence, the evolution of $\frac{\dot{\theta}_t}{|\dot{\theta}_t|} $ is indeed described by Equations \eqref{eqn.angular} and \eqref{eqn.angularcov}, i.e. the process a time-changed spherical Brownian motion parametrized by the clock $\sigma^2 C_t$
where
\[
C_t = \int_0^t \frac{ds}{f^2(r_s)|\dot{\theta}_s|^2} = \int_0^t \frac{ds}{1-\dot{r}_s^2}.
\]
Moreover, for $1 \leq i \leq d$, we have 
\[
\begin{array}{ll}
\left \langle d N^i_t, d M^{\dot{r}}_t \right \rangle  & = \displaystyle{\frac{1}{|\dot{\theta}_t|} \left[  \left \langle d M^{\dot{r}}_t , d M^{\dot{\theta^i}}_t \right \rangle - \frac{\dot{\theta}_t^i}{|\dot{\theta}_t|} \sum_{k=1}^d   \frac{\dot{\theta}_t^k}{|\dot{\theta}_t|} \left \langle d M^{\dot{r}}_t , d M^{\dot{\theta^k}}_t \right \rangle \right]} \\
\\
& = \displaystyle{\frac{1}{|\dot{\theta}_t|} \left[ -\dot{r}_t \dot{\theta}^i_t - \frac{\dot{\theta}_t^i}{|\dot{\theta}_t|} \sum_{k=1}^d   \frac{\dot{\theta}_t^k}{|\dot{\theta}_t|} ( -\dot{r}_t \dot{\theta}^k_t)\right]} \\
\\
& = \displaystyle{\frac{1}{|\dot{\theta}_t|} \left[ -\dot{r}_t \dot{\theta}^i_t +\dot{r}_t \dot{\theta}^i_t \right]=0.} 
\end{array}
\]
Thus, the above spherical Brownian motion describing the evolution of $\frac{\dot{\theta}_t}{|\dot{\theta}_t|} $ is independant of the radial process, in particular, it is independant of the clock $C_t$.

}\fi

\bibliographystyle{alpha}

\end{document}